\newtheorem{thm}{Theorem}[section]
\newtheorem{cor}[thm]{Corollary}
\newtheorem{lem}[thm]{Lemma}
\newtheorem{prop}[thm]{Proposition}
\theoremstyle{definition}
\theoremstyle{remark}
\newtheorem{rem}[thm]{Remark}
\numberwithin{equation}{section}
\title{On the  relations between the zeros of a polynomial and its Mahler measure}
\author{M. Ounaies, G. Rhin
and J.-M. Sac-\'Ep\'ee\footnote{M. Ounaies, IRMA, France, myriam.ounaies@math.unistra.fr\newline
G. Rhin,
 J.-M. Sac-\'Ep\'ee, IECL,
 France, georges.rhin@univ-lorraine.fr, jean-marc.sac-epee@univ-lorraine.fr}}
\begin{document}

\maketitle
\newcommand{\D}{\mathbb{D}}
\newcommand{\C}{\mathbb{C}}
\newcommand{\N}{\mathbb{N}}
\newcommand{\R}{\mathbb{R}}
\newcommand{\Z}{\mathbb{Z}}
\newcommand{\dist}{\operatorname{dist}}

\renewcommand{\qedsymbol}{$\blacksquare$}

\makeatletter
\def\blfootnote{\xdef\@thefnmark{}\@footnotetext}
\makeatother

\begin{abstract}
In this work, we are dealing with some properties relating the zeros of a polynomial and its Mahler measure. We provide estimates on  the number of real zeros of a polynomial, lower bounds on the distance between the zeros of a polynomial and non-zeros  located on the unit circle  and a lower bound  on the number of zeros of a polynomial  in the disk $\{\vert x-1\vert<1\}$.   \end{abstract}

 \blfootnote{2000 Mathematics Subject Classification : 11R06, 11C08, 12D10}

\section{Introduction} 
 For a polynomial $P\in \C[x]$ of the form
 \[P(x)=\displaystyle\sum_{j=0}^d a_j x^j= \displaystyle a_d \prod_{k=1}^d (x-\mu_k), a_d\not=0,\] 
its Mahler measure \cite{Mahler 62}  is given by 

 \[M(P) = \exp\Big(\displaystyle\frac{1}{2\pi}\displaystyle\int_{0}^{2\pi} \log |P(e^{i t})| dt\Big).\]
  Using the Jensen's formula \cite{Jensen}, it  can be expressed as 
  \[M(P) = |a_d|\displaystyle\prod_{|\mu_k|\geq 1}|\mu_k|.\]

 \noindent For an algebraic number $\alpha$, its Mahler measure $M(\alpha)$ is that of its minimal polynomial in $\Z[x]$. We refer the reader to \cite{Smyth 08} for a survey of results on Mahler measure of algebraic numbers. 
  
 \noindent When $P\in \Z[x]$, clearly $M(P)\ge 1$ and by Kronecker's theorem \cite{Kronecker 57}, the only irreducible monic integer polynomials $P$ such that  $M(P)=1$, apart from the monomial $x$,  are cyclotomic polynomials. In other words, the only algebraic numbers $\alpha$ such that $M(\alpha)=1$ are $0$ and the roots of unity.

In  a paper from 1933 \cite{Lehmer 33}, Lehmer exhibited the polynomial  $P_L(x) = x^{10}+x^9-x^7-x^6-x^5-x^4-x^3+x+1$ whose Mahler measure $M(P_L)=1.176280\dots$ and raised the question whether there exists $P\in \Z[x]$ such that $1<M(P)<M(P_L)$.

\noindent This question is still open and the existence of a constant $\epsilon>0$ such that all $P\in \Z[x]$ satisfy either  $M(P)=1$ or $M(P)>1+\epsilon$ is now known as Lehmer's conjecture. 

For irreducible noncyclotomic polynomials $P\in \Z[x]$ of degree $d$,  E. Dobrowolski  \cite{Dobrowolski 79} showed a lower bound of the form   \[M(P)> 1+C \left(\frac{\log \log d}{\log d}\right)^3,\]
and it is the best unconditional lower bound for $M(P)$ known thus far, up to the constant $C$.

\noindent A polynomial $P$ of degree $d$ is said to be  self-reciprocal if $\displaystyle P(x)=x^dP\left(\frac{1}{x}\right)$. C. Smyth \cite{Smyth 71} proved an  important theorem in the  positive direction of  Lehmer's conjecture:
\begin{thm}\label{smyth}[Smyth]
If $P\in \Z[x]$ is not self-reciprocal and satisfies $P(0)P(1)\not=0$, then $M(P)\ge \theta_0$, where  $\theta_0= 1. 324717\dots$ is the real root of the polynomial $x^3-x-1$.
\end{thm}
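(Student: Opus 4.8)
\medskip
\noindent Here is how I would approach a proof.

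\emph{Reductions.} By Jensen's formula $M(P)\ge|a_d|$, so if $P$ is not monic then $M(P)\ge 2>\theta_0$; assume $P$ monic. Likewise $M(P)\ge|a_0|=|P(0)|$, so assume $P(0)=\pm1$. Let $P^{*}(x)=x^{\deg P}P(1/x)$ be the reciprocal of $P$. A common zero $\mu$ of $P$ and $P^{*}$ forces $1/\mu$ to be a zero of $P$, and from this one checks that $g:=\gcd(P,P^{*})\in\Z[x]$ (taken monic) is self-reciprocal, hence $M(g)\ge1$; moreover $P/g$ is monic, still satisfies $(P/g)(0)(P/g)(1)\ne0$, is still not self-reciprocal, and $M(P/g)\le M(P)$. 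So we may assume $\gcd(P,P^{*})=1$, which in turn forces $P$ to have no zero on $|x|=1$ (a unimodular zero of $P$ would, by the reality of the coefficients, be a zero of $P^{*}$ as well).

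\emph{The generating function.} Set $f(z)=P(z)/P^{*}(z)$. Since $P$ is monic, $P^{*}(0)=1$, so $f$ is analytic at $0$ with $f(z)=\sum_{n\ge0}b_nz^n$, $b_n\in\Z$, $b_0=P(0)=\pm1$. On $|z|=1$ we have $|P(z)|=|P^{*}(z)|$, so $|f|\equiv1$ there, equivalently $f(z)f(1/z)=1$ identically. Also $f$ is non-constant, since $f\equiv c$ would give $P=\pm P^{*}$, making $P$ self-reciprocal or forcing $P(1)=0$. By coprimality the poles of $f$ are exactly the zeros $1/\mu_1,\dots,1/\mu_{\deg P}$ of $P^{*}$; as $\prod_k|\mu_k|=|P(0)|=1$ and no $\mu_k$ lies on the unit circle, some $|\mu_k|>1$, and the radius of convergence of $\sum b_nz^n$ is $R=1/\max_k|\mu_k|<1$. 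Hence
\[
M(P)=\prod_{|\mu_k|\ge1}|\mu_k|\ \ge\ \max_k|\mu_k|\ =\ \frac1R,
\]
so it suffices to show $R\le1/\theta_0$, i.e. that $f$ has a pole in $|z|\le1/\theta_0=0.7549\ldots$; equivalently, that $P$ has a zero of modulus $\ge\theta_0$.

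\emph{The crux.} We are reduced to a statement about integer power series: a non-constant rational $f$ with $f\in\Z[[z]]$, $f(0)=\pm1$, and $|f|\equiv1$ on $|z|=1$ must have a pole of modulus $\le1/\theta_0$. It is sharp, the extremal case being $P(x)=x^3-x-1$ (monic, non-reciprocal, $P(0)P(1)\ne0$, $M(P)=\theta_0$), for which the nearest pole of $f=P/P^{*}$ sits exactly at $1/\theta_0$. I would prove it from the representation $f=P/P^{*}$, using only that $P$ is a monic integer polynomial with $P(0)=\pm1$, not self-reciprocal, without unimodular zeros, and that $\max_k|\mu_k|=\limsup_n|b_n|^{1/n}$: the idea is to bound this growth rate below by exploiting the linear recurrence with integer coefficients satisfied by $(b_n)$ (coming from $P^{*}(z)\sum b_nz^n=P(z)$), the normalization $b_0=\pm1$, and the location of the first nonzero $b_n$ with $n\ge1$. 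The case of an irreducible $P$ with a single zero outside the unit disk is precisely the classical fact that the smallest Pisot number is $\theta_0$, and the general argument should run along the same lines.

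\emph{Where the work is.} This last step is the genuine obstacle; everything before it is bookkeeping. The purely \emph{qualitative} consequence --- that $f$ has some pole in $|z|<1$, hence $M(P)>1$ --- is immediate, since if $f$ were analytic in $|z|<1$ then $\sum_n|b_n|^2r^{2n}=\frac1{2\pi}\int_0^{2\pi}|f(re^{i\theta})|^2\,d\theta\le1$ for all $r<1$, forcing all but one $b_n$ to vanish and $f\equiv\pm1$. Upgrading ``$>1$'' to the explicit threshold $\theta_0$ needs real input, and I expect it to come either from an auxiliary Blaschke-type factor multiplying $f$ to cancel its poles in the disk while keeping control of the now square-summable coefficients, or from an explicit analysis of the low-order coefficients $b_1,b_2,\dots$ encoding the three-term relation behind $x^3-x-1$.
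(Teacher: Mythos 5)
The paper does not prove this theorem; it is quoted from Smyth's 1971 article, so there is no in-paper argument to compare against. Judged on its own terms, your write-up is an honest sketch of the \emph{setup} of Smyth's proof (the function $f=P/P^{*}$ with integer Taylor coefficients and $|f|\equiv 1$ on the unit circle is exactly his starting point, and your reductions to $P$ monic, $P(0)=\pm1$, $\gcd(P,P^{*})=1$ are all standard and repairable), but it is not a proof: you state explicitly that the quantitative step is ``the genuine obstacle'' and you do not supply it. That step is the entire content of the theorem.

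Worse, the reduction you propose in its place is to a \emph{false} statement. From $M(P)\ge\max_k|\mu_k|=1/R$ you conclude that ``it suffices to show $R\le 1/\theta_0$, i.e.\ that $P$ has a zero of modulus $\ge\theta_0$.'' Take $P(x)=x^{3}+x^{2}-1$, which is monic, not self-reciprocal, with $P(0)P(1)=-1\ne0$. Its roots are $1/\theta_0$ together with a conjugate pair of modulus $\sqrt{\theta_0}\approx 1.1509$, so $M(P)=\theta_0$ (the theorem holds, with equality), yet every root has modulus $<\theta_0$ and the nearest pole of $f=P/P^{*}$ has modulus $1/\sqrt{\theta_0}>1/\theta_0$. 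So the single-largest-root bound cannot yield the theorem: one must control the full product $\prod_{|\mu_k|>1}|\mu_k|$, which is exactly why Smyth's argument works directly with the low-order integer coefficients of $f$ \emph{and} of $1/f$ (both have modulus $1$ on $|z|=1$ and integer expansions), comparing them via Parseval-type identities after clearing the poles in the disk — not with the radius of convergence. Your closing remark about the smallest Pisot number points at a genuinely different theorem (one root outside the disk) and does not extend to the general case for the same reason: the example above has two roots outside the disk, each well below $\theta_0$.
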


\noindent Let us also cite a theorem due to A. Schinzel \cite{Schinzel 73}:
\begin{thm}\label{schinzel}[Schinzel]
If  $P$ is a monic  integer polynomial of degree $d$ whose all roots are real and such that $P(0)P(1)P(-1)\not=0$, then 
 \[M(P)\ge \left(\frac{1+\sqrt 5}{2}\right)^{\frac{d}{2}}.\] 
 \end{thm}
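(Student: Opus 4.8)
\emph{Proof idea.} The plan is to combine a ``squaring'' reduction with a one‑variable auxiliary inequality of the type used in explicit lower bounds for Mahler measures. Throughout write $P(x)=\prod_{k=1}^d(x-\mu_k)$ with all $\mu_k\in\R$, and set $\gamma=\tfrac{1+\sqrt5}{2}$, so that $\gamma^2=\gamma+1$.

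First I would pass from $P$ to the polynomial whose roots are the squares of the $\mu_k$. Since $P(x)P(-x)$ is an even polynomial with integer coefficients, there is a monic $Q\in\Z[y]$ of degree $d$ with $Q(y^2)=(-1)^dP(y)P(-y)$, and its roots are the numbers $\nu_k:=\mu_k^2$. The hypothesis $P(0)P(1)P(-1)\neq0$ forces $\mu_k\notin\{0,1,-1\}$, hence $\nu_k\in(0,\infty)$ and $\nu_k\neq1$; moreover $Q(0)=(-1)^dP(0)^2$ and $Q(1)=(-1)^dP(1)P(-1)$ are both nonzero. Because $Q$ is monic with integer coefficients, $|Q(0)|\ge1$ and $|Q(1)|\ge1$, which after taking logarithms reads
\[\sum_{k=1}^d\log\nu_k\ \ge\ 0\qquad\text{and}\qquad\sum_{k=1}^d\log|1-\nu_k|\ \ge\ 0.\]
By Jensen's formula, $M(Q)=\prod_{\nu_k\ge1}\nu_k=\prod_{k=1}^d\max(1,\nu_k)=M(P)^2$, so it is enough to prove $M(Q)\ge\gamma^d$.

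The heart of the argument is the pointwise inequality
\[\log\max(1,\nu)\ \ge\ \log\gamma+\lambda\log\nu+\kappa\log|1-\nu|\qquad\text{for all }\nu>0,\ \nu\neq1,\]
with the nonnegative constants $\lambda=\tfrac{5-\sqrt5}{10}$ and $\kappa=\tfrac1{\sqrt5}$; I would prove it by elementary calculus, treating $\nu\in(0,1)$ and $\nu>1$ separately. In each interval the function ``left minus right'' is smooth, tends to $+\infty$ at the endpoints (here one uses $0<\lambda$, $0<\kappa$ and, for $\nu\to\infty$, $\lambda+\kappa<1$), and has a unique critical point, which turns out to be $\nu=\gamma^{-2}$ on $(0,1)$ and $\nu=\gamma^{2}$ on $(1,\infty)$; evaluating there and using $\gamma^2=\gamma+1$ shows the minimum value is exactly $0$. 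Granting this, one applies the inequality with $\nu=\nu_k$, sums over $k$, and uses $\lambda,\kappa\ge0$ together with the two inequalities of the previous paragraph:
\[\log M(Q)=\sum_{k=1}^d\log\max(1,\nu_k)\ \ge\ d\log\gamma+\lambda\sum_{k=1}^d\log\nu_k+\kappa\sum_{k=1}^d\log|1-\nu_k|\ \ge\ d\log\gamma .\]
Hence $M(P)^2=M(Q)\ge\gamma^d$, i.e. $M(P)\ge\gamma^{d/2}$, which is the claim.

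I expect the only genuinely nontrivial step to be the choice of the multipliers $\lambda$ and $\kappa$: they must be nonnegative, so that the inequalities $|Q(0)|\ge1$ and $|Q(1)|\ge1$ can be exploited, and they must be calibrated so that the auxiliary inequality becomes an equality precisely at the squares of the roots of the extremal polynomial $x^2-x-1$. The natural way to locate them is to require that the function ``left minus right'' on $(1,\infty)$ vanish together with its derivative at $\nu=\gamma^2$; this yields the two equations $2\lambda+\kappa=1$ and $\tfrac{1-\lambda}{1-\lambda-\kappa}=\gamma^2$, whose unique solution is the pair above, after which one checks a posteriori that the inequality holds on all of $(0,\infty)\setminus\{1\}$ (the critical point on $(0,1)$ then automatically lands at $\gamma^{-2}$, with minimum value again $0$). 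The squaring reduction and the final summation are routine once this inequality is in hand.
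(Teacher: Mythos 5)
Your proof is correct: the squaring step, the identities $|Q(0)|=P(0)^2\ge 1$, $|Q(1)|=|P(1)P(-1)|\ge 1$, $M(Q)=M(P)^2$, and the pointwise inequality $\log\max(1,\nu)\ge\log\gamma+\lambda\log\nu+\kappa\log|1-\nu|$ with $\lambda=\tfrac{5-\sqrt5}{10}$, $\kappa=\tfrac1{\sqrt5}$ all check out (in particular $2\lambda+\kappa=1$ and $\tfrac{1-\lambda}{1-\lambda-\kappa}=\gamma^2$, so the unique critical points on $(1,\infty)$ and $(0,1)$ land at $\gamma^{\pm2}$ with value $0$, and $\lambda+\kappa<1$ gives the right behaviour at infinity). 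However, your route is genuinely different from the one taken in the paper. The paper does not reprove Schinzel's theorem as stated; it derives it as the special case $m=d$, $|P(0)|=|P(1)P(-1)|=1$ of its Theorem \ref{Schinzel}, whose proof works with the self-reciprocal product $Q=P(0)^{-1}PP^*$ (pairing each root with its inverse rather than squaring), writes $|Q(1)Q(-1)|^{1/2}=M(Q)\prod|1-\delta_k^{-2}|\prod(1-\beta_k^{-2})$, bounds the real factor globally by $\bigl(1-M(Q)^{-2/m}\bigr)^m$ via concavity of $t\mapsto\log(1-e^{-t})$, and then solves a quadratic inequality in $M(Q)^{1/m}$. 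That global argument buys a sharper statement: an explicit dependence on $|P(0)|$, $|P(1)P(-1)|$ and on the number $m$ of real roots, valid for complex polynomials, together with a characterization of the equality case. Your pointwise auxiliary-function argument, with multipliers calibrated so that equality occurs exactly at $\nu=\gamma^{\pm2}$ (the squares of the roots of $x^2-x-1$), is more elementary and self-contained, but with the multipliers fixed it only delivers the all-real-roots, integer-coefficient case with the constant $\gamma^{d/2}$; recovering the refined dependence would require re-optimizing $\lambda$ and $\kappa$ for each polynomial, which is in effect what the paper's concavity-plus-quadratic step does automatically.
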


 We will denote by $E_{\theta}$ the set of monic irreducible polynomials  $P\in \Z[x]$ such that $1<M(P) <\theta\le \theta_0$, where $\theta_0$ is as in in Theorem \ref{smyth}.   Many authors have focused on the computational search for such polynomials. D. Boyd \cite{Boyd 80, Boyd 89} gave a complete list of  polynomials in $E_{1.3}$ up to degree 20. This list was extended by Mossinghoff \cite{Mossinghoff 98} up to degree 24, then by Flammang, Rhin and Sac-\'Ep\'ee  \cite{Flammang et al. 06} with  exhaustive results up to degree 40 and by Mossinghoff, Rhin and Wu \cite{Mossinghoff et al. 08} who provided an exhaustive list up to degree 44.

M. Mossinghoff \cite{Mossinghoff 07} maintains a website that collects all known polynomials in $E_{1.3}$  of degree less than 180. New polynomials have been added to the list available on this website, from the works  of Rhin and Sac-\'Ep\'ee \cite{Rhin and SE  03}, Mossinghoff, Pinner and Vaaler \cite{Mossinghoff et al. 98}, and El Otmani, Maul, Rhin and Sac-\'Ep\'ee \cite{El Otmani Maul Rhin Sac-Epee 17}.

 In the present work, we are dealing with estimates about the zeros of a polynomial with complex coefficients  in relation with its Mahler measure.

 Because of their importance for Lehmer's conjecture, we are interested in applying  some of our results from Sections 4, 5 and 6 to obtain information on the zero sets of polynomials in $E_{\theta}$.
 
The paper is organised as follows: 
Section 2 provides notations, preliminary results and first properties of $E_\theta$.

Section 3 deals with  lower bounds on $\vert \mu-\omega\vert$ where  $P(\mu)=0$, $\vert \omega\vert=1$, $P(\omega)\not=0$.  Subsection 3.1 focusses on integer polynomials while Subsection 3.2 deals with real or complex polynomials, mainly self-reciprocal. The lower bounds involve either $M(P)$ or $L(P)$. 

A. Dubickas \cite{Dubickas 95}  showed that for all $\varepsilon>0$, there exists $D_0(\varepsilon)$ such that for all monic, irreducible, non-cyclotomic polynomials $P\in \Z[x]$ of degree $d\ge D_0(\varepsilon)$  and for all $\mu\in P^{-1}(0)$,
  \begin{equation}\label{DubIntro}  \vert\mu-1\vert > \exp\left(-\left(\frac{\pi}{4}+\varepsilon\right)\sqrt{d\log d\log M(P)}\right).
  \end{equation}
\noindent  Our Proposition \ref{Dubickas}  improves on (\ref {DubIntro}) by substituting the constant $\frac{\pi}{8}$ to  $\frac{\pi}{4}$ in the special case where $P$ is self-reciprocal. 

\noindent  For polynomials $P\in \C[x]$ of degree $d$, Theorem \ref{general} provides the following inequality   for all $\mu\in P^{-1}(0)$:
\[\vert  \mu-1\vert \ge \frac{\vert P(1)\vert}{edL(P)}.\]
In particular, if all the coefficients of $P$ are positive, then 
\[\vert  \mu-1\vert \ge  \frac{1}{ed}.\]

 In Section 4, we  generalise Theorem \ref{schinzel}   to  polynomials in $\C[x]$ and we strengthen the dependence on $P(0)$, $P(-1)$ and $P(1)$. Our Theorem \ref{Schinzel} states that for  monic polynomials $P\in \C[x]$  of degree $d$   having $m\ge 1$  real zeros and such that  $P(0)P(1)P(-1)\not=0$, the following holds:
\begin{equation*}
M(P)\ge \left(\frac{\vert P(1)P(-1)\vert^{\frac{1}{m}}+\left(4^{\frac{d}{m}}\vert P(0)\vert^{\frac{2}{m}}+\vert P(1)P(-1)\vert^{\frac{2}{m}}\right)^{\frac{1}{2}}}{2^{\frac{d}{m}}}\right)^{\frac{m}{2}}.
\end{equation*}

 In Section 5, we give upper bounds for the number of  real zeros of  $\displaystyle P(x)=\sum_{j=1}^d a_j x^j \in \C[x]$ by terms involving (besides  its degree) its Mahler measure or its length $\displaystyle L(P)=\sum_{j=1}^d \vert a_j\vert$. 
 
\noindent  A. Dubickas \cite{Dubickas 95} showed that for polynomials  $P\in \Z[x]$  of degree $d$ satisfying  $P(0)P(-1)P(1)\not=0$,  there is a constant $\alpha >0$ such that 
\begin{equation}\label{Dubickasintro}%
\vert P^{-1}(0)\cap \R\vert\le  \alpha \sqrt{d\log d \log M(P)}.
\end{equation}
Corollary \ref{L(P)Z} allows us  to improve (\ref{Dubickasintro}) by eliminating the term $\sqrt{\log d}$ in the special case where  $\log L(P)\le \sqrt{d\log M(P)}$.

 In Section 6, we focus on integer polynomials. The main result is the following  lower bound for the Mahler measure of a monic irreducible polynomial  $P\in \Z[x]$ of degree $d\ge D_0(\varepsilon)$  and satisfying $\log M(P)=o(\frac{d}{\log d})$:
\begin{equation*}
\log M(P) \ge  \left(C-\varepsilon\right)\frac{d}{K(P)^2\log d }\ \text{ with } C=\frac{4}{\pi^2}\log^2\left(\frac{1+\sqrt 5}{2}\right)
\end{equation*}
 and where we have denoted by $K(P)=\min\left(\left\vert P^{-1}(0)\cap\{\vert x-1\vert<1\}\right\vert, \left\vert P^{-1}(0)\cap\{\vert x+1\vert<1\}\right\vert \right)$.


\section{Notations and preliminary results } 

\noindent Consider a polynomial $P(x)=\displaystyle\sum_{j=0}^d a_j x^j\in \C[x]$ of degree $d$.  In our estimates in relation with $P$, besides $M(P)$, we will use the following quantities:
 \begin{equation*}\label{quant}
 \begin{split}
 &H(P)=\max_{0\le j\le d}\vert a_j\vert,\ \ \ L(P)=\sum_{j=0}^d\vert a_j\vert, \\
& \Vert P\Vert=\sup_{\vert x\vert=1} \vert P(x)\vert,\ \ \  L_2(P)=\left(\sum_{j=0}^d\vert a_j\vert^2\right)^{\frac{1}{2}}.
 \end{split}
 \end{equation*}
 The height $H(P)$  and the length $L(P)$ were introduced in \cite{Mahler 62}  by K. Mahler who compared them to $M(P)$,  showing the inequalities
  \begin{equation}\label{ineq}
 \begin{split}
&\vert a_j\vert \le M(P) {d\choose j}\ \ (j=0,\ldots, d),\\
&M(P)\le L_2(P)\le L(P)\le 2^dM(P),\\
&H(P)\le 2^{d-1}M(P),\ \ \ M(P)\le (d+1)^{\frac{1}{2}} H(P).\\
\end{split}
\end{equation}
Using Cauchy-Schwarz inequality, we also have
\begin{equation}\label{LP}
\vert P(1)\vert\le\Vert P\Vert\le L(P)\le (d+1)^{\frac{1}{2}} L_2(P)
\end{equation}
and by  Parseval formula, 
\begin{equation}\label{Parseval}
L_2(P)=\left(\frac{1}{2\pi}\int_0^{2\pi}\vert P(e^{i\theta})\vert^2d\theta\right)^{\frac{1}{2}}\le \Vert P\Vert.
\end{equation}
\noindent Define the reciprocal of $P$ by
\[P^*(x)=x^dP\left(\frac{1}{x}\right)=\sum_{j=0}^d a_{d-j} x^j.\]
\noindent  Clearly, $P$ is self-reciprocal iff $P=P^*$.

\noindent Let $P(x)=\displaystyle\sum_{j=0}^d a_j x^j=\displaystyle\prod_{k=1}^d(x-\mu_k)\in \C[x]$, $a_d=1$, $a_0\not=0$. 
 We will denote by 
\begin{equation}\label{Q}
 Q(x)=a_0^{-1}P(x)P^*(x)=\prod_{k=1}^d (x-\mu_k)\prod_{k=1}^d (x-\mu_k^{-1})=\sum_{j=0}^{2d}A_j x^j.
\end{equation}
Then $Q$ is monic and self-reciprocal and
\begin{equation}\label{Q-P}
M(Q)=\vert a_0\vert^{-1}M(P)^2,\ Q(1)=a_0^{-1}P(1)^2,\ Q(-1)=(-1)^da_0^{-1}P(-1)^2.
\end{equation}
Besides,  the coefficients are given by
\[A_j=a_0^{-1}\sum_{k=0}^ja_k a_{d-j+k}\ \  (j=0,\ldots,d)\]
and they satisfy
\begin{equation}\label{A-P}
\sum_{j=0}^d\vert A_j\vert \le \vert a_0^{-1}\vert\sum_{j=0}^d\sum_{k=0}^{j}\vert a_k \vert \vert a_{d-j+k}\vert= \vert a_0^{-1}\vert\sum_{k=0}^d\left(\vert a_k\vert\sum_{j=k}^{d}\vert a_j\vert \right)\le \vert a_0^{-1}\vert L(P)^2.
\end{equation}

 For a  set $A\subset \C$, we will denote by $\vert P^{-1}(0)\cap A\vert$ the number of zeros of $P$ belonging to $A$ counted with multiplicities. 
\begin{prop}\label{b_r} Let $r>1$ and $P$ be a monic polynomial in $\C[x]$.  Then
\[\vert P^{-1}(0)\cap \{\vert x\vert> r\} \vert <\frac{\log{M(P)}}{\log r}.\]
\end{prop}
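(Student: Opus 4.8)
The plan is to bound the product of the large zeros from below and compare it with $M(P)$. Write $P(x)=\prod_{k=1}^d(x-\mu_k)$ since $P$ is monic, and let $n=\vert P^{-1}(0)\cap\{\vert x\vert>r\}\vert$ denote the number of zeros (with multiplicity) of modulus strictly greater than $r$. For each such zero one has $\vert\mu_k\vert>r>1$, so in particular $\vert\mu_k\vert\ge 1$ and these zeros all contribute to the Jensen-formula expression $M(P)=\prod_{\vert\mu_k\vert\ge 1}\vert\mu_k\vert$ (recall $P$ is monic, so $\vert a_d\vert=1$).

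First I would observe that every factor $\vert\mu_k\vert$ with $\vert\mu_k\vert\ge 1$ is at least $1$, so dropping all factors except those coming from the $n$ zeros of modulus $>r$ only decreases the product:
\[
M(P)=\prod_{\vert\mu_k\vert\ge 1}\vert\mu_k\vert\ \ge\ \prod_{\vert\mu_k\vert> r}\vert\mu_k\vert\ >\ r^{n}.
\]
The strictness of the last inequality comes precisely from the fact that each of the $n$ factors exceeds $r$ (and if $n=0$ the claimed bound is trivial since $M(P)\ge 1>0=\frac{\log M(P)}{\log r}$ when $M(P)=1$, or more simply $0<\frac{\log M(P)}{\log r}$ whenever $M(P)>1$; one should just handle $n=0$ separately as immediate). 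Taking logarithms (valid and order-preserving since $r>1$ gives $\log r>0$, and $M(P)>r^n\ge 1$ so $\log M(P)>0$) yields $\log M(P)>n\log r$, hence $n<\dfrac{\log M(P)}{\log r}$, which is the assertion.

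There is essentially no obstacle here; the only point requiring a moment's care is the bookkeeping in passing from the full product over $\{\vert\mu_k\vert\ge1\}$ to the sub-product over $\{\vert\mu_k\vert>r\}$, namely checking that the discarded factors are all $\ge 1$ so the inequality goes the right way, and treating the degenerate case $n=0$ (and the case $M(P)=1$, which forces $n=0$ anyway) so that the final strict inequality is not vacuously claimed about $\log 1=0$. Everything else is Jensen's formula, already recorded in the introduction, together with monotonicity of $\log$.
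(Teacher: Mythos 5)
Your argument is correct and essentially identical to the paper's proof, which likewise bounds $r^{N}$ strictly below the product of the moduli of the zeros in $\{\vert x\vert>r\}$, then below $\prod_{\vert\mu\vert\ge 1}\vert\mu\vert=M(P)$, and takes logarithms. Your extra attention to the case $n=0$ is more care than the paper takes (note that when $M(P)=1$ the claimed strict inequality degenerates to $0<0$, a defect shared by the paper's own statement and harmless in all its applications), but this does not change the substance.
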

\begin{proof}
Put $N=\vert P^{-1}(0)\cap \{\vert x\vert> r\}\vert$. Then
\[r^N<  \prod_{\mu\in  P^{-1}(0)\cap \{\vert x\vert> r\}}\vert\mu\vert\le \prod_{\mu \in  P^{-1}(0)\cap \{\vert x\vert\ge 1\}}\vert\mu\vert=M(P).\]
\end{proof}

 \noindent For $1<\theta< \theta_0$, where $\theta_0$ is given by Theorem \ref{smyth}, let 
$E_{\theta}$ be the set of integer monic  irreducible polynomials $P$ such that $1<M(P)\le \theta$ and satisfying conditions  c1 and c2:
\begin{itemize}
\item[c1.] $P$ is primitive, i.e.,  it cannot be written as $Q(x^m)$ with $m\ge 2$, $Q\in \Z[x]$,
\item[c2.]  $a_{j_0}> 0$, where  $j_0=\min\{j\ge 1 \text{ such that } a_j\not=0\}$.  This condition is satisfied  either by $P(x)$ or  by $P(-x)$.   
\end{itemize}
Conditions c1 and c2 are usually added by the authors looking for polynomials with small Mahler measure via computational methods in order to avoid redundancy, since
\[M(P(x))=M(P(-x)) \text{ and } M(P(x^m))=M(P(x)).\]
\begin{rem}
\begin{itemize}
\item For any $P\in E_\theta$ and $\mu \in P^{-1}(0)$, $P$ is the minimal polynomial of $\mu$ in $\Z[x]$.
\item By Theorem \ref{smyth}, all polynomials of $E_{\theta}$ are self-reciprocal.
\item Any self-reciprocal polynomial of odd degree is divisible by $(x+1)$. Thus all polynomials in $E_\theta$ are of even degree. 
\item By Kronecker's theorem, the cyclotomic polynomials are excluded of $E_\theta$ because their Mahler measure is equal to $1$. 
\end{itemize}
\end{rem}
The first properties satisfied by the zero set $\displaystyle Z_\theta=\bigcup_{P\in E_\theta} P^{-1}(0)$ are listed in the proposition below.

\begin{prop}${}$

\begin{enumerate}
\item All $P^{-1}(0)$, $P\in E_\theta$ are disjoint and they contain only  simple zeros.
\item For all $P\in E_\theta$, $P^{-1}(0)$ is symmetric with regard to $ \{\vert x\vert=1\}$ and $\{\Im x=0\}$.
\item $ Z_\theta\subset \{\frac{1}{\theta}\le \vert x\vert \le\theta\}$.
\item $Z_\theta \cap \{\Im x\not=0\}\subset \{\frac{1}{\sqrt{\theta}}\le \vert x\vert \le \sqrt{\theta}\}$.
\item For all $P\in E_\theta$ of degree $2n$, and for all $r>1$,  \[\vert P^{-1}(0)\cap \{\frac{1}{r}\le \vert x\vert\le r\}\vert > 2(n-\frac{\log\theta}{\log r}).\]
\item $Z_\theta$ does not contain any root of unity.
\item $Z_\theta$ does not intersect the imaginary axe.

\end{enumerate}
\end{prop}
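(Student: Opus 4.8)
The plan is to verify the seven items in turn. Items (1)--(6) are quick consequences of the structural facts collected in the preceding Remark --- every $P\in E_\theta$ is monic, irreducible over $\Q$, self-reciprocal, of even degree $2n$, satisfies $M(P)>1$, is not cyclotomic, and is the minimal polynomial of each of its zeros --- together with Jensen's identity $M(P)=\prod_{\vert\mu_k\vert\ge1}\vert\mu_k\vert$, Proposition \ref{b_r}, and the observation that $P(0)=a_0\ne0$ (otherwise $x\mid P$ and $P=x$, which has Mahler measure $1$).

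For (1): distinct members of $E_\theta$ cannot share a zero $\mu$, for both would then be the monic minimal polynomial of $\mu$; and an irreducible polynomial in characteristic $0$ is separable, so its zeros are simple. For (2): since $P\in\Z[x]$ one has $\mu\in P^{-1}(0)\Rightarrow\bar\mu\in P^{-1}(0)$, which is the reflection in $\{\Im x=0\}$; since $P=P^*$ and $\mu\ne0$, also $1/\mu\in P^{-1}(0)$, and composing gives $1/\bar\mu\in P^{-1}(0)$, the inversion in $\{\vert x\vert=1\}$. For (3): a zero $\mu$ with $\vert\mu\vert\ge1$ satisfies $\vert\mu\vert\le\prod_{\vert\mu_k\vert\ge1}\vert\mu_k\vert=M(P)\le\theta$, while $\vert\mu\vert<1$ is trivially in range; applying this to $1/\mu$ gives $\vert\mu\vert\ge1/\theta$. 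For (4): a non-real zero $\mu$ comes with a distinct zero $\bar\mu$ of equal modulus, so if $\vert\mu\vert>1$ then $M(P)\ge\vert\mu\vert\,\vert\bar\mu\vert=\vert\mu\vert^2\le\theta$, i.e. $\vert\mu\vert\le\sqrt\theta$; running the same argument with the (non-real) zero $1/\mu$ gives $\vert\mu\vert\ge1/\sqrt\theta$. For (5): set $N=\vert P^{-1}(0)\cap\{\vert x\vert>r\}\vert$; Proposition \ref{b_r} gives $N<\log M(P)/\log r\le\log\theta/\log r$, the substitution $x\mapsto1/x$ together with $P=P^*$ gives $\vert P^{-1}(0)\cap\{\vert x\vert<1/r\}\vert=N$, and since none of the $2n$ zeros vanishes we obtain $\vert P^{-1}(0)\cap\{1/r\le\vert x\vert\le r\}\vert=2n-2N>2\bigl(n-\log\theta/\log r\bigr)$. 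For (6): the minimal polynomial of a root of unity is cyclotomic, and cyclotomic polynomials (Mahler measure $1$, by Kronecker) are excluded from $E_\theta$; hence $Z_\theta$ contains no root of unity.

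The only item requiring a different idea is (7), which is where condition c1 enters. Since $0\notin Z_\theta$, it suffices to rule out $\mu=it$ with $t\in\R\setminus\{0\}$. If $P(it)=0$ for some $P\in E_\theta$, then because $P$ has real coefficients $P(-it)=\overline{P(it)}=0$, so $it$ is a common zero of $P(x)$ and of $P(-x)$; thus $\gcd_{\Q}\bigl(P(x),P(-x)\bigr)\ne1$, and as $P$ is irreducible, $P(x)$ divides $P(-x)$ in $\Q[x]$. Since $P(-x)$ is again monic of the same even degree $2n$, this forces $P(x)=P(-x)$, i.e. $P(x)=R(x^2)$ with $R\in\Z[x]$ and $\deg R=n\ge1$ --- contradicting condition c1. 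Hence $Z_\theta$ meets no point of the imaginary axis. I expect the only place needing care to be the bookkeeping in this last step (checking that $P(-x)$ is monic, which uses the even degree); elsewhere the arguments are immediate once one consistently splits into the cases $\vert\mu\vert\ge1$ and $\vert\mu\vert<1$ when using Jensen's formula and uses self-reciprocity to reduce each lower bound to the corresponding upper bound.
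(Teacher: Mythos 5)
Your proof is correct and follows essentially the same route as the paper's: items (1)--(6) are handled identically (uniqueness of the minimal polynomial, the symmetries $\mu\mapsto\bar\mu$ and $\mu\mapsto1/\mu$, the Jensen product formula for $M(P)$, and Proposition \ref{b_r}), and item (7) again rests on condition c1, the only cosmetic difference being that you reach $P(x)=P(-x)$ via irreducibility and a divisibility argument, whereas the paper observes directly that the odd part $\sum_k a_{2k+1}x^{2k+1}$ is a nonzero integer polynomial of degree less than $\deg P$ annihilating $iy$, contradicting minimality. (The chain ``$M(P)\ge\vert\mu\vert\,\vert\bar\mu\vert=\vert\mu\vert^{2}\le\theta$'' in your item (4) is garbled as written, but the intended inequality $\vert\mu\vert^{2}\le M(P)\le\theta$ is clear from context.)
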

\begin{proof}
\begin{enumerate}
\item This follows immediately after  the uniqueness of the minimal polynomial of an algebraic integer. 
\item This is because $P(x)=0 \Leftrightarrow P(\bar x)=0 \Leftrightarrow  P(1/x)=0$.
\item Let $P\in E_\theta$.  By symmetry of $P^{-1}(0)$ with regards to $ \{\vert x\vert=1\}$, it suffices to note that if $x\in P^{-1}(0)\cap \{\vert x\vert>1\}$ then $\vert x\vert\le M(P)\le \theta$.  
\item  Let $P\in E_\theta$. By symmetry of $P^{-1}(0)$ with regards to $ \{\vert x\vert=1\}$ and $\{\Im x=0\}$, it suffices to note that if $x\in P^{-1}(0)\cap \{\vert x\vert>1\}\cap \{\Im x\not=0\}$ then $\vert x\vert^2\le M(P)\le \theta$. 
\item This is a consequence of Proposition \ref{b_r} and 2.
\item The minimal polynomials corresponding to the roots of unity are cyclotomic polynomials, which are excluded from $E_\theta$.
\item Assume that for some $y\in \R$ and $\displaystyle P=\sum_{j=0}^{2n}a_j x^j\in E_{\theta}$ $(a_{2n}\not=0)$, $P(iy)=0$. Then $\displaystyle i\Im(P(iy))=\sum_{k=0}^{n-1} a_{2k+1} (iy)^{2k+1}=0$. But at least one of the coefficients $a_{2k+1}$ is not zero by condition c1.  This would contradict the fact that $P$ is the minimal polynomial of $iy$.

\end{enumerate}
\end{proof}
For illustration purposes, the graph on the left in Figure 1 shows all known roots belonging to $Z_{1.3}$, corresponding to polynomials of degree less than 180. In particular, one can visualize the properties of symmetry of this set. The small Mahler measure polynomials from which these roots are derived are listed on Michael Mossinghoff's website \cite{Mossinghoff 07}.  
\begin{figure}
    \caption{\label{etiquette} The known elements of $Z_{1.3}$ obtained from polynomials of degree less than 180}
    \includegraphics[scale=0.25]{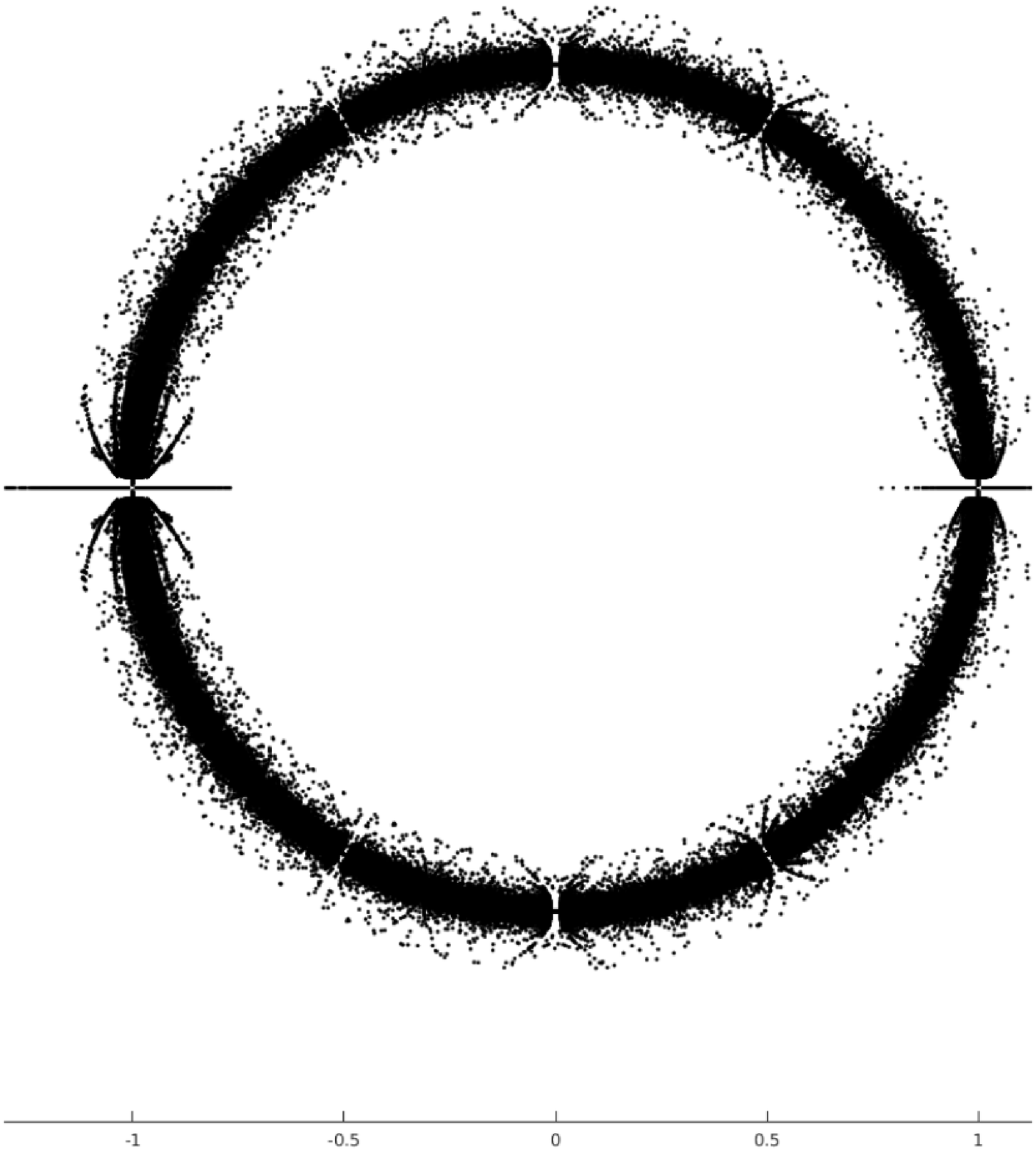}\hfill
    \includegraphics[scale=0.25]{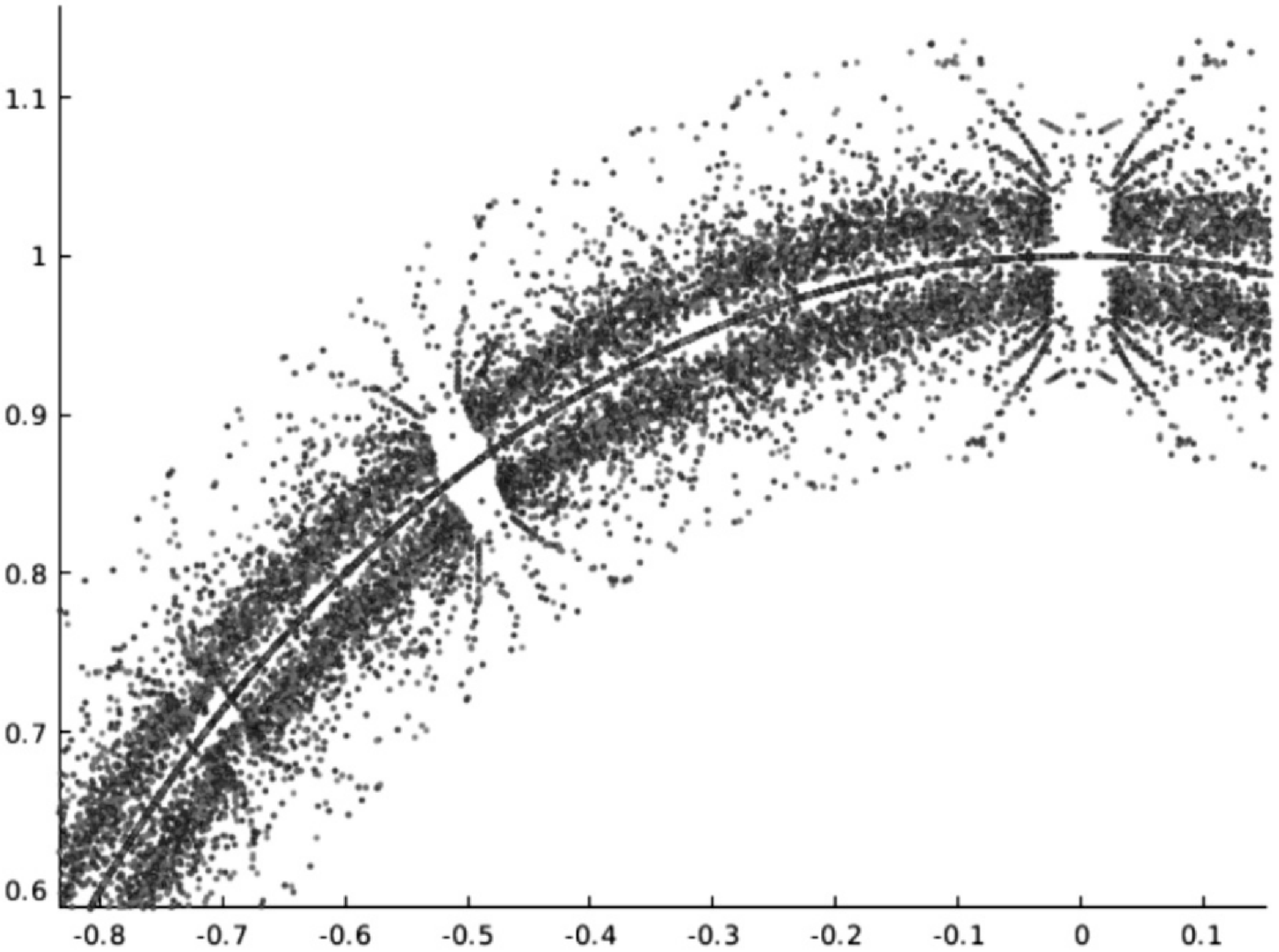}
\end{figure}


\section{Distance between zeros of a polynomial and non-zeros of modulus one}
On Figure 1, we notice that the roots of the polynomials in $E_{1.3 }$ seem to stay far from the roots of unity. The results given in this section, when specialised to polynomials in $E_{\theta}$, $\theta\le \theta_0$, allow us to estimate from below the distance between the zeros of $P\in E_\theta$ and the roots of unity.  Most of our  results deal with  polynomials which are self-reciprocal of even degree like the ones in  $E_{\theta}$.

\subsection{Lower bounds involving the Mahler measure}
For an algebraic number $\mu\not=1$ of degree $d$, Liouville inequality  gives:
 \[\vert\mu-1\vert \ge 2^{1-d}M(\mu)^{-1}.\]
We prove the following Liouville-type inequalities: 
\begin{prop}\label{lower}  Assume that $P\in \Z[x]$ is monic, self-reciprocal of degree $2n$ and does  not vanish at the roots of unity. If  $P(\mu)=0$ and
 $\omega^m=1$ or $\omega^m=-1$ $(m\in \N^*)$, then
\[\text{ If } \mu \in \R \text { or } \vert \mu\vert=1, \ \ \vert  \mu-\omega\vert\ge  m^{-1}2^{1-n}M(P)^{-\frac{m}{2}},\]
\[\text{ If } \mu \notin \R \text{ and }\ \vert \mu\vert \not=1,\ \  \vert  \mu-\omega\vert\ge m^{-1}2^{1-\frac{n}{2}}M(P)^{-\frac{m}{4}},\]
\end{prop}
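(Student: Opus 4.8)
\section*{Proof proposal}

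The plan is to derive a Liouville-type bound from the resultant of $P$ with the polynomial cutting out $\omega$. Set $\epsilon=\omega^m\in\{1,-1\}$ and let $Q(x)=x^m-\epsilon\in\Z[x]$, a monic polynomial of degree $m$ whose roots are all roots of unity and which vanishes at $\omega$. Since $P$ is assumed not to vanish at any root of unity, $P$ and $Q$ are coprime, so $R:=\operatorname{Res}(P,Q)$ is a nonzero integer; hence $|R|\ge 1$. Writing $\mu_1,\dots,\mu_{2n}$ for the roots of $P$ (with multiplicity) and using that $P$ is monic, we get $1\le|R|=\prod_{k=1}^{2n}|Q(\mu_k)|=\prod_{k=1}^{2n}|\mu_k^m-\epsilon|$. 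Observe also that $\mu\ne\omega$ and $\mu^m\ne\epsilon$ automatically, since otherwise $\mu$ would be a root of unity.

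The core of the argument is to extract from this product a good lower bound for $|\mu^m-\epsilon|=|\mu^m-\omega^m|$, which we then turn into one for $|\mu-\omega|$ through the factorisation $\mu^m-\omega^m=(\mu-\omega)\sum_{j=0}^{m-1}\mu^j\omega^{m-1-j}$, using $|\omega|=1$ to bound $\bigl|\sum_{j=0}^{m-1}\mu^j\omega^{m-1-j}\bigr|\le\sum_{j=0}^{m-1}|\mu|^j\le m\max(|\mu|,1)^{m-1}$. So it remains to isolate, in $\prod_k|\mu_k^m-\epsilon|$, the factor(s) carrying $\mu$ and to bound the rest from above. This is where self-reciprocity enters: the roots of $P$ are stable under $\alpha\mapsto 1/\alpha$, none of them equals $\pm1$, and the constant term of $P$ equals its leading coefficient $1$ so that $0$ is not a root; hence the $2n$ roots split into $n$ pairs $\{\beta,1/\beta\}$. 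For each pair one computes $Q(1/\beta)=-\epsilon\beta^{-m}Q(\beta)$, hence $|Q(\beta)Q(1/\beta)|=|\beta|^{-m}|\beta^m-\epsilon|^2\le 4\max(|\beta|,|\beta|^{-1})^m$ after using $|\beta^m-\epsilon|\le|\beta|^m+1\le 2\max(|\beta|,1)^m$; moreover $\prod\max(|\beta|,|\beta|^{-1})$ over the $n$ pairs equals $M(P)$ since $P$ is monic.

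If $\mu\in\R$ or $|\mu|=1$, then $\{\mu,1/\mu\}$ is one of the $n$ pairs; keeping its contribution $|\mu|^{-m}|\mu^m-\omega^m|^2$ and bounding the other $n-1$ pairs as above gives $1\le|\mu|^{-m}|\mu^m-\omega^m|^2\cdot 4^{n-1}\bigl(M(P)/\max(|\mu|,|\mu|^{-1})\bigr)^m$. Together with the identity $|\mu|^m\max(|\mu|,|\mu|^{-1})^m=\max(|\mu|,1)^{2m}$ this yields $|\mu^m-\omega^m|\ge\max(|\mu|,1)^m\,2^{1-n}M(P)^{-m/2}$, and dividing by $m\max(|\mu|,1)^{m-1}$ (and using $\max(|\mu|,1)\ge1$) gives the first bound. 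If instead $\mu\notin\R$ and $|\mu|\ne1$, then $\mu,\bar\mu,1/\mu,1/\bar\mu$ are four distinct roots making up the two pairs $\{\mu,1/\mu\}$ and $\{\bar\mu,1/\bar\mu\}$; since $|Q(\bar\mu)|=|Q(\mu)|$, their combined contribution is $|\mu|^{-2m}|\mu^m-\omega^m|^4$, and bounding the remaining $n-2$ pairs gives $1\le|\mu|^{-2m}|\mu^m-\omega^m|^4\cdot 4^{n-2}\bigl(M(P)/\max(|\mu|,|\mu|^{-1})^2\bigr)^m$, whence $|\mu^m-\omega^m|\ge\max(|\mu|,1)^m\,2^{1-n/2}M(P)^{-m/4}$ and the second bound follows in the same way.

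The only delicate point is the bookkeeping of the pairing: one must check that the pairs $\{\beta,1/\beta\}$ are genuinely disjoint (no root is $\pm1$, and $0$ is not a root), track multiplicities correctly (extra copies of the pair $\{\mu,1/\mu\}$ are harmlessly absorbed into the upper-bound factor), verify that in the non-real non-unimodular case $\bar\mu$ really is separated from $1/\mu$ (this is exactly where $|\mu|\ne1$ is used), and note that $\prod\max(|\beta|,|\beta|^{-1})$ over the pairs collapses to $M(P)$. The rest is routine resultant and triangle-inequality manipulation; equivalently one could run the computation through $\operatorname{Res}\bigl(P(x)P^*(x),Q(x)\bigr)$, but the pair-by-pair estimate above seems cleanest.
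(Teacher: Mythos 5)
Your proof is correct and follows essentially the same route as the paper: both rest on the nonzero integer $\prod_k(\mu_k^m-\omega^m)$ (which you phrase as a resultant), extract $M(P)^m$, bound the remaining factors by powers of $2$ according to whether $\mu$ contributes two or four nearby roots, and pass from $|\mu^m-\omega^m|$ to $|\mu-\omega|$ via the geometric-sum factorization at the cost of a factor $m$. Your pairing $\{\beta,1/\beta\}$ with the uniform bound $4\max(|\beta|,|\beta|^{-1})^m$ is just a tidier bookkeeping of the paper's split of the roots into its sets $A$ and $B$, and lets you avoid the paper's separate reductions to $\omega^m=1$ and to $|\mu|\ge 1$.
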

\begin{proof}
By substituting $P(-x)$ to $P(x)$,  we may assume that $\omega^m=1$ and  by substituting $\bar \omega$ to $\omega$, we may assume that $\Im \mu \ge 0$.  We divide $P^{-1}(0)\cap\{\vert x\vert\ge 1, \ \Im x\ge 0\}$ into two sets:
\begin{equation*}
\begin{split}
&\mu \in A \Leftrightarrow  P(\mu)=0,\ \vert \mu\vert=1, \Im \mu>0 \text{ or }  \vert \mu\vert>1, \Im \mu=0,\\
&\mu\in B \Leftrightarrow P(\mu)=0,\ \vert \mu\vert>1, \Im \mu >0.
\end{split}
\end{equation*}
Then $P^{-1}(0)=\{\mu, \mu^{-1},\ \mu \in A\}\cup\{\mu, \mu^{-1}, \bar \mu, \bar \mu^{-1},\ \mu \in B\}$ and
\begin{equation*}\label{Liouville}
 \prod_ {\mu\in  P^{-1}(0)}\vert 1-\mu^m\vert=M(P)^m\prod_{\mu\in A}\vert 1-\mu^{-m}\vert^{2} \prod_{\mu\in B}\vert 1-\mu^{-m}\vert^{4}. \\
\end{equation*}
 $\displaystyle\prod_{\mu\in  P^{-1}(0)}(1-\mu^m)$ is a symmetric polynomial in the zeros of $P$ thus it is an integer and as $P$ does not vanish at the roots of unity, it is  a non-zero integer. We deduce that 
 \begin{equation*}
\begin{split}
1\le M(P)^{\frac{m}{2}}\prod_{\mu\in A}\vert 1-\mu^{-m}\vert \prod_{\mu\in B}\vert 1-\mu^{-m}\vert^{2} .  \\
\end{split}
\end{equation*}
For all $\mu \in A\cup B$, we have $\vert1-\mu^{-m}\vert \le 2$ thus
\[\forall \mu \in A, \ \ 1\le M(P)^{\frac{m}{2}} 2^{n-1}\vert 1-\mu^{-m}\vert,\]
\[\forall \mu \in B, \ \ 1\le M(P)^{\frac{m}{2}} 2^{n-2}\vert 1-\mu^{-m}\vert^{2}.\]
To obtain the desired estimate for all  $\mu \in A\cup B$, we use the following
\begin{equation*}\label{mum}
\vert 1-\mu^{-m}\vert \le \vert \bar\omega-\mu^{-1}\vert \sum_{j=0}^{m-1}\vert \mu^{-j}\vert\le m \vert \bar\omega-\mu^{-1}\vert =m\vert \mu\vert^{-1}\vert \omega-\mu\vert\le  m\vert \omega-\mu\vert.
\end{equation*}
If $\vert \mu\vert<1$, we apply the former to  $\bar \mu^{-1}$. and we use  $\vert 1-\bar\mu^{m}\vert \le m \vert \omega-\mu\vert$.
\end{proof}

For integer polynomials of small Mahler measure,  there are better lower bounds due to Mignotte and Waldschmitt \cite{Mignotte and Waldschmidt 94}. They showed that for all $\varepsilon>0$, there exists $D_0(\varepsilon)$ such that for $d\ge D_0(\varepsilon)$
 \[\vert\mu-1\vert > \exp\left(-(1+\varepsilon)\sqrt{d\log d\log M}\right).\] 
This was improved by Dubickas \cite{Dubickas 95}  as follows:
  \begin{equation}\label{Dub}  \vert\mu-1\vert > \exp\left(-\left(\frac{\pi}{4}+\varepsilon\right)\sqrt{d\log d\log M}\right).
  \end{equation}

With a minor modification in Dubickas proof  of Theorem 1 in \cite{Dubickas 95}, we obtain a better result than (\ref{Dub}) in the special case when $P$ is self-reciprocal. This is displayed in the next proposition.  
\begin{prop}\label{Dubickas}  Assume that $P\in \Z[x]$ is  monic, irreducible, self-reciprocal  of degree $d$ and satisfies $M(P)>1$. Let  $P(\mu)=0$  and
 $\omega^m=1$ or $\omega^m=-1$ $(m\in \N^*)$. Then for all $\varepsilon>0$, there exists $D_0(\varepsilon)$ such that for $d\ge D_0(\varepsilon)$
\[\vert \omega- \mu\vert\ge  \exp\left(-\left(\frac{\pi \sqrt m}{8}+\varepsilon\right)\sqrt{d\log d\log M(P)}\right).\]
If $\mu \notin \R$ and $\vert \mu\vert \not=1$, then
\[\vert \omega- \mu\vert\ge    \exp\left(-\left(\frac{\pi \sqrt m}{16}+\varepsilon\right)\sqrt{d\log d\log M(P)}\right).\]
\end{prop}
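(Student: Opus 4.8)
The plan is to follow Dubickas' argument for Theorem 1 in \cite{Dubickas 95}, but exploit the self-reciprocity to halve the relevant degree parameter. As in the proof of Proposition \ref{lower}, by replacing $P(x)$ with $P(-x)$ if necessary we may assume $\omega^m=1$, and by replacing $\omega$ with $\bar\omega$ we may assume $\Im\mu\ge 0$. The key quantity is the nonzero rational integer $N=\prod_{P(\nu)=0}(1-\nu^m)=\mathrm{Res}(P(x),1-x^m)$, which is a nonzero integer because $P$ is irreducible, noncyclotomic, and does not vanish at any $m$-th root of unity. Taking absolute values and isolating the factor coming from $\mu$ (and its conjugates $\bar\mu,\mu^{-1},\bar\mu^{-1}$ forced by self-reciprocity and real coefficients), one gets, exactly as in Proposition \ref{lower},
\[
1\le |1-\mu^{-m}|^{\,s}\, M(P)^{\,m/2}\prod_{\nu\in S}|1-\nu^m|,
\]
where $s\in\{1,2\}$ records how many of the four conjugates of $\mu$ coincide modulo the symmetry, and $S$ is the remaining set of zeros. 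The self-reciprocity is what produces the exponent $m/2$ on $M(P)$ rather than $m$, and this is precisely the source of the improved constant $\pi\sqrt m/8$ (resp. $\pi\sqrt m/16$) in place of $\pi\sqrt m/4$.

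Next I would bound the product $\prod_{\nu\in S}|1-\nu^m|$ from above. The crude bound $|1-\nu^m|\le(1+|\nu|)^m\le 2^m$ is far too lossy; instead, following Dubickas, one splits the zeros by how close $|\nu|$ is to $1$ and uses Proposition \ref{b_r} (and its mirror for $|x|<1/r$) to control how many zeros can lie in an annulus $1<|x|<r$: there are at most $\log M(P)/\log r$ of them outside the disk of radius $r$, and by self-reciprocity the same count holds inside radius $1/r$. One then writes $\log\prod_{\nu\in S}|1-\nu^m|\le m\sum_{\nu\in S}\log^+|\nu| + (\text{terms from }|1-\nu^{m}|\text{ with }|\nu|\text{ near }1)$; the first sum is $m\log M(P)$, while the second is handled by an integration-by-parts / Abel summation argument against the counting function, optimised in the radius parameter. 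This is the step where the $\sqrt{d\log d\log M(P)}$ scaling and the constant $\pi/8$ emerge: the optimal choice of truncation radius is roughly $r=1+c\sqrt{\log M(P)/(d\log d)}$ or similar, and the $\pi$ comes from estimating $\sum \log|1-\zeta^m|$ over equally spaced points near the circle via $\int_0^{1}\log|2\sin\pi t|\,dt=0$ together with the discrepancy coming from having only $d$ zeros.

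Finally, I would convert the bound on $|1-\mu^{-m}|$ into one on $|\omega-\mu|$ using the elementary inequality from the proof of Proposition \ref{lower}:
\[
|1-\mu^{-m}|\le m\,|\bar\omega-\mu^{-1}|=m\,|\mu|^{-1}|\omega-\mu|,
\]
and noting $|\mu|^{-1}$ is bounded (indeed $|\mu|\le M(P)$, which only affects lower-order terms absorbed into $\varepsilon$); in the case $\mu\notin\R$, $|\mu|\ne 1$ one also feeds in the conjugate factor $|1-\bar\mu^{m}|$, which doubles the exponent saving and yields $\pi\sqrt m/16$. Assembling: $\log|\omega-\mu|\ge -\tfrac{m}{2}\log M(P) - \log\prod_{\nu\in S}|1-\nu^m| - O(\log d)$, and substituting the optimised upper bound for the product gives the stated inequality for all $d\ge D_0(\varepsilon)$.

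The main obstacle is the second step: getting the sharp constant $\pi/8$ out of the product $\prod_{\nu\in S}|1-\nu^m|$. This requires carefully tracking that the zeros contributing most are those with modulus extremely close to $1$, bounding their number via Proposition \ref{b_r}, and then optimising the truncation radius — all while keeping the self-reciprocity bookkeeping ($M(P)^{m/2}$ instead of $M(P)^m$, and each zero "counted twice" by the $|x|=1$ symmetry) consistent throughout, since it is exactly this bookkeeping that distinguishes our bound from Dubickas'. The rest is routine once Dubickas' estimate is in hand, since we are essentially re-running his computation with $d$ replaced by $d/2$ in the dominant term.
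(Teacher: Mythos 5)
The first thing to note is that the paper gives no proof of Proposition~\ref{Dubickas} at all: the authors only assert that it follows from ``a minor modification'' of the proof of Theorem~1 in \cite{Dubickas 95}. Your plan --- rerun Dubickas' argument and use self-reciprocity to gain a constant factor --- is therefore aligned with what the authors intend, and your bookkeeping of the conjugates ($\mu$, $\bar\mu$, $\mu^{-1}$, $\bar\mu^{-1}$, with the critical factor entering the integer resultant with multiplicity $2$ or $4$) is the right structural observation. But what you have written is a plan, not a proof, and it contains two concrete gaps.

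(i) The quantity you designate as ``the key quantity'', the single resultant $\mathrm{Res}(P,1-x^m)=\prod_\nu(1-\nu^m)$, cannot by itself produce a bound of the shape $\exp\bigl(-c\sqrt{d\log d\log M(P)}\bigr)$: bounding the complementary factors by $2\max(1,|\nu|)^m$ yields only the Liouville-type estimate of Proposition~\ref{lower}, whose exponent is linear in $d$. Dubickas' improvement rests on an auxiliary product of \emph{many} resultants, $\prod_{j=1}^{N-1}\bigl(\prod_k|\mu_k^{jm}-1|\bigr)^{N-j}$, controlled through a Vandermonde determinant and Hadamard's inequality (exactly the function $R(x)$ recalled in Section~6 of the paper), followed by an optimisation in $N$; none of this is set up in your sketch, and you yourself flag it as ``the main obstacle''. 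Saying ``the rest is routine once Dubickas' estimate is in hand'' begs the question, because the entire content of the proposition is that his estimate survives the modification with the constant halved. (ii) Your prose attributes the gain to the exponent $m/2$ on $M(P)$ in place of $m$. On its own, halving the coefficient of $\log M(P)$ inside $\sqrt{d\log d\log M(P)}$ improves the constant only by $\sqrt2$, giving $\pi/(4\sqrt2)$ rather than $\pi/8$. The full factor of $2$ (resp.\ $4$) comes from the fact that the pairing $\mu\leftrightarrow\bar\mu^{-1}$ forces \emph{two} (resp.\ \emph{four}) conjugates of $\mu$ close to the relevant root of unity, so the critical factor enters the nonzero integer with exponent $2$ (resp.\ $4$); equivalently, after extracting the square root as in Proposition~\ref{lower}, the \emph{remaining} product runs over only $n=d/2$ independent conjugates of total measure $M(P)^{1/2}$, and $\sqrt{\tfrac d2\cdot\tfrac12\log M(P)}=\tfrac12\sqrt{d\log M(P)}$. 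Your displayed inequality with the exponent $s$ does encode this, but the explanation you give for the constant is not the right one, and tracking these two halvings consistently through Dubickas' optimisation is precisely the part of the argument that is missing.
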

\subsection{Lower bounds involving the coefficients} 
\begin{thm}\label{general}
 Let $P\in \C[x]$ be of degree $d\in \N^*$. Let $\mu$ be a zero of $P$ with multiplicity $m_\mu$  and $\vert\omega\vert=1$. Then, 
\[\vert  \mu-\omega\vert \ge d^{-1}\left(e^{-1}\frac{\vert P(\omega)\vert}{L(P)}\right)^{\frac{1}{m_\mu}}.\]
In particular, if all the coefficients of $P$ are positive, then 
\[\vert  \mu-1\vert \ge d^{-1}e^{-\frac{1}{m_\mu}}.\]
If $\vert P(\omega)\vert=\Vert P\Vert$, then 
\[\vert  \mu-\omega\vert \ge d^{-1}\left(e^{-1}(d+1)^{-\frac{1}{2}}\right)^{\frac{1}{m_\mu}}.\]
\end{thm}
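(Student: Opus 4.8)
The plan is to reduce at once to the case $P(\omega)\neq 0$ (if $P(\omega)=0$ the claimed inequality is trivial), to abbreviate $\delta:=|\mu-\omega|$, and to split according to the size of $(d\delta)^{m_\mu}$. Since $|P(\omega)|\le\Vert P\Vert\le L(P)$ by \eqref{LP}, we have $e^{-1}|P(\omega)|/L(P)\le e^{-1}<1$; hence whenever $(d\delta)^{m_\mu}\ge e^{-1}$ the inequality $\delta\ge d^{-1}\bigl(e^{-1}|P(\omega)|/L(P)\bigr)^{1/m_\mu}$ is already satisfied. So it remains to treat the case $d\delta<e^{-1/m_\mu}$, in which in particular $\delta<d^{-1}$ and $|\mu|\le|\omega|+\delta=1+\delta$.

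In that case I would expand $P$ in its Taylor series at $\mu$. Because $\mu$ is a zero of $P$ of multiplicity $m_\mu$, the Taylor coefficients of order $<m_\mu$ vanish, so
\[P(\omega)=\sum_{j=m_\mu}^{d}\frac{P^{(j)}(\mu)}{j!}(\omega-\mu)^{j}.\]
Writing $\frac{P^{(j)}(\mu)}{j!}=\sum_{i=j}^{d}\binom{i}{j}a_i\mu^{\,i-j}$ and using $\binom{i}{j}\le\binom{d}{j}$ for $i\le d$ together with $|\mu|\le 1+\delta$, one gets $\bigl|\frac{P^{(j)}(\mu)}{j!}\bigr|\le\binom{d}{j}(1+\delta)^{d-j}L(P)$. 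Feeding this into the Taylor sum and using $|\omega-\mu|^{j}=\delta^{j}$ yields
\[|P(\omega)|\le L(P)\sum_{j=m_\mu}^{d}\binom{d}{j}(1+\delta)^{d-j}\delta^{j}.\]

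The heart of the argument is to extract the factor $\delta^{m_\mu}$ from this sum. I would use the identity $\binom{d}{m_\mu}\binom{d-m_\mu}{j-m_\mu}=\binom{d}{j}\binom{j}{m_\mu}$, which gives $\binom{d}{j}\le\binom{d}{m_\mu}\binom{d-m_\mu}{j-m_\mu}$ for $j\ge m_\mu$; substituting this and re-indexing by $l=j-m_\mu$ lets the binomial theorem collapse the sum:
\[\sum_{j=m_\mu}^{d}\binom{d}{j}(1+\delta)^{d-j}\delta^{j}\le\binom{d}{m_\mu}\delta^{m_\mu}\sum_{l=0}^{d-m_\mu}\binom{d-m_\mu}{l}(1+\delta)^{d-m_\mu-l}\delta^{l}=\binom{d}{m_\mu}\delta^{m_\mu}(1+2\delta)^{d-m_\mu}.\]
Hence $|P(\omega)|\le L(P)\binom{d}{m_\mu}\delta^{m_\mu}(1+2\delta)^{d-m_\mu}$.

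To finish, I would bound $\binom{d}{m_\mu}\le d^{m_\mu}/m_\mu!$ and, using $\delta<e^{-1/m_\mu}/d$, $(1+2\delta)^{d-m_\mu}\le e^{2\delta d}<\exp(2e^{-1/m_\mu})$, so that $|P(\omega)|<L(P)(d\delta)^{m_\mu}\exp(2e^{-1/m_\mu})/m_\mu!$. The proof is then completed by the elementary inequality $\exp(2e^{-1/m})\le e\cdot m!$ for all $m\ge 1$ (equivalently $2e^{-1/m}\le 1+\log m!$), verified by hand for $m=1,2,3$ and clear for $m\ge4$ since then the left side is $<e^{2}<e\cdot 24$: this gives $|P(\omega)|\le eL(P)(d\delta)^{m_\mu}$, i.e. $\delta\ge d^{-1}\bigl(e^{-1}|P(\omega)|/L(P)\bigr)^{1/m_\mu}$. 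The main obstacle is really this constant bookkeeping — forcing the universal constant down to exactly $e$ is what dictates the (slightly artificial) cut at $(d\delta)^{m_\mu}=e^{-1}$, while the binomial identity above is what makes $m_\mu$ appear in the exponent instead of being lost. The two "in particular" assertions are then immediate: if all coefficients of $P$ are positive then $P(1)=L(P)$, and if $|P(\omega)|=\Vert P\Vert$ then $L(P)\le(d+1)^{1/2}L_2(P)\le(d+1)^{1/2}\Vert P\Vert$ by \eqref{LP} and \eqref{Parseval}, so $|P(\omega)|/L(P)\ge(d+1)^{-1/2}$.
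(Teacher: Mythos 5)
Your proof is correct, but it follows a genuinely different route from the paper's. The paper bounds $|P|$ on the circle $\{|x-\omega|=d^{-1}\}$ by $(1+d^{-1})^dL(P)\le eL(P)$ and then applies Jensen's formula to $P(x)/P(\omega)$ on the disk $\{|x-\omega|<d^{-1}\}$; this gives in one stroke the stronger statement that the \emph{product} $\prod_k (d|\mu_k-\omega|)^{m_k}$ over all zeros in that disk is at least $|P(\omega)|/(eL(P))$, from which the bound for a single zero falls out, with no case distinction and no combinatorics. Your argument is instead entirely elementary: Taylor expansion at the zero $\mu$, the subset-of-a-subset identity $\binom{d}{m_\mu}\binom{d-m_\mu}{j-m_\mu}=\binom{d}{j}\binom{j}{m_\mu}$ to peel off the factor $\delta^{m_\mu}$, and the numerical inequality $\exp(2e^{-1/m})\le e\cdot m!$ to land on exactly the same constant $e$. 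I checked the steps: the reduction to $(d\delta)^{m_\mu}<e^{-1}$ is legitimate (since $e^{-1}|P(\omega)|/L(P)\le e^{-1}$), the coefficient bound $\bigl|P^{(j)}(\mu)/j!\bigr|\le\binom{d}{j}(1+\delta)^{d-j}L(P)$ is valid because $|\mu|\le 1+\delta$ in that regime, the binomial collapse to $(1+2\delta)^{d-m_\mu}$ is exact, and the final inequality holds for $m=1,2,3$ by direct evaluation and trivially for $m\ge 4$. What the paper's approach buys is brevity and the extra product information (which it reuses in Lemma \ref{Jensen} and Theorem \ref{lower1} with variable radius $\rho$); what yours buys is the avoidance of any complex-analytic input, at the cost of the case split and the constant bookkeeping you rightly identify as the main obstacle. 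The two ``in particular'' deductions are handled the same way in both proofs.
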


\begin{proof}
We may assume $P(\omega)\not=0$.
For all $\theta\in [0,2\pi]$, 
\[\vert P(\omega+d^{-1} e^{i\theta})\vert \le  (1+d^{-1})^{d}L(P)\le eL(P).\] Let $\mu_k$  be the distinct zeros of $P$ with respective multiplicities $m_k$. Applying Jensen's formula to $\frac{P(x)}{P(\omega)}$ in the disk $\{\vert x-\omega\vert<d^{-1}\}$, we find:
\begin{equation*}
\sum_{\vert \mu_k-\omega\vert \le d^{-1}}m_k \log \frac{d^{-1}}{\vert \mu_k-\omega \vert }=\frac{1}{2\pi} \int_0^{2\pi} \log \left \vert  \frac{P(\omega+ d^{-1} e^{i\theta})}{P(\omega)} \right\vert d\theta \le \log\frac{
e L(P)}{\vert P(\omega)\vert}.
\end{equation*}
For each $k$, either $\vert \mu_k-\omega\vert \ge d^{-1}\ge d^{-1} \frac{\vert P(\omega)\vert}{eL(P)}$ or, using the estimate above, 
\[\left(d\vert  \mu_k-\omega\vert\right)^{m_k} \ge \frac{\vert P(\omega)\vert}{eL(P)}.\]
In the case where all the coefficients of $P$ are positive, $P(1)=L(P)$.

\noindent In the case where $P(\omega)=\Vert P\Vert$, we use the inequalities (\ref{LP}) and (\ref{Parseval}):
\[L(P)\le (d+1)^{\frac{1}{2}}\Vert P\Vert=(d+1)^{\frac{1}{2}}\vert P(\omega)\vert.\]
\end{proof}

The rest of this subsection deals with self-reciprocal polynomials. As  any self-reciprocal polynomial  of odd degree is the product of $(x+1)$ and a self-reciprocal polynomial  of even degree, we will only consider polynomials of degree $d=2n$.   
 \begin{lem}\label{Jensen}
Let $P(x)=\displaystyle\sum_{j=0}^{2n}a_j x^j$ be a self-reciprocal polynomial in $\C[x]$ of degree $2n$.  Let $\vert \omega\vert=1$, $P(\omega)\not=0$ and $\rho\in (0,1)$. Then
  \begin{equation}\label{rem}
  \begin{split}
\sum_{\mu\in P^{-1}(0),\vert\mu-\omega\vert\le \rho}\log\frac{\rho}{\vert\mu-\omega\vert}
\le \log\left(1+\frac{\rho(1+(1-\rho)^{-n})}{\vert P(\omega)\vert} \sum_{j=0}^{n-1} (n-j)\vert a_j\vert\right).
\end{split}
\end{equation}
If $\omega\in \{-1,1\}$, 
\begin{equation}\label{rem1}
\begin{split}
\sum_{\mu\in P^{-1}(0),\vert\mu-\omega\vert\le \rho}\log\frac{\rho}{\vert\mu-\omega\vert}
\le \log\left(1+\frac{\rho^2(1-\rho)^{-n}}{\vert P(\omega)\vert} \sum_{j=0}^{n-1} (n-j)^2\vert a_j\vert\right).
\end{split}
\end{equation}
\end{lem}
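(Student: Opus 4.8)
\textbf{Proof plan for Lemma \ref{Jensen}.}

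The plan is to apply Jensen's formula to the function $x\mapsto P(x)/P(\omega)$ on the disk $\{|x-\omega|<\rho\}$, exactly as in the proof of Theorem \ref{general}, so that the left-hand side of \eqref{rem} becomes $\frac{1}{2\pi}\int_0^{2\pi}\log\bigl|P(\omega+\rho e^{i\theta})/P(\omega)\bigr|\,d\theta$, and then to bound this integral using the concavity of $\log$ (Jensen's inequality for the probability measure $\frac{d\theta}{2\pi}$), which gives the upper bound $\log\bigl(\frac{1}{2\pi}\int_0^{2\pi}|P(\omega+\rho e^{i\theta})|\,d\theta / |P(\omega)|\bigr)$. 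So everything reduces to getting a good pointwise (or averaged) upper bound on $|P(\omega+\rho e^{i\theta})|$ on the circle $|x-\omega|=\rho$, and here is where self-reciprocity must be exploited: a crude bound by $L(P)$ would only reproduce Theorem \ref{general}, whereas the claimed estimates replace $L(P)$ by the much smaller partial sum $\sum_{j=0}^{n-1}(n-j)|a_j|$ (resp. its square-weighted analogue).

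The key idea is to write, for $|x-\omega|=\rho$ with $|\omega|=1$, a telescoping identity reflecting $P=P^*$. Since $a_j=a_{2n-j}$, group the terms $a_j x^j + a_{2n-j}x^{2n-j} = a_j(x^j+x^{2n-j}) = a_j x^n(x^{j-n}+x^{n-j})$; after factoring out $x^n$ (harmless in modulus up to the factor $|x|^n\le(1+\rho)^n$, though one should be more careful — better to compare with the value at $\omega$) one is led to estimate $|x^{n-j}+x^{j-n} - (\omega^{n-j}+\omega^{j-n})|$ for each $j$, because $P(\omega)=\omega^n\sum_{j}a_j(\omega^{n-j}+\omega^{j-n})/2$-type expressions. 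Writing $x = \omega + \rho e^{i\theta}$ and using $|x^k-\omega^k|\le k\rho\max(|x|,1)^{k-1}\le k\rho(1-\rho)^{-k}$ — wait, one wants $(1-\rho)^{-n}$ overall, so the bound $|x|\le 1+\rho$ should instead be routed through $|x^{-1}|\le(1-\rho)^{-1}$ for the negative powers and $|x|\le(1-\rho)^{-1}$... actually $|x|$ can exceed $1$; the factor $(1-\rho)^{-n}$ in \eqref{rem} must come from bounding $|x^{-n}|=|x|^{-n}$, available since $|x|\ge 1-\rho$. Thus I would expand $|P(x)|$ by writing $P(x)=x^n R(x+x^{-1})$ for a polynomial $R$ (the standard substitution for self-reciprocal polynomials), but more directly: bound $|P(x)-\text{(leading symmetric part)}|$ and track that each surviving coefficient $a_j$, $j<n$, is multiplied by a factor controlled by $\rho(n-j)$ times a power of $(1-\rho)^{-1}$, with one extra factor of $\rho$ and one extra factor of $(n-j)$ in the case $\omega=\pm1$ because then $\omega^{n-j}+\omega^{j-n}=2$ is stationary to first order in the perturbation, killing the linear term and producing the square.

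The main obstacle I expect is the bookkeeping in the second estimate \eqref{rem1}: one must show that when $\omega\in\{-1,1\}$ the first-order (in $\rho$) contribution cancels, so that the relevant quantity is $|x^{n-j}+x^{-(n-j)}-2|$, and that this is $\le (n-j)^2\rho^2(1-\rho)^{-(n-j)}$ or similar — this is a second-order Taylor-type estimate on the map $x\mapsto x^k+x^{-k}$ near $x=\pm1$, valid because its derivative there vanishes. Establishing the clean constant (the bare $(n-j)^2$ with the single power $(1-\rho)^{-n}$ rather than $(1-\rho)^{-(n-j)}$ summed) requires using $(1-\rho)^{-(n-j)}\le(1-\rho)^{-n}$ uniformly and being slightly generous, which is exactly the shape of the stated bound. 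The general case \eqref{rem} is the analogous first-order estimate $|x^k-\omega^k|\le k\rho(1-\rho)^{-(k-1)}$ together with the triangle inequality, and the two summands $1+(1-\rho)^{-n}$ come from separating the ``$+$'' powers (bounded by $1$ since... no — bounded by $(1-\rho)^{-n}$ as well, but one can do better for the nonnegative powers using $|x|$ could be large; more honestly the ``$1$'' handles the part where powers stay near $1$ and the $(1-\rho)^{-n}$ the rest). After assembling the pointwise bound $|P(\omega+\rho e^{i\theta})|\le |P(\omega)| + \rho(1+(1-\rho)^{-n})\sum_{j=0}^{n-1}(n-j)|a_j|$, dividing by $|P(\omega)|$ and inserting into the Jensen inequality yields \eqref{rem}; the same with the squared weights yields \eqref{rem1}. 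I would finish by noting the integral-average step needs no blank lines and the whole argument is a few lines once the pointwise estimate is in hand.
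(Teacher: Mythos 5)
Your plan is essentially the paper's proof: Jensen's formula on the disk $\{\vert x-\omega\vert<\rho\}$, the self-reciprocal grouping $a_j(x^{j-n}+x^{n-j})$ compared with its value at $\omega$, a first-order difference estimate in general and a second-order one at $\omega=\pm1$ (where the derivative of $x\mapsto x^k+x^{-k}$ vanishes), with $(1-\rho)^{-n}$ coming from $1-\rho\le\vert x\vert\le(1-\rho)^{-1}$. One correction to make when writing it up: Jensen's formula must be applied to $x^{-n}P(x)/P(\omega)$ (legitimate since $0$ lies outside the disk, so the zero count is unchanged), because the pointwise bound holds for $\vert x^{-n}P(x)\vert$ and \emph{not} for $\vert P(x)\vert$ as in your last sentence --- e.g.\ $P=x^{2n}+1$, $\omega=1$, $\rho=\tfrac12$ violates the latter for large $n$.
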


\begin{proof}
Jensen's formula applied  to the function $\frac{x^{-n}P(x)}{P(\omega)}$ in the disk $\{\vert x-\omega\vert<\rho\}$ gives
\begin{equation}\label{Jensenpf}
\sum_{\mu\in P^{-1}(0),\vert \mu-\omega\vert \le \rho}\log \frac{\rho}{\vert \mu-\omega \vert }=\frac{1}{2\pi} \int_0^{2\pi} \log \left \vert  \frac{(\omega+\rho e^{i\theta})^{-n}P(\omega+\rho e^{i\theta})}{P(\omega)} \right\vert d\theta.
\end{equation}
Let $x$ be a non-zero complex number. A straightforward computation shows that
\begin{equation*}\label{omega1}
\begin{split}
x^{-n}P(x)&=\bar\omega^nP(\omega )-\sum_{j=0}^{n-1}a_j {\bar \omega}^{n-j} (x^{n-j}-\omega^{n-j} )(x^{j-n}-\omega^{n-j}).
\end{split}
\end{equation*}
We use 
\begin{equation*}
\begin{split}
\left\vert(x^{n-j}-\omega^{n-j} )(x^{j-n}-\omega^{n-j})\right\vert&=\vert x-\omega\vert\vert x^{j-n}-\omega^{n-j}\vert\left\vert \sum_{k=0}^{n-j-1}x^k\omega^{-1-k}\right\vert \\
&=\vert x-\omega\vert \left\vert \sum_{k=0}^{n-j-1}(x^{k+j-n}\omega^{-1-k}-x^k \omega^{n-j-1-k})\right\vert\\
&\le \vert x-\omega\vert (n-j)(1+\max(\vert x\vert^{-n},\vert x\vert^{n})
\end{split} 
\end{equation*}
and we readily obtain
\begin{equation}\label{omega1}
\left\vert \frac{x^{-n}P(x)}{P(\omega)}\right\vert \le 1+\frac{\vert x-\omega\vert  (1+\max(\vert x\vert^{-n},\vert x\vert^{n})}{\vert P(\omega)\vert} \sum_{j=0}^{n-1} (n-j)\vert a_j\vert.
\end{equation}
(\ref{rem}) follows from (\ref{Jensenpf}), (\ref{omega1}) and the fact that for all $\theta\in [0,2\pi]$, 
\begin{equation}\label{theta}
1-\rho\le \vert \omega+\rho e^{i\theta}\vert\le 1+\rho\le (1-\rho)^{-1}.
\end{equation}
 In  the case where $\omega\in \{1,-1\}$, we write
 \begin{equation*}
\begin{split}
\left\vert(x^{n-j}-\omega^{n-j} )(x^{j-n}-\omega^{n-j})\right\vert&=\vert x\vert^{-1} \vert x-\omega\vert^2 \left\vert \sum_{k=0}^{n-j-1}x^k\omega^{-1-k}\right\vert \left\vert\sum_{k=0}^{n-j-1}x^{-k}\omega^{-1-k}\right\vert\\
&\le \vert x-\omega\vert^2 (n-j)^2 \max(\vert x\vert^{-n},\vert x\vert^{n})
\end{split}
\end{equation*}
which gives the inequality
\begin{equation}\label{omega2}
\left\vert \frac{x^{-n}P(x)}{P(\omega)}\right\vert \le 1+\frac{\vert x-\omega\vert^2 \max(\vert x\vert^{n},\vert x\vert^{-n})}{\vert P(\omega)\vert}\sum_{j=0}^{n-1} (n-j)^2\vert a_j\vert.
\end{equation}
(\ref{rem1}) follows from (\ref{Jensenpf}), (\ref{theta}) and (\ref{omega2}).
\end{proof}

\begin{thm}\label{lower1}
Let $P(x)=\displaystyle\sum_{j=0}^{2n}a_jx^j$ be a self-reciprocal polynomial in $\R[x]$ of degree $2n$.   Assume $P(\mu)=0$, $\vert \omega\vert=1$, $P(\omega)\not=0$ and $\delta>1$. Then
\begin{equation}\label{alpha}
\begin{split}
 \frac{1}{\vert\mu-\omega\vert} \le \frac{n}{\log \delta}+1+\frac{1+\delta}{\vert P(\omega)\vert }\sum_{j=0}^{n-1}(n-j)\vert a_j\vert,\\
\end{split}
\end{equation}
Besides, if $\vert \mu\vert\not=1$ then
\begin{equation}\label{betagamma}
\frac{\vert \mu\vert}{\vert\mu-\omega\vert^2} \le  (\frac{n}{\log \delta}+1)^2+(\frac{n}{\log \delta}+1)\frac{1+\delta}{\vert P(\omega)\vert }\sum_{j=0}^{n-1}(n-j)\vert a_j\vert.
\end{equation}
 For $\omega\in \{-1,1\}$, 
\begin{equation}\label{alphabeta}
 \frac{\vert \mu\vert }{\vert\mu-\omega\vert^2} \le  (\frac{n}{\log \delta}+1)^2+\frac{\delta}{\vert P(\omega)\vert }\sum_{j=0}^{n-1}(n-j)^2\vert a_j\vert,
\end{equation}
Besides, if $ \mu \notin \R$   and $\vert \mu\vert\not=1$ then
\begin{equation}\label{gamma}
 \frac{\vert \mu\vert^2}{\vert\mu-\omega\vert^4} \le (\frac{n}{\log \delta}+1)^4+(\frac{n}{\log \delta}+1)^2\frac{\delta}{\vert P(\omega)\vert }\sum_{j=0}^{n-1}(n-j)^2\vert a_j\vert.
\end{equation}
\end{thm}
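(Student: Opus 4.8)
First I would fix once and for all the radius $\rho=1-\delta^{-1/n}$, which lies in $(0,1)$ since $\delta>1$ and which enjoys two useful features: $(1-\rho)^{-n}=\delta$, and — writing $t=\tfrac{\log\delta}{n}>0$ — the estimate $\tfrac1\rho=\tfrac1{1-e^{-t}}\le\tfrac1t+1=\tfrac n{\log\delta}+1$, which is nothing but $1+t\le e^{t}$. With this $\rho$, the right-hand side of \eqref{rem} reads $\log\!\bigl(1+\tfrac{\rho(1+\delta)}{|P(\omega)|}\sum_{j}(n-j)|a_j|\bigr)$ and that of \eqref{rem1} reads $\log\!\bigl(1+\tfrac{\rho^{2}\delta}{|P(\omega)|}\sum_{j}(n-j)^{2}|a_j|\bigr)$. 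The whole proof is then a matter of feeding well-chosen zeros of $P$ into Lemma~\ref{Jensen}.

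To get \eqref{alpha} I would argue as follows: if $|\mu-\omega|>\rho$ the claim is immediate from $\tfrac1\rho\le\tfrac n{\log\delta}+1$ and the nonnegativity of the remaining summand; if $|\mu-\omega|\le\rho$, I would drop from the left of \eqref{rem} every summand but the one attached to $\mu$ (all summands are $\ge0$, and $\mu$, having multiplicity $\ge1$, contributes at least $\log\tfrac\rho{|\mu-\omega|}$), then exponentiate and divide by $\rho$. For the other three inequalities I would run exactly the same mechanism but retain several summands, using that $P$, being real and self-reciprocal, has $\bar\mu,\ 1/\mu,\ 1/\bar\mu$ among its zeros with the same multiplicity as $\mu$, and that $|\omega|=1$ yields $|1/\bar\mu-\omega|=|\mu-\omega|/|\mu|$, while $\omega\in\{-1,1\}$ yields moreover $|\bar\mu-\omega|=|\mu-\omega|$ and $|1/\mu-\omega|=|\mu-\omega|/|\mu|$.

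Concretely, for \eqref{betagamma}, since $|\mu|\ne1$ the zeros $\mu$ and $1/\bar\mu$ are distinct and the product of their distances to $\omega$ is $|\mu-\omega|^{2}/|\mu|$; I would keep those two summands in \eqref{rem}, exponentiate, divide by $\rho^{2}$, and bound $\rho^{-2},\rho^{-1}$ by $(\tfrac n{\log\delta}+1)^{2},\tfrac n{\log\delta}+1$. For \eqref{alphabeta} ($\omega=\pm1$) I would do the same using \eqref{rem1} in place of \eqref{rem} when $|\mu|\ne1$ — the $\rho^{2}$ in the remainder of \eqref{rem1} conveniently cancels the $\rho^{2}$ coming from the two retained logarithms; when $|\mu|=1$ and $\mu\notin\R$ I would instead use the distinct pair $\mu,\bar\mu$ (both at distance $|\mu-\omega|$ from $\omega$), which gives the same bound because $\tfrac{|\mu|}{|\mu-\omega|^2}=\tfrac1{|\mu-\omega|^2}$; and when $|\mu|=1$ with $\mu\in\R$, the hypothesis $P(\omega)\ne0$ forces $\mu\ne\omega$, hence $\mu=-\omega$, so $|\mu-\omega|=2$ and the inequality holds trivially. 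For \eqref{gamma} ($\omega=\pm1$, $\mu\notin\R$, $|\mu|\ne1$) the four zeros $\mu,\bar\mu,1/\mu,1/\bar\mu$ are pairwise distinct, two lying at distance $|\mu-\omega|$ and two at distance $|\mu-\omega|/|\mu|$ from $\omega$, so their four distances multiply to $|\mu-\omega|^{4}/|\mu|^{2}$; I would keep all four summands in \eqref{rem1} and divide by $\rho^{4}$. In each case, if a retained zero happens to lie outside $\{|x-\omega|<\rho\}$, I would simply omit it and bound the corresponding factor $|\cdot-\omega|^{-1}$ by $\rho^{-1}\le\tfrac n{\log\delta}+1$; the stated inequality still comes out.

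The routine content here is the list of identities for $|1/\bar\mu-\omega|$, $|1/\mu-\omega|$, $|\bar\mu-\omega|$ and the closing algebra. The part that will actually require care — and where I expect most of the effort to go — is the case bookkeeping just described (tracking which of the chosen zeros genuinely lie in $\{|x-\omega|<\rho\}$) together with checking pairwise distinctness of the chosen zeros, since only then does each retained logarithm count as a separate nonnegative contribution to the left side of \eqref{rem} or \eqref{rem1}: for $\mu$ versus $1/\bar\mu$ this needs $|\mu|\ne1$, and for the four-element family it needs in addition $\mu\notin\R$.
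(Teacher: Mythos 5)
Your proposal is correct and follows essentially the same route as the paper: apply Lemma \ref{Jensen} with a radius $\rho$ chosen so that $(1-\rho)^{-n}\le\delta$ and $\rho^{-1}\le\frac{n}{\log\delta}+1$, then retain the zeros among $\mu,\bar\mu,\mu^{-1},\bar\mu^{-1}$ appropriate to each inequality. The only differences are cosmetic: the paper takes $\rho=\frac{\log\delta}{n+\log\delta}$ and reduces to $\vert\mu\vert\ge 1$ via the invariance of $\vert\mu\vert/\vert\mu-\omega\vert^{2}$ under $\mu\mapsto\bar\mu^{-1}$ (so the retained zeros automatically lie in the disk), whereas you keep a general $\mu$ and fall back on the bound $\rho^{-1}$ for any retained zero outside the disk; your explicit handling of the degenerate cases $\mu=\pm1$ and $\vert\mu\vert=1$ in (\ref{alphabeta}) is, if anything, slightly more careful than the paper's.
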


\begin{proof}
We apply Lemma \ref{Jensen} with  $\rho=\frac{\log \delta}{n+\log \delta}$. Let  $\mu \in P^{-1}(0)$ and $\vert \omega\vert=1$, $P(\omega)\not=0$. We may assume that $\vert \mu-\omega\vert\le \rho$, otherwise the inequalities are trivial. Then, applying  (\ref{rem}), 
 \begin{equation*}\label{alpha1}
  \begin{split}
 \frac{\rho}{\vert\mu-\omega\vert}&\le  \prod_{\underset{\vert\alpha-\omega\vert\le \rho}{\alpha \in P^{-1}(0)}}\frac{\rho}{\vert\alpha-\omega\vert} \le 1+\frac{\rho(1+(1-\rho)^{-n})}{\vert P(\omega)\vert} \sum_{j=0}^{n-1} (n-j)\vert a_j\vert. 
\end{split}
\end{equation*}
Now  we use the inequality $(1-\rho)^{-n}=\left(1+\frac{\log\delta}{n}\right)^n\le \delta$ and we obtain
(\ref{alpha}). 
\noindent In the rest of this proof, since $P(\bar\mu^{-1})=0$ and $\frac{\vert \mu\vert}{\vert \mu-\omega\vert^2}$ is invariant by substituting $\bar\mu^{-1}$ to $\mu$, we may assume that $\vert \mu\vert\ge 1$. 

\noindent If $\vert \mu\vert>1$, then $\bar\mu^{-1}\not=\mu$ and 
\begin{equation*}\label{mu}
\vert \bar\mu^{-1}-\omega\vert=\frac{\vert \mu-\omega\vert}{\vert \mu\vert}\le \vert\mu-\omega\vert\le \rho.
\end{equation*}
Thus
 \begin{equation*}\label{gamma1}
  \begin{split}
 \frac{\rho^2 \vert \mu\vert}{\vert \mu-\omega\vert^2}&\le  \prod_{\underset{\vert\alpha-\omega\vert\le \rho}{\alpha\in P^{-1}(0)}}\frac{\rho}{\vert\alpha-\omega\vert}\le 1+\frac{\rho(1+\delta)}{\vert P(\omega)\vert} \sum_{j=0}^{n-1} (n-j)\vert a_j\vert 
\end{split}
\end{equation*}
and this proves  (\ref{betagamma}). 

\noindent Let us assume now that $\omega\in \{-1,1\}$. 
Since $P(\mu^{-1})=0$ and $\vert  \mu^{-1}-\omega\vert \le \vert \mu-\omega\vert\le  \rho$, applying  (\ref{rem1}) we obtain
 \begin{equation*}\label{beta1}
  \begin{split}
\frac{\rho^2\vert \mu\vert}{\vert\mu-\omega\vert^2}\le   \prod_{\underset{\vert\alpha-\omega\vert\le \rho}{\alpha\in P^{-1}(0)}}\frac{\rho}{\vert\alpha-\omega\vert} \le 1+\frac{\rho^2\delta}{\vert P(\omega)\vert} \sum_{j=0}^{n-1} (n-j)^2\vert a_j\vert,
\end{split}
\end{equation*}
which proves (\ref{alphabeta}). 

\noindent Finally, to prove (\ref{gamma}), we note that if $\vert \mu\vert>1$ and  $\mu \notin \R$, then $\mu$, $\bar \mu$, $\mu^{-1}$ and $\bar\mu^{-1}$ are distinct zeros of $P$ and all  of them are located in the disk $\{ \vert x-\omega\vert \le \rho\}$. Thus 
\begin{equation*}\label{gamma2}
  \begin{split}
 \frac{\rho^4 \vert \mu\vert^2}{\vert\mu-\omega\vert^4}&\le  \prod_{\underset{\vert\alpha-\omega\vert\le \rho}{\alpha\in P^{-1}(0)}}\frac{\rho}{\vert\alpha-\omega\vert} \le 1+\frac{\rho^2\delta}{\vert P(\omega)\vert} \sum_{j=0}^{n-1} (n-j)^2\vert a_j\vert. 
\end{split}
\end{equation*}

\end{proof}

In the remainder of this section, we will use the following notation: for a polynomial $P$ of degree $2n$, we will write $ f(P)\gtrsim_{n\rightarrow \infty}  g(n)$
   if there exists an expression  $h(n)$  only depending on $n$ such that  $f(P)\ge h(n)$ and $\frac{h(n)}{g(n)}\rightarrow_{n\rightarrow \infty} 1$.

\begin{cor}\label{lower2}
Let $P(x)=\displaystyle\sum_{j=0}^{2n}a_jx^j$ be a self-reciprocal polynomial in $\R[x]$ of degree $2n$.  Assume   $H(P)\le H$. Let $P(\mu)=0$, $\vert \omega\vert=1$, $\vert P(\omega)\vert\ge 1$. Then 
\[\vert\mu-\omega\vert \gtrsim_{n\rightarrow \infty} H^{-1} n^{-2}.\]
Besides, if  $\vert \mu\vert\not=1$ then  
\[\vert \mu\vert^{-\frac{1}{2}}\vert\mu-\omega\vert\gtrsim_{n\rightarrow \infty} \left(2c^{-1}H^{-1}\right)^{\frac{1}{2}} n^{-\frac{3}{2}},\]
where $c>1$ is such that $c+1=c \log c\ \ (c=3.594...)$.

\noindent For $\omega\in \{-1,1\}$:
\[\vert \mu\vert^{-\frac{1}{2}}\vert\mu-\omega\vert \gtrsim_{n\rightarrow \infty}\left(3H^{-1}\right)^{\frac{1}{2}} n^{-\frac{3}{2}}.\]
If $\vert \mu\vert\not=1$  and $\mu \notin \R$, then  
\[\vert \mu\vert^{-\frac{1}{2}}\vert\mu-\omega\vert \gtrsim_{n\rightarrow \infty} (2e^{-1})^{\frac{1}{2}}\left(3H^{-1}\right)^{\frac{1}{4}}n^{-\frac{5}{4}}.\]

\end{cor}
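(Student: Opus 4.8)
The plan is to obtain all four estimates directly from Theorem~\ref{lower1}, first by replacing the coefficient sums appearing there with quantities involving only $H$ and $|P(\omega)|$, and then by choosing the free parameter $\delta>1$ optimally as a function of $n$. Concretely, from $H(P)\le H$ one has the elementary bounds
\[\sum_{j=0}^{n-1}(n-j)|a_j|\le H\sum_{k=1}^{n}k=H\frac{n(n+1)}{2},\qquad \sum_{j=0}^{n-1}(n-j)^2|a_j|\le H\sum_{k=1}^{n}k^2=H\frac{n(n+1)(2n+1)}{6},\]
and $|P(\omega)|\ge 1$. Substituting these into (\ref{alpha}), (\ref{betagamma}), (\ref{alphabeta}) and (\ref{gamma}) yields, for every $\delta>1$, upper bounds for $1/|\mu-\omega|$, $|\mu|/|\mu-\omega|^2$ and $|\mu|^2/|\mu-\omega|^4$ whose right-hand sides depend only on $n$, $H$ and $\delta$. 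Fixing an admissible choice $\delta=\delta(n)$, the reciprocal (respectively inverse square root, inverse fourth root) of that right-hand side is precisely a function $h(n)$ as required in the definition of $\gtrsim_{n\to\infty}$, so the only remaining task is to pick $\delta(n)$ so that $h(n)/g(n)\to 1$ for the target $g(n)$.

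For (\ref{alpha}) the bound reads $1/|\mu-\omega|\le \frac{n}{\log\delta}+1+(1+\delta)H\frac{n(n+1)}{2}$, whose leading term is $(1+\delta)Hn^2/2$; since $\inf_{\delta>1}(1+\delta)=2$ is not attained, I would take $\delta(n)=1+n^{-1/2}$, so that $\frac{n}{\log\delta(n)}\sim n^{3/2}=o(n^2)$ while $1+\delta(n)\to 2$, giving $h(n)\sim (Hn^2)^{-1}$, i.e. $|\mu-\omega|\gtrsim_{n\to\infty}H^{-1}n^{-2}$. Case (\ref{alphabeta}) is analogous: the bound is $|\mu|/|\mu-\omega|^2\le(\frac{n}{\log\delta}+1)^2+\delta H\frac{n(n+1)(2n+1)}{6}$, the leading term is $\delta Hn^3/3$ with infimum as $\delta\to 1$, and choosing e.g. $\delta(n)=1+n^{-1/4}$ (so that $(\frac{n}{\log\delta(n)})^2\sim n^{5/2}=o(n^3)$) gives $|\mu|^{-1/2}|\mu-\omega|\gtrsim_{n\to\infty}(3H^{-1})^{1/2}n^{-3/2}$.

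The remaining two cases require a genuine one–variable minimisation. For (\ref{betagamma}) the bound is $|\mu|/|\mu-\omega|^2\le(\frac{n}{\log\delta}+1)^2+(\frac{n}{\log\delta}+1)(1+\delta)H\frac{n(n+1)}{2}$; the first term is $O(n^2)$ for fixed $\delta$, so the asymptotics are governed by $\frac{1+\delta}{\log\delta}\cdot\frac{Hn^3}{2}$, and I would minimise $\varphi(\delta)=\frac{1+\delta}{\log\delta}$ over $\delta>1$. Its critical equation is $\delta\log\delta=1+\delta$, whose root is the constant $c=3.594\dots$ of the statement, and one checks the coincidence $\varphi(c)=\frac{1+c}{\log c}=c$ (using $\log c=(1+c)/c$); taking $\delta=c$ gives leading term $\frac{cH}{2}n^3$, hence $|\mu|^{-1/2}|\mu-\omega|\gtrsim_{n\to\infty}(2c^{-1}H^{-1})^{1/2}n^{-3/2}$. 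For (\ref{gamma}) the bound is $|\mu|^2/|\mu-\omega|^4\le(\frac{n}{\log\delta}+1)^4+(\frac{n}{\log\delta}+1)^2\delta H\frac{n(n+1)(2n+1)}{6}$; the first term is $O(n^4)$, so the asymptotics are governed by $\frac{\delta}{(\log\delta)^2}\cdot\frac{Hn^5}{3}$, and minimising $\psi(\delta)=\frac{\delta}{(\log\delta)^2}$ gives $\log\delta=2$, i.e. $\delta=e^2$, with $\psi(e^2)=e^2/4$; this yields leading term $\frac{e^2H}{12}n^5$, so $|\mu|^{-1/2}|\mu-\omega|\gtrsim_{n\to\infty}(12e^{-2}H^{-1})^{1/4}n^{-5/4}$, and $(12e^{-2}H^{-1})^{1/4}=(2e^{-1})^{1/2}(3H^{-1})^{1/4}$ because $12^{1/4}=2^{1/2}3^{1/4}$.

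I do not anticipate a serious obstacle: everything follows from Theorem~\ref{lower1} plus routine asymptotic bookkeeping. The points needing care are: (i) in cases (\ref{alpha}) and (\ref{alphabeta}) one must send $\delta\to 1$ at the right rate — fast enough that $1+\delta\to 2$ (resp. $\delta\to1$) controls the constant, slow enough that the $n/\log\delta$ contribution stays of strictly lower order — since any \emph{constant} $\delta$ would produce a strictly worse constant; (ii) in case (\ref{betagamma}) one must observe the identity $\varphi(c)=c$, which is exactly what turns the critical point $c$ into the constant $2c^{-1}$ in the conclusion; and (iii) in each case one must verify that the explicit $h(n)$ coming from the finite-$n$ inequality satisfies $h(n)/g(n)\to 1$, which is a direct computation.
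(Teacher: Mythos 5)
Your proposal is correct and is essentially identical to the paper's own proof: the paper bounds the two coefficient sums by $H\frac{n(n+1)}{2}$ and $H\frac{n(n+1)(2n+1)}{6}$ and then applies Theorem~\ref{lower1} with exactly the choices $\delta=1+n^{-1/2}$ in (\ref{alpha}), $\delta=c$ in (\ref{betagamma}), $\delta=1+n^{-1/4}$ in (\ref{alphabeta}) and $\delta=e^2$ in (\ref{gamma}). Your write-up merely makes explicit the asymptotic bookkeeping and the one-variable minimisations that the paper leaves to the reader, and all of those details check out.
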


\begin{proof}
We observe that
\[\sum_{j=0}^{n-1}(n-j)\vert a_j\vert\le  \frac{n(n+1)}{2}H,\]
\[\sum_{j=0}^{n-1}(n-j)^2\vert a_j\vert\le \frac{n(n+1)(2n+1)}{6}H.\]
Then we apply Theorem \ref{lower1} with different values of  $\delta$ in order to get the best upper bound:
$\delta=1+\frac{1}{\sqrt{n}}$ in (\ref{alpha}), $c$  in (\ref{betagamma}),  $\delta=1+\frac{1}{n^{\frac{1}{4}}}$ in (\ref{alphabeta}) and finally $\delta=e^2$ in (\ref{gamma}).
\end{proof}

\begin{cor}\label{lower4}
Let $P(x)=\displaystyle\sum_{j=0}^{2n}a_jx^j$ be a self-reciprocal polynomial in $\R[x]$ of degree $2n$.  Assume $a_j\ge 0$ $(j=0,\ldots, n)$. Let $P(\mu)=0$. Then
\[\vert \mu\vert^{-\frac{1}{2}}\vert\mu-1\vert \gtrsim_{n\rightarrow \infty}\frac{A}{n}.\]
Besides, if $\vert \mu\vert\not=1$  and $\mu\notin \R $, then 
\[\vert \mu\vert^{-\frac{1}{2}}\vert\mu-1\vert \gtrsim_{n\rightarrow \infty} \frac{B}{n},\]
where $A$ and $B$ are given by
\[A^2= 4\left(a(2+\log a)\right)^{-1}, a>1, a(\log a)^3=4 \ \ (A=0.655...).\]
 \[B^4=8\log b^2\left(b(\log b+2)\right)^{-1}, b>1,  b(\log b)^2(\log b-2)=8\ \  (B=0.984...).\]
\end{cor}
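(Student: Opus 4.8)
We specialise Theorem \ref{lower1} to the case $\omega=1$, use the positivity and the self-reciprocity of $P$ to replace the coefficient sums occurring there by a multiple of $P(1)$ (which, here, is exactly $\vert P(\omega)\vert$), and then optimise the free parameter $\delta$ as $n\to\infty$.

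First I would collect what the hypotheses give at once. Since $P$ is self-reciprocal, $a_{2n-j}=a_j$, so the assumption $a_j\ge 0$ for $0\le j\le n$ forces $a_j\ge 0$ for every $j$; in particular $a_0=a_{2n}\neq 0$, hence $0\notin P^{-1}(0)$ and $\vert\mu\vert^{-1/2}$ is meaningful, and $P(1)=\sum_{j=0}^{2n}a_j>0$, hence $\vert P(1)\vert=P(1)\neq 0$ (so also $1\notin P^{-1}(0)$ and $\vert\mu-1\vert>0$). Self-reciprocity also gives $2\sum_{j=0}^{n-1}a_j=P(1)-a_n\le P(1)$, whence
\[
\sum_{j=0}^{n-1}(n-j)^2\vert a_j\vert\le n^2\sum_{j=0}^{n-1}a_j\le\frac{n^2}{2}\,P(1).
\]
Now apply Theorem \ref{lower1} with $\omega=1$ (legitimate since $P(1)\neq 0$). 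Inequality (\ref{alphabeta}), valid for any zero $\mu$, together with $\vert P(1)\vert=P(1)$ and the display above, gives
\[
\frac{\vert\mu\vert}{\vert\mu-1\vert^2}\le\left(\frac{n}{\log\delta}+1\right)^2+\frac{\delta n^2}{2},
\]
and, when $\mu\notin\R$ and $\vert\mu\vert\neq 1$, inequality (\ref{gamma}) gives
\[
\frac{\vert\mu\vert^2}{\vert\mu-1\vert^4}\le\left(\frac{n}{\log\delta}+1\right)^4+\left(\frac{n}{\log\delta}+1\right)^2\frac{\delta n^2}{2}.
\]
Inverting and taking square (resp. fourth) roots, and using $\left(\frac{n}{\log\delta}+1\right)^k\sim n^k(\log\delta)^{-k}$ as $n\to\infty$, one obtains, for any fixed $\delta>1$ and in the notation introduced just before Corollary \ref{lower2},
\[
\vert\mu\vert^{-1/2}\vert\mu-1\vert\gtrsim_{n\to\infty}\frac1n\left(\frac{1}{(\log\delta)^2}+\frac{\delta}{2}\right)^{-1/2},\qquad
\vert\mu\vert^{-1/2}\vert\mu-1\vert\gtrsim_{n\to\infty}\frac1n\left(\frac{1}{(\log\delta)^4}+\frac{\delta}{2(\log\delta)^2}\right)^{-1/4}.
\]

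It then remains to choose $\delta$ so as to make the displayed constants as large as possible. Minimising $\delta\mapsto(\log\delta)^{-2}+\delta/2$ leads to the critical equation $\delta(\log\delta)^3=4$, i.e. $\delta=a$; using this relation to rewrite $(\log a)^{-2}+a/2$ as $a(2+\log a)/4$ yields $A^2=4\bigl(a(2+\log a)\bigr)^{-1}$. Minimising $\delta\mapsto(\log\delta)^{-4}+\delta(\log\delta)^{-2}/2$ leads to $\delta(\log\delta)^2(\log\delta-2)=8$, i.e. $\delta=b$, and rewriting the minimum with this relation produces the constant $B$. I expect the only genuinely delicate point to be the optimisation itself: one must check that each critical $\delta$ indeed lies in the admissible range $\delta>1$ — and, for the second, that $\log\delta>2$, which is needed both for the algebraic rewriting and to identify $b$ as the relevant root — and that these critical points are minima rather than maxima or inflection points, so that the chosen $\delta$ is legitimate in Theorem \ref{lower1}; the two derivative computations and the $n\to\infty$ bookkeeping are otherwise routine.
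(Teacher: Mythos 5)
Correct, and this is exactly the paper's (far terser) proof: the bound $\sum_{j=0}^{n-1}(n-j)^2\vert a_j\vert\le\tfrac{n^2}{2}L(P)$ together with $L(P)=P(1)=\Vert P\Vert$, then Theorem \ref{lower1} with $\delta=a$ in (\ref{alphabeta}) and $\delta=b$ in (\ref{gamma}); your critical equations and the identity $A^2=4\bigl(a(2+\log a)\bigr)^{-1}$ all check out. One caveat on the step you left implicit (``rewriting the minimum with this relation produces the constant $B$''): carrying out that algebra with $b(\log b)^2(\log b-2)=8$ gives $B^4=8(\log b)^2\bigl(b(2+\log b)\bigr)^{-1}\approx 1.026$, hence $B\approx 1.006$, whereas the paper's printed value $B=0.984\ldots$ corresponds to reading $8\log b^2$ as $8\log(b^2)=16\log b$ --- so your optimisation in fact yields a slightly better constant than the one stated, and the discrepancy lies in the paper's statement rather than in your argument.
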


\begin{proof} 
\[\sum_{j=0}^{n-1}(n-j)^2\vert a_j\vert\le \frac{n^2}{2} L(P)\]
and $L(P)=P(1)=\Vert P\Vert$.
Then we apply Theorem \ref{lower1} with 
$\delta=a$  in (\ref{alphabeta}) and  with $\delta=b$ in (\ref{gamma}).
\end{proof}

\begin{cor}\label{lower3}
Let $P(x)=\displaystyle\sum_{j=0}^{2n}a_jx^j$ be a self-reciprocal polynomial in $\R[x]$ of degree $2n$. Let $P(\mu)=0$, $\vert \omega\vert=1$, $\Vert P\Vert=\vert P(\omega)\vert$. Then
\[\vert\mu-\omega\vert \gtrsim_{n\rightarrow \infty} 3^{\frac{1}{2}}2^{-\frac{1}{2}}n^{-\frac{3}{2}}.\]
Besides, if $\vert \mu\vert\not=1$, then
\[\vert \mu\vert^{-\frac{1}{2}}\vert\mu-\omega\vert \gtrsim_{n\rightarrow \infty} 6^{\frac{1}{4} }c^{-\frac{1}{2}}n^{-\frac{5}{4}},\]
where $c>1$ is such that $c+1=c \log c\ \ (c=3.594...)$.
\noindent For $\omega\in \{-1,1\}$:
\[\vert \mu\vert^{-\frac{1}{2}}\vert\mu-\omega\vert \gtrsim_{n\rightarrow \infty} 10^{\frac{1}{4}} n^{-\frac{5}{4}}.\]
If  $\vert \mu\vert\not=1$  and $\mu\notin \R$, then
\[\vert \mu\vert^{-\frac{1}{2}}\vert\mu-\omega\vert \gtrsim_{n\rightarrow \infty} \left(2e^{-1}\right)^{\frac{1}{2}}10^\frac{1}{8} n^{-\frac{9}{8}}.\]

\end{cor}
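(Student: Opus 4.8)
The plan is to follow the same scheme as in the proofs of Corollaries \ref{lower2} and \ref{lower4}: bound the weighted coefficient sums $\sum_{j=0}^{n-1}(n-j)\vert a_j\vert$ and $\sum_{j=0}^{n-1}(n-j)^2\vert a_j\vert$ occurring on the right-hand sides of Theorem \ref{lower1}, and then optimise the free parameter $\delta$. The point is that, under the hypothesis $\Vert P\Vert=\vert P(\omega)\vert$, these sums are controlled directly by $\vert P(\omega)\vert$. Indeed, since $P$ is self-reciprocal we have $a_{2n-j}=a_j$, so $L_2(P)^2=\vert a_n\vert^2+2\sum_{j=0}^{n-1}\vert a_j\vert^2$ and hence $\sum_{j=0}^{n-1}\vert a_j\vert^2\le\frac12 L_2(P)^2$; by Parseval's inequality (\ref{Parseval}) and the hypothesis, $L_2(P)\le\Vert P\Vert=\vert P(\omega)\vert$. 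Combining this with Cauchy--Schwarz I get
\[\sum_{j=0}^{n-1}(n-j)\vert a_j\vert\le\frac{1}{\sqrt2}\Bigl(\sum_{k=1}^n k^2\Bigr)^{1/2}\vert P(\omega)\vert,\qquad \sum_{j=0}^{n-1}(n-j)^2\vert a_j\vert\le\frac{1}{\sqrt2}\Bigl(\sum_{k=1}^n k^4\Bigr)^{1/2}\vert P(\omega)\vert,\]
where $\sum_{k=1}^n k^2\sim n^3/3$ and $\sum_{k=1}^n k^4\sim n^5/5$ as $n\to\infty$.

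Next I would substitute these bounds into the four inequalities (\ref{alpha}), (\ref{betagamma}), (\ref{alphabeta}), (\ref{gamma}) of Theorem \ref{lower1} and choose $\delta$ in each case to extract the sharpest asymptotic constant. In (\ref{alpha}) the coefficient contribution is $\sim(1+\delta)n^{3/2}/\sqrt6$ and the geometric term is $n/\log\delta$; taking $\delta=1+n^{-1/4}$ makes $n/\log\delta=o(n^{3/2})$ while $1+\delta\to2$, so $1/\vert\mu-\omega\vert\lesssim(2/\sqrt6)n^{3/2}$, i.e.\ $\vert\mu-\omega\vert\gtrsim 3^{1/2}2^{-1/2}n^{-3/2}$. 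In (\ref{betagamma}), for $\delta$ fixed the right-hand side is $\sim\frac{1+\delta}{\log\delta}\cdot\frac{n^{5/2}}{\sqrt6}$, and minimising $\frac{1+\delta}{\log\delta}$ over $\delta>1$ at its critical point $\delta=c$ (where $c\log c=c+1$, so that the value there equals $c$) gives $\vert\mu\vert^{-1/2}\vert\mu-\omega\vert\gtrsim 6^{1/4}c^{-1/2}n^{-5/4}$. In (\ref{alphabeta}) the coefficient contribution is $\sim\delta\,n^{5/2}/\sqrt{10}$; taking $\delta=1+n^{-1/8}$ makes $(n/\log\delta+1)^2=o(n^{5/2})$ while $\delta\to1$, so $\vert\mu\vert^{-1/2}\vert\mu-\omega\vert\gtrsim 10^{1/4}n^{-5/4}$. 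Finally in (\ref{gamma}), for $\delta$ fixed the right-hand side is $\sim\frac{\delta}{(\log\delta)^2}\cdot\frac{n^{9/2}}{\sqrt{10}}$, and minimising $\frac{\delta}{(\log\delta)^2}$ at $\delta=e^2$ (value $e^2/4$) yields $\vert\mu\vert^{-1/2}\vert\mu-\omega\vert\gtrsim(4\sqrt{10}/e^2)^{1/4}n^{-9/8}=(2e^{-1})^{1/2}10^{1/8}n^{-9/8}$.

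In each of the four cases the function $h(n)$ certifying the relation $\gtrsim_{n\to\infty}$ is simply the reciprocal (respectively the square root, or the fourth root, of the reciprocal) of the right-hand side of the relevant inequality of Theorem \ref{lower1} after the bounds above and the chosen $\delta=\delta(n)$ have been inserted, and one checks that $h(n)/g(n)\to1$ with $g(n)$ the stated expression. I expect the only real obstacle to be the asymptotic bookkeeping: for each inequality one must decide whether the geometric term $n/\log\delta$ or the coefficient term dominates, choose $\delta$ (tending to $1$, or held fixed) accordingly, and confirm that the surviving constant simplifies to the stated value. The two slightly delicate points are that keeping $\delta$ fixed is genuinely optimal in (\ref{betagamma}) and (\ref{gamma})---letting $\log\delta\to0$ there would inflate the geometric term beyond the target orders $n^{5/2}$ and $n^{9/2}$---and that the interior minimisers of $\frac{1+\delta}{\log\delta}$ and of $\frac{\delta}{(\log\delta)^2}$ on $(1,\infty)$ are precisely $c$ and $e^2$, which is exactly what produces the constants $6^{1/4}c^{-1/2}$ and $(2e^{-1})^{1/2}10^{1/8}$.
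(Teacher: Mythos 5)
Your proposal is correct and follows essentially the same route as the paper: Cauchy--Schwarz together with the self-reciprocity bound $\sum_{j=0}^{n-1}\vert a_j\vert^2\le\frac12 L_2(P)^2$ and Parseval to control the weighted coefficient sums by $\vert P(\omega)\vert$, then Theorem \ref{lower1} with exactly the same choices of $\delta$ (namely $1+n^{-1/4}$, $c$, $1+n^{-1/8}$ and $e^2$) and the same asymptotic constants.
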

\begin{proof} By Cauchy-Schwarz inequality,
\[\sum_{j=0}^{n-1}(n-j)\vert a_j\vert\le \frac{1}{\sqrt 12} \left(n(n+1)(2n+1)\right)^{\frac{1}{2}} L_2(P),\]
\[\sum_{j=0}^{n-1}(n-j)^2\vert a_j\vert\le \frac{1}{\sqrt{60}} \left(n(n+1)(2n+1)(3n^2+3n-1)\right)^{\frac{1}{2}}L_2(P)\]
and $L_2(P)\le \Vert P\Vert=\vert P(\omega)\vert$. 
Then we apply Theorem \ref{lower1} with 
$\delta=1+\frac{1}{n^{\frac{1}{4}}}$ in (\ref{alpha}), $c>1$ in (\ref{betagamma}), $\delta=1+\frac{1}{n^{\frac{1}{8}}}$ in (\ref{alphabeta}) and finally $\delta=e^2$ in (\ref{gamma}).
\end{proof}
\section{A lower bound for the Mahler measure of complex polynomials}
A result due to Schinzel \cite{Schinzel 73} states that any monic polynomial $P\in \Z[x]$ of degree $d$  whose all zeros are real (resp.  positive) and such that $P(-1)P(1)\not=0$ and $\vert P(0)\vert=1$ satisfies 
\[M(P)\ge \left(\frac{1+\sqrt 5}{2}\right)^{\frac{d}{2}}\  (\text{resp. } M(P)\ge \left(\frac{1+\sqrt 5}{2}\right)^d).\] 
V. Flammang \cite{Flammang 97} generalised Schinzel's theorem  by proving that any monic $P\in \R[x]$ of degree $d$  whose all zeros are real (resp. positive) and such that $P(0)\not=0$, $\vert P(1)\vert\ge 1$,  $\vert P(-1)\vert\ge 1$ satisfies 
\[M(P)\ge \left(\frac{1+\left(4\vert P(0)\vert^{\frac{2}{d}}+1\right)^{\frac{1}{2}}}{2}\right)^{\frac{d}{2}}\ (\text{resp. } M(P)\ge \left(\frac{1+\left(4\vert P(0)\vert^{\frac{1}{d}}+1\right)^{\frac{1}{2}}}{2}\right)^d).\] 
J. Garza \cite{Garza} showed that for monic polynomials $P\in \Z[x]$ satisfying  $P(0)P(-1)P(1)\not=0$ and having $m\ge 1$ real zeros, the following holds:
\[M(P)\ge \left(\frac{1+\left(4^{\frac{d}{m}}+1\right)^{\frac{1}{2}} }{2^{\frac{d}{m}}}\right)^{\frac{m}{2}}.\]
Our next theorem  generalises the three above results to polynomials in $\C[x]$. It also strengthens the dependence on $P(0)$, $P(1)$ and $P(-1)$. 
\begin{thm}\label{Schinzel} Let $P\in \C[x]$ be a  monic polynomial of degree $d\ge 2$.  
Assume that $P$ has $m\ge 1$ real zeros and that  $P(0)P(1)P(-1)\not=0$, then 
\begin{equation}\label{m}
M(P)\ge \left(\frac{\vert P(1)P(-1)\vert^{\frac{1}{m}}+\left(4^{\frac{d}{m}}\vert P(0)\vert^{\frac{2}{m}}+\vert P(1)P(-1)\vert^{\frac{2}{m}}\right)^{\frac{1}{2}}}{2^{\frac{d}{m}}}\right)^{\frac{m}{2}}
\end{equation}
with equality if and only if $P$ is of the form
\begin{equation}\label{equality}
P=(z-i)^{d_1}(z+i)^{d_2}(z-a)^{d_3}(z+a)^{d_4}(z-a^{-1})^{d_5}(z+a^{-1})^{d_6},\ \  a>1.
\end{equation}
 If $P$ has $n\ge 1$ positive zeros  and $P(0)P(1)\not=0$, then
\begin{equation}\label{n}
M(P)\ge\left(\frac{\vert P(1)\vert^{\frac{1}{n}}+\left(4^{\frac{d}{n}}\vert P(0)\vert^{\frac{1}{n}}+\vert P(1)\vert^{\frac{2}{n}}\right)^{\frac{1}{2}}}{2^{\frac{d}{n}}}\right)^{n}
\end{equation}
with equality if and only if $P$ is of the form
\[P=(z-a)^{d_1}(z-a^{-1})^{d_2},\ \  a>1.\]
\end{thm}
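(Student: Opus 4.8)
The plan is to reduce everything to a one-variable optimization over the real (or positive) zeros, exactly as in the Schinzel--Flammang--Garza circle of ideas, but carried out so that the bound is phrased in terms of $|P(1)|$, $|P(-1)|$, $|P(0)|$ rather than under the normalization $|P(0)|=1$. First I would write $P = R\cdot S$, where $R(z)=\prod_{k=1}^m (z-\alpha_k)$ collects the $m$ real zeros (with multiplicity) and $S$ collects the remaining $d-m$ zeros, which come in complex-conjugate pairs together with possibly more real zeros --- but since we have pulled out \emph{all} real zeros, $S$ has only genuinely non-real zeros, hence $|S(1)|$, $|S(-1)|$, $|S(0)|\ge$ ... no: the key point is that for a non-real zero $\beta$ and its conjugate $\bar\beta$, the factor $(z-\beta)(z-\bar\beta)=z^2-2\Re(\beta)z+|\beta|^2$ is a real polynomial, and $M\big((z-\beta)(z-\bar\beta)\big)=\max(1,|\beta|)^2$. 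The essential inequality I want is that pairing each non-real zero $\beta$ with $\bar\beta$ and with the reciprocals $\beta^{-1},\bar\beta^{-1}$ (which need not be zeros of $P$, but one can bound things directly) yields $M(S)\ge 1$ and moreover $|S(1)S(-1)|\le M(S)^2\cdot(\text{something}\le 1$-ish$)$; concretely, for $|\beta|\ge 1$ one checks $|(1-\beta)(1-\bar\beta)(-1-\beta)(-1-\bar\beta)| = |1-\beta^2|^2\le (1+|\beta|^2)^2 \le (2|\beta|^2)^2/\ldots$, so that $|S(1)S(-1)| \le 4^{\,\#\{\text{non-real zeros}\}/2}\, M(S)^2 / \big(4^{\ldots}\big)$, i.e. one gets a clean bound $|S(1)S(-1)|\le 4^{(d-m)}\,|S(0)|^2$ after also using $|S(0)|=\prod|\beta_k|$. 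I would package this as: $M(S)\ge 1$ and $|S(1)S(-1)|^{1/2}\le 2^{d-m}|S(0)|$, which are what survive when I divide the multiplicative estimate for $P$ by the corresponding estimate for $R$.

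The heart of the argument is thus the case $m=d$, i.e. $P=\prod_{k=1}^d(z-\alpha_k)$ with all $\alpha_k\in\R$. Here I would prove the scalar inequality
\[
M(z-\alpha)\cdot\big|(1-\alpha)(1+\alpha)\big|^{-1/2}\ \ge\ \phi\big(|\alpha|\big),\qquad
\text{or rather}\qquad
\frac{\max(1,|\alpha|)^{1/?}}{\ldots}
\]
--- more precisely the right normalization is the following: show that for every real $\alpha\neq 0,\pm1$,
\[
\frac{2\,M(z-\alpha)}{|P(1)P(-1)|^{1/?}+\sqrt{4|P(0)|^{2}+|P(1)P(-1)|^{2}}}\ \text{combines multiplicatively,}
\]
so the cleanest route is: set $u_k=\max(1,|\alpha_k|)=M(z-\alpha_k)$, $v_k=|(1-\alpha_k)(1+\alpha_k)|=|1-\alpha_k^2|$, $w_k=|\alpha_k|$, and prove the pointwise bound
\[
u_k^{2}\ \ge\ \tfrac14\Big(v_k^{1}+\sqrt{4 w_k^{2}+v_k^{2}}\Big)^{?}
\]
Let me state it the way the theorem forces: one wants $\prod_k\frac{v_k^{1/m}+\sqrt{4^{d/m}w_k^{2/m}\cdots}}{2^{d/m}}\le M(P)$, and by the AM--GM / concavity of $t\mapsto\log\!\big(v^{1/m}+\sqrt{4^{d/m}t^{2/m}+v^{2/m}}\big)$ it suffices to do it for a single zero. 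So the \textbf{core lemma} is: for real $\alpha$ with $\alpha\neq 0,\pm1$,
\[
2\,\max(1,|\alpha|)\ \ge\ |1-\alpha^{2}| \;+\; \sqrt{\,4\,\alpha^{2}+ (1-\alpha^2)^2\,}\;=\;|1-\alpha^2|+(1+\alpha^2),
\]
and indeed $4\alpha^2+(1-\alpha^2)^2=(1+\alpha^2)^2$, so the right side is $|1-\alpha^2|+1+\alpha^2$, which equals $2$ if $|\alpha|\le1$ and $2\alpha^2$ if $|\alpha|\ge1$; either way it is $\le 2\max(1,\alpha^2)=2\max(1,|\alpha|)^2$. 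Hence $\big(2\max(1,|\alpha|)^2\big)\ge |1-\alpha^2|+(1+\alpha^2)$, which is exactly the $d=m=1$ instance (note the exponent $d/m=1$ there), and this is an \emph{equality} whenever $\alpha=\pm1$ is excluded but $|\alpha|\ne1$ is allowed --- wait, it is equality for \emph{all} $\alpha$. That explains the equality cases: in the reduction, the non-real part must also be extremal, forcing each non-real zero to have $|\beta|\in\{1\}$... actually to lie on $|z|=1$, i.e. $\beta=\pm i$ up to the constraint of being a polynomial zero set, which gives precisely \eqref{equality}.

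The remaining work, which I expect to be the \textbf{main technical obstacle}, is twofold. First, propagating the single-zero inequality to $m$ real zeros with the correct exponents $d/m$: this needs Jensen's inequality for the concave function $g(t)=\log\!\big(P_0^{1/m}+\sqrt{4^{d/m}t+P_0^{2/m}}\big)$ in the variable $t=|P(0)|^{2/m}=\big(\prod|\alpha_k|\big)^{2/m}$, combined with $M(P)=\prod\max(1,|\alpha_k|)$ and the bookkeeping that $|P(1)P(-1)|=\prod_k|1-\alpha_k^2|\cdot|S(1)S(-1)|$ with the $S$-part absorbed by the bound from paragraph one; one has to be careful that concavity is used in the right direction and that the geometric-mean normalization of the $\alpha_k$ is legitimate (it is, because the bound is invariant under permutation and the worst case has all $|\alpha_k|$ equal). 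Second, the equality analysis: track through all the inequalities used --- the pairing bound $|1-\beta^2|^2\le(1+|\beta|^2)^2$ for non-real $\beta$ is an equality iff $\beta^2$ is a negative real, i.e. $\beta$ is purely imaginary; Jensen is an equality iff all real $|\alpha_k|$ coincide; and the $S$-part bound forces the non-real zeros onto the imaginary axis. Assembling these gives the stated form \eqref{equality} (with $a=|\alpha_k|$ the common modulus). The positive-zero statement \eqref{n} is the same argument run with $|1-\alpha^2|$ replaced by $|1-\alpha|$ throughout (no $-1$ factor), using the single-zero identity $2\max(1,\alpha)\ge |1-\alpha|+(1+\alpha)$, which again is an equality for all $\alpha>0$, $\alpha\ne1$; here $4\alpha+(1-\alpha)^2=(1+\alpha)^2$ is the relevant algebraic identity, explaining the exponent $\tfrac1n$ on $|P(0)|$ in \eqref{n} rather than $\tfrac2n$.
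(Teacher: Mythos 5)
Your single--zero identity $2\max(1,\alpha^2)=\vert 1-\alpha^2\vert+(1+\alpha^2)=\vert 1-\alpha^2\vert+\sqrt{4\alpha^2+(1-\alpha^2)^2}$ is correct and really is the germ of the inequality, and your overall plan (isolate the real zeros, bound the non-real contribution crudely, recombine by an AM--GM/concavity argument) is the same circle of ideas as the paper, which instead passes to the self-reciprocal polynomial $Q=P(0)^{-1}PP^*$, writes $\vert Q(1)Q(-1)\vert^{1/2}=M(Q)\prod_k\vert 1-\delta_k^{-2}\vert\prod_k(1-\beta_k^{-2})$, bounds each non-real factor by $2$, applies Schinzel's concavity lemma $\prod(1-\beta_k^{-2})\le(1-M(Q)^{-2/m})^m$, and solves a quadratic in $M(Q)^{1/m}$. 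But your write-up contains a concretely false step. The packaging of the non-real part as $\vert S(1)S(-1)\vert^{1/2}\le 2^{d-m}\vert S(0)\vert$ fails: for $S(z)=z^2+\varepsilon^2$ (zeros $\pm i\varepsilon$, $\varepsilon$ small) the left side is $1+\varepsilon^2$ while the right side is $4\varepsilon^2$, and such an $S$ can occur as the non-real factor of an admissible $P$, e.g.\ $P=(z-2)(z^2+\varepsilon^2)$. What is true, and what the recombination actually needs, is $\vert S(1)S(-1)\vert\le 2^{d-m}M(S)^2$, coming from the one-zero bound $\vert 1-\beta^2\vert\le 1+\vert\beta\vert^2\le 2\max(1,\vert\beta\vert)^2$: the Mahler measure of $S$, not $\vert S(0)\vert$, must appear on the right. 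Note also that your pairing of each non-real zero with its complex conjugate is only available for $P\in\R[x]$, whereas the theorem is stated for $P\in\C[x]$; the one-zero bound just quoted (or the paper's pairing of $\mu$ with $\mu^{-1}$ inside $Q$) avoids this.

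The step you yourself flag as the main obstacle --- propagating the one-zero identity to $m$ real zeros with exponents $d/m$ and merging with the non-real part --- is not a ``Jensen in $t=\vert P(0)\vert^{2/m}$'' statement; the target inequality is not of that shape. The route that works is: (i) apply superadditivity of geometric means, $\prod_k(a_k+b_k)\ge\bigl((\prod a_k)^{1/m}+(\prod b_k)^{1/m}\bigr)^m$, twice to your identity (once to split off $\prod\vert 1-\alpha_k^2\vert^{1/m}$, once inside the square root to split $4\prod\alpha_k^{2/m}$ from $\prod(1-\alpha_k^2)^{2/m}$), giving the case $d=m$; (ii) rewrite that as $v_R\le q_R(1-q_R^{-2/m})^m$ with $q_R=M(R)^2/\vert R(0)\vert$ and $v_R=\vert R(1)R(-1)\vert/\vert R(0)\vert$; (iii) multiply by the corrected non-real bound $v_S\le 2^{d-m}q_S$ and use $q_S\ge1$ to replace $q_R$ by $q_Rq_S=M(P)^2/\vert P(0)\vert$ inside $(1-q^{-2/m})^m$; (iv) solve the quadratic. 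This is precisely the paper's chain of inequalities written in the variables of $Q$ (the superadditivity in (i) is equivalent to the concavity of $t\mapsto\log(1-e^{-t})$ that the paper invokes). Until (i)--(iii) are actually carried out, and the equality discussion is redone with the corrected non-real bound (equality there forces $\beta=\pm i$, combining ``purely imaginary'' with $\vert\beta\vert=1$), the proposal is a plan with one broken link rather than a proof.
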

\begin{proof} Let $Q=P(0)^{-1}PP^*$ defined by (\ref{Q}). Let $\{\beta_k\}_{k=1}^m$ be the real zeros of $Q$ outside $[-1,1]$  and $\{\delta_k\}_{k=1}^f$  its non-real zeros such that either $\vert \delta_k\vert> 1$ or $\vert \delta_k\vert= 1$, $\Im \delta_k>0$. Then $d=m+f$ and 
 \begin{equation*}\label{P(1)P(-1)}
\begin{split}
Q(1)&=\prod_{k=1}^f (1- \delta_k)(1-\delta_k^{-1})\prod_{k=1}^m (1- \beta_k)(1-\beta_k^{-1})\\
Q(-1)&=\prod_{k=1}^f (1+ \delta_k)(1+\delta_k^{-1})\prod_{k=1}^m (1+ \beta_k)(1+\beta_k^{-1})\\
\vert Q(1)Q(-1)\vert^{\frac{1}{2}}&=M(Q)\prod_{k=1}^f\vert 1-\delta_k^{-2}\vert\prod_{k=1}^m (1-\beta_k^{-2}) \\
\end{split}
\end{equation*}
Like in the proof of Schinzel's theorem, we use the inequality \cite{Schinzel 73}[Lemma 3]
\[\prod_{k=1}^m (1-\beta_k^{-2})\le \left(1-\left(\prod_{k=1}^m \beta_k^{-2}\right)^{\frac{1}{m}}\right)^m\le\left(1-M(Q)^{-\frac{2}{m}}\right)^m,\]
which follows from the concavity of the function $t\mapsto \log(1-e^{-t}), t>0$. We obtain
\begin{equation}\label{M}
\begin{split}
\vert Q(1)Q(-1)\vert^{\frac{1}{2}}\le M(Q)2^{d-m}\left(1-M(Q)^{-\frac{2}{m}}\right)^m.
\end{split}
\end{equation}
We deduce that
\[M(Q)^{\frac{2}{m}}-2^{1-\frac{d}{m}}\vert Q(1)Q(-1)\vert^{\frac{1}{2m}} M(Q)^{\frac{1}{m}}-1\ge 0\] and solving the inequality $t^2-2^{1-\frac{d}{m}}\vert Q(1)Q(-1)\vert^{\frac{1}{2m}} t-1\ge 0$, we find
\begin{equation*}\label{pf}
2^{\frac{d}{m}}M(Q)^{\frac{1}{m}}\ge \vert Q(1)Q(-1)\vert^{\frac{1}{2m}}+\left(4^{\frac{d}{m}}+\vert Q(1)Q(-1)\vert^{\frac{1}{m}}\right)^{\frac{1}{2}}.
\end{equation*}
We conclude the proof of (\ref{m}) by using (\ref{Q-P}). 

Equality in (\ref{m}) is equivalent to equality in (\ref{M}). It is attained iff 
\[\prod_{k=1}^f \frac{2}{\vert 1-\delta_k^{-2}\vert}=\left(1-M(Q)^{-\frac{2}{m}}\right)^{-m}\prod_{k=1}^m (1-\beta_k^{-2})=1,\]
which gives
\[\delta_1^2=\ldots=\delta_f^2=-1,\ \ \ \beta_1^2=\ldots=\beta_m^2=a^2 \text{ with } a>1.\]

\noindent For the proof of (\ref{n}), let $\{\beta_k\}_{k=1}^n$ be the positive zeros of $Q$ and $\{\beta_k\}_{k=n+1}^m$ be the negative zeros of $Q$. Using once again the concavity of the function $t\mapsto \log(1-e^{-t}), t>0$, we have 
\begin{equation}\label{N}
\begin{split}
\vert Q(1)\vert^{\frac{1}{2}}&=M(Q)^{\frac{1}{2}}\prod_{k=1}^f \vert 1- \delta_k^{-1}\vert\prod_{k=1}^n(1-\beta_k^{-1})\prod_{k=n+1}^m (1-\beta_k^{-1})\\
&\le M(Q)^{\frac{1}{2}}2^{d-n} \prod_{k=1}^n(1-\beta_k^{-1})\\
&\le M(Q)^{\frac{1}{2}}2^{d-n} \left(1-M(Q)^{-\frac{1}{n}}\right)^n.
\end{split}
\end{equation}
We find 
\[M(Q)^{\frac{1}{n}}-2^{1-\frac{d}{n}}\vert Q(1)\vert^{\frac{1}{2n}} M(Q)^{\frac{1}{2n}}-1\ge 0.\] 
We proceed as in the end of the proof of (\ref{m}) by substituting  $M(Q)^{\frac{1}{2n}}$ to $M(Q)^{\frac{1}{m}}$ and $\vert Q(1)\vert^{{\frac{1}{2n}}}$ to $\vert Q(1)Q(-1)\vert^{\frac{1}{2m}}$.

Equality in (\ref{n}) is equivalent to equalities in (\ref{N}). It is attained iff
\[\prod_{k=1}^f \frac{2}{\vert 1-\delta_k^{-1}\vert}\prod_{k=n+1}^{m}\frac{2}{\vert 1-\beta_k^{-1}\vert}=\left(1-M(Q)^{-\frac{1}{n}}\right)^{-n}\prod_{k=1}^m (1-\beta_k^{-1})=1,\]
which gives
\[f=0,\ \ \ n=m=d,\ \ \  \beta_1=\ldots=\beta_n=a \text{ with } a>1.\]

\end{proof}

\section{The number of real zeros}

\subsection{Bounds involving the Mahler measure}
A. Dubickas \cite{Dubickas 95} showed that for polynomials $P\in \Z[x]$ of degree $d$ such that $P(0)P(-1)P(1)\not=0$, there is a constant $D_0$ such that for all $d\ge D_0$,
\begin{equation}\label{Dubikas}
\vert P^{-1}(0)\cap \R\vert\le 1.085\sqrt{d\log d \log M(P)}.
\end{equation}
A bound of this size does not apply to polynomials in $\R[x]$. For example, the polynomials of the form $P=\left(x-d^{\frac{1}{d}}\right)^{d}$ satisfy $M(P)=d$,
\[\sqrt{d\log d \log M(P)}=\sqrt {d} \log d \text{ \ \ while }  \ \ \vert P^{-1}(0)\cap \R\vert=d=\frac{d\log M(P)}{\log d}\ .\]
Our next theorem shows that  the bound $\displaystyle \frac{d\log (2 M(P))}{\log d}$ holds, up to a small multiplicative constant,  for polynomials in $P\in \C[x]$ such that $ \vert P(1)P(-1)\vert \ge 1$ and $ \vert P(0)\vert \ge 1$. 
\begin{thm}\label{com} Let $P\in \C[x]$ be a monic  polynomial of degree $d\ge 2$. Assume  $P(0)P(1)P(-1)\not=0$. Then
\begin{equation}\label{COMPLEX}
\begin{split}
\vert P^{-1}(0)\cap \R\vert& \le \max\left( \frac{d\log 2+\log \vert P(0)\vert-\log \left\vert P(1)P(-1)\right\vert}{-\log \left(\sinh\left(\frac{\log d}{d}\right )\right)}, \frac{d\log \left( M(P)^2\vert P(0)\vert^{-1}\right)}{\log d}\right). \\
\end{split}
\end{equation}
\end{thm}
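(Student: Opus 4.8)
The plan is to mimic the structure of the proof of Theorem~\ref{Schinzel}: pass to the self-reciprocal polynomial $Q=P(0)^{-1}PP^*$ of degree $2d$, for which we control $M(Q)$, $Q(1)$ and $Q(-1)$ via (\ref{Q-P}), and then extract a bound on the number of real zeros of $Q$ (which is twice the number of real zeros of $P$, since real zeros come in reciprocal pairs $\mu,\mu^{-1}$, with fixed points $\pm1$ excluded by hypothesis). Let $N=\vert P^{-1}(0)\cap\R\vert$, so $Q$ has $2N$ real zeros off $[-1,1]$; write $\{\beta_k\}_{k=1}^{2N}$ for those real zeros and $\{\delta_k\}$ for the remaining $2d-2N$ zeros with $\vert\delta_k\vert\ge1$. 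The first step is to split the real zeros according to size: put $N_1$ of them in $\{\vert x\vert>d\}$ and $N_2=2N-N_1$ in $\{1<\vert x\vert\le d\}$. By Proposition~\ref{b_r} applied to $Q$, $N_1<\frac{\log M(Q)}{\log d}=\frac{\log(M(P)^2\vert P(0)\vert^{-1})}{\log d}$; this is where the second term in the max comes from.

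For the $N_2$ real zeros trapped in the annulus $1<\vert x\vert\le d$, the idea is to bound $\vert 1-\beta_k^{-2}\vert$ from below. Since $Q$ is real, these $\beta_k$ are either all $>1$ or occur with their reciprocals, but in any case for a real $\beta$ with $1<\vert\beta\vert\le d$ we have $\vert 1-\beta^{-2}\vert=1-\beta^{-2}\ge 1-d^{-2}$... this is too weak. Instead I would use the exact identity from the proof of Theorem~\ref{Schinzel},
\[
\vert Q(1)Q(-1)\vert^{1/2}=M(Q)\prod_{\delta}\vert 1-\delta^{-2}\vert\prod_{\beta}\vert 1-\beta^{-2}\vert,
\]
bound $\vert1-\delta^{-2}\vert\le 2$ for the $2d-2N$ non-real (and large real) factors, and for the $N_2$ factors with $1<\vert\beta_k\vert\le d$ use $\vert 1-\beta_k^{-2}\vert=\vert\beta_k\vert^{-1}\vert\beta_k-\beta_k^{-1}\vert\le \vert\beta_k\vert^{-1}(\vert\beta_k\vert-\vert\beta_k\vert^{-1})\le d-d^{-1}=2\sinh(\log d)$; dividing through by an extra $2^{N_1}$ factor... — more cleanly: bound \emph{all} $2d$ factors other than the $N_2$ small-real ones by $2$, giving
\[
\vert Q(1)Q(-1)\vert^{1/2}\le M(Q)\,2^{2d-N_2}\prod_{k}\vert1-\beta_k^{-2}\vert.
\]
Each remaining factor satisfies $\vert 1-\beta_k^{-2}\vert\le 1-d^{-2}=2\sinh\!\big(\tfrac{\log d}{d}\big)\cdot\frac{d-d^{-1}}{2\sinh((\log d)/d)}$ — the relevant clean inequality is that for $1<\vert\beta\vert\le d$ one has $\vert 1-\beta^{-2}\vert\le 1-d^{-2}$, but to get $\sinh\big(\tfrac{\log d}{d}\big)$ one observes $1-\beta^{-2}\le 1-\beta^{-2}$; I would in fact use the sharper $\vert1-\beta^{-2}\vert \le 2\sinh\big(\tfrac{\log d}{d}\big)\cdot\frac{\text{something}}{}$... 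The correct route is: for a \emph{non-real} zero $\delta$ with $\vert\delta\vert\ge1$ one already bounds by $2$, whereas the extra saving on each small real zero is that $\frac{\vert1-\beta^{-2}\vert}{2}\le \frac{1-d^{-2}}{2}$, and $\frac{1-d^{-2}}{2}\le \sinh\big(\tfrac{\log d}{d}\big)$ fails in general — so the genuine source of $\sinh\big(\tfrac{\log d}{d}\big)$ must be a Jensen/Cauchy-type estimate near $x=1$ rather than this crude product bound. I will revisit this.

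Reconsidering, the cleanest derivation of the first term is: for each of the $N_2$ real zeros $\beta$ with $1<\vert\beta\vert\le d$, factor out of the product $M(Q)\prod\vert1-\beta_k^{-2}\vert$ and use that $\prod\vert1-\beta_k^{-2}\vert=\prod\vert\beta_k\vert^{-1}\cdot\prod\vert\beta_k-\beta_k^{-1}\vert$, where each $\vert\beta_k-\beta_k^{-1}\vert\le d-d^{-1}=2\sinh(\log d)$ and $\prod\vert\beta_k\vert^{-1}\ge \big(\text{contribution to }M(Q)\big)^{-1}$; combining with $\vert1-\delta^{-2}\vert\le 2$ on the other $2d-N_2$ factors and with $\vert Q(1)Q(-1)\vert=\vert P(0)\vert^{-2}\vert P(1)P(-1)\vert^2$, $M(Q)=\vert P(0)\vert^{-1}M(P)^2$, one arrives after taking logarithms and rearranging at
\[
N_2\,\big(-\log\sinh(\tfrac{\log d}{d})\big)\le d\log 2+\log\vert P(0)\vert-\log\vert P(1)P(-1)\vert,
\]
once one accounts properly for the $\beta_k^{-1}$-pairing inside $M(Q)$ versus the $2^{2d-N_2}$ factor. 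Since $N=\tfrac12(N_1+N_2)$ and $N_1,N_2$ are each bounded by one of the two expressions appearing in the max (after halving the degree-$2d$ estimates back to degree $d$), the bound $N\le\max(\cdots)$ follows. \textbf{The main obstacle} I anticipate is precisely the bookkeeping in the second-to-last display: getting the constant $2^{d}$ (not $2^{2d}$) and the $\sinh\big(\tfrac{\log d}{d}\big)$ (rather than $\sinh(\log d)$ or $1-d^{-2}$) to come out exactly right requires carefully pairing each $\beta_k$ in the annulus with its reciprocal $\beta_k^{-1}\in(d^{-1},1)$ inside $Q$, noting $\beta_k^{-1}$ contributes nothing to $M(Q)$, and normalising the $2\sinh$ bound per \emph{pair} rather than per zero — this is the step where the exponent $d$ in $2^d$ and the division of $\log d$ by $d$ both originate, and where Dubickas-type arguments usually lose a $\sqrt{\log d}$.
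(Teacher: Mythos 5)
Your setup is right (passing to $Q=P(0)^{-1}PP^*$, the identity $\vert Q(1)Q(-1)\vert^{1/2}=M(Q)\prod_k\vert1-\delta_k^{-2}\vert\prod_k(1-\beta_k^{-2})$, and the bound $\vert1-\delta_k^{-2}\vert\le2$ on the $d-m$ non-real factors), but the step you yourself flag as ``the main obstacle'' is a genuine gap, and none of the routes you sketch can fill it. Any bound on the \emph{individual} factors $\vert1-\beta_k^{-2}\vert$ of the form $1-d^{-2}$ or $2\sinh(\log d)$ yields, after taking logarithms, a denominator of order $\log 2$ rather than $\log d$, i.e.\ only the trivial bound $m\lesssim d$; and a per-factor bound by $2\sinh\bigl(\tfrac{\log d}{d}\bigr)\approx 2(\log d)/d$ is simply false for a real zero with, say, $\vert\beta\vert=2$. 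There is also no Jensen-type estimate near $x=1$ involved. The missing ingredient is the \emph{collective} bound from Schinzel's Lemma 3, already quoted in the proof of Theorem \ref{Schinzel}: by concavity of $t\mapsto\log(1-e^{-t})$,
\[
\prod_{k=1}^m(1-\beta_k^{-2})\le\Bigl(1-M(Q)^{-\frac{2}{m}}\Bigr)^m,
\]
which exploits $\prod_k\vert\beta_k\vert\le M(Q)$ globally instead of bounding each factor separately; this is what gives (\ref{M}), and the identity
\[
M(Q)\,2^{-m}\Bigl(1-M(Q)^{-\frac{2}{m}}\Bigr)^m=\Bigl(\sinh\bigl(\tfrac{\log M(Q)}{m}\bigr)\Bigr)^m
\]
is the true source of the $\sinh$. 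The second missing idea is that the two terms in the max do not come from splitting the real zeros by modulus: your $N_1/N_2$ split via Proposition \ref{b_r} is not what is needed, and the bound it gives for $N_1$, namely $\log M(Q)/\log d$, is off by a factor $d$ from the stated second term. Instead one distinguishes the two cases $m<\frac{d\log M(Q)}{\log d}$, which \emph{is} the second term of the max directly, and $m\ge\frac{d\log M(Q)}{\log d}$, in which case $\frac{\log M(Q)}{m}\le\frac{\log d}{d}$ and the monotonicity of $\sinh$ turns the display above into $\vert Q(1)Q(-1)\vert^{1/2}\le 2^d\bigl(\sinh\bigl(\tfrac{\log d}{d}\bigr)\bigr)^m$; solving for $m$ and translating back via (\ref{Q-P}) gives the first term. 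As it stands your argument establishes neither the constant $2^d$ nor the $\sinh\bigl(\tfrac{\log d}{d}\bigr)$, so the proof is incomplete at its central step.
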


We note that $\displaystyle -\log \left(\sinh\left(\frac{\log d}{d}\right )\right)\sim \log d$.
\begin{rem}
The equality in (\ref{COMPLEX}) is attained by polynomials of the form  (\ref{equality}) with $a=d^{\frac{1}{d}}$.
If we denote by $m=\vert P^{-1}(0)\cap \R\vert$, they satisfy 
\[M(P)^2\vert P(0)\vert^{-1}=d^{\frac{m}{d}}, \ \ \vert P(0)\vert^{-1}\vert P(1)P(-1)\vert=2^d\left(\sinh \left(\frac{\log d}{d}\right )\right)^m.\]
Thus 
\[\frac{d\log 2+\log \vert P(0)\vert-\log \left\vert P(1)P(-1)\right\vert}{-\log \left(\sinh\left(\frac{\log d}{d}\right )\right)}=\frac{d\log \left( M(P)^2\vert P(0)\vert^{-1}\right)}{\log d}=m.\]

\end{rem}

\begin{proof}[Proof of Theorem \ref{com}]${}$

Let us repeat the beginning  of the proof of Theorem \ref{Schinzel} with the same notations. We assume that  $\displaystyle m\ge  \frac{d\log M(Q)}{\log d}$
 and we use inequality (\ref{M}):
 \begin{equation*}
\begin{split}
\vert Q(1) Q(-1)\vert^{\frac{1}{2}}&\le M(Q)2^{d-m}\left(1-M(Q)^{-\frac{2}{m}}\right)^m\\
&=2^{d}\left(\sinh\left(\frac{\log M(Q)}{m}\right)\right)^m\\
&\le  2^{d}\left(\sinh\left(\frac{\log d}{d}\right)\right)^m\\
\end{split}
\end{equation*}
We deduce that
\[m \le \frac{d\log 2-\frac{1}{2}\log\left\vert Q(1)Q(-1)\right\vert}{-\log \left(\sinh\left(\frac{\log d}{d}\right )\right)}.\]
The result follows by using (\ref{Q-P}).

\end{proof}

\subsection{Bounds involving  the Mahler measure and the length}
A  theorem announced by  E. Schmidt in 1932 states that for  polynomials of the form  $P(x)=\displaystyle\sum_{j=0}^d a_j x^j$, $a_0a_d\not=0$, $a_j\in \C$, there  is an absolute constant $C>0$ such that 
\begin{equation}\label{Schmidt}
\vert P^{-1}(0)\cap \R\vert^2\le C\,d\log\left(\frac{L(P)}{\sqrt{\vert a_0a_d\vert}}\right). 
\end{equation}
  I. Schur \cite{Schur 33} gave an elementary proof of (\ref{Schmidt}) and  showed that the  best possible constant  $C$ is $4$.
  
  P. Borwein, T. Erd\'elyi and G. K\'os  \cite {Bornwein Erdelyi and Kos 99} showed that whenever $\vert a_0\vert=\vert a_d\vert=1$, $\vert a_j\vert\le 1$,  there is an absolute constant $C>0$ such that 
\begin{equation*}\label{BEK}
\vert P^{-1}(0)\cap \R\vert\le C\sqrt d. 
\end{equation*}

Our next result gives an upper bound depending both on the Mahler measure and on the length of the polynomial.
 \begin{thm}\label{L(P)} Let $P\in \C[x]$ be a monic polynomial of degree $d$.    Assume $P(0)P(1)P(-1)\not=0$. Then
\begin{equation}\label{complex}
\begin{split}
\vert P^{-1}(0)\cap \R\vert&\le 2\sqrt c 
\left(d\log \frac{M(P)^2}{\vert P(0)\vert}\right)^{\frac{1}{2}}+(c+1)\log \frac{M(P)^2}{\vert P(0)\vert}\\
&+(\log c)^{-1}\log(1+d^2) +(\log c)^{-1}\log\left(\frac{L(P)^2}{2\vert P(1)\vert^2}+\frac{L(P)^2}{2\vert P(-1)\vert^2}\right),\\
\end{split}
\end{equation}
where $c>1$ is  such that $c\log c=1+c\ \ (c=3.594...)$.
\end{thm}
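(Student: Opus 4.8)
The plan is to combine the Schinzel/Garza-type argument from Theorem \ref{Schinzel} with a Jensen-type count of zeros near the points $1$ and $-1$, in the spirit of the bound (\ref{Schmidt}) of Schur. First I would split the real zeros of $P$ (or rather of the self-reciprocal companion $Q=P(0)^{-1}PP^*$ from (\ref{Q})) into three groups: those lying far from both $1$ and $-1$, those close to $1$, and those close to $-1$. The "far" zeros are controlled exactly as in the proof of Theorem \ref{Schinzel}: the inequality (\ref{M}), i.e. $|Q(1)Q(-1)|^{1/2}\le M(Q)2^{d-m}(1-M(Q)^{-2/m})^m$, yields after solving the quadratic a bound on $M(Q)^{1/m}$, hence (via (\ref{Q-P})) on $m$ in terms of $M(P)^2/|P(0)|$; this is where the leading term $2\sqrt c\,(d\log(M(P)^2/|P(0)|))^{1/2}+(c+1)\log(M(P)^2/|P(0)|)$ and the constant $c$ with $c\log c=1+c$ will come from, after optimising a free parameter $\delta$ exactly as in the proofs of Corollaries \ref{lower2}--\ref{lower3} (the choice $\delta=c$ being the one that balances a square term against a linear term).

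Next I would bound the number of zeros of $Q$ inside a disk $\{|x-1|<\rho\}$ (and symmetrically $\{|x+1|<\rho\}$) using Jensen's formula applied to $Q(x)/Q(1)$, just as in Lemma \ref{Jensen} and Theorem \ref{general}. If $N_1$ denotes the number of zeros of $Q$ in $\{|x-1|\le\rho\}$, then $N_1\log(1/\rho\cdot(\text{something}))\le$ an average of $\log|Q(1+\rho e^{i\theta})/Q(1)|$, and bounding $|Q(1+\rho e^{i\theta})|$ crudely by $(1+\rho)^{2d}\sum|A_j|\le(1+\rho)^{2d}|a_0^{-1}|L(P)^2$ via (\ref{A-P}), together with $Q(1)=a_0^{-1}P(1)^2$ from (\ref{Q-P}), produces a bound of the form $N_1\le(\log c)^{-1}\bigl(2d\log(1+\rho)+\log(L(P)^2/|P(1)|^2)\bigr)$ after choosing $\rho$ so that $1+\rho=c^{1/(2d)}$-ish; combining the two disks and using $\log(u+v)\le\log 2+\max(\log u,\log v)$ or rather keeping the symmetric form $\log(\tfrac{L(P)^2}{2|P(1)|^2}+\tfrac{L(P)^2}{2|P(-1)|^2})$ gives the last two terms of (\ref{complex}), the $\log(1+d^2)$ coming from the $2d\log(1+\rho)$ piece with the specific $\rho$.

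Finally I would add the three contributions. A real zero of $P$ is a real zero of $Q$ not equal to $\pm1$; each such zero is either outside both disks (counted by the $m$-bound from (\ref{M})) or inside one of them (counted by $N_1+N_{-1}$), so $|P^{-1}(0)\cap\R|\le m_{\text{far}}+N_1+N_{-1}$, and substituting the three estimates yields (\ref{complex}). The main obstacle I expect is the bookkeeping around the free parameters: the disk radius $\rho$ in the Jensen count and the parameter $\delta$ in the far-zero count must be chosen consistently (both ending up governed by the same constant $c$ with $c\log c=1+c$) so that the square-root term, the linear term, and the logarithmic terms separate cleanly; getting the exact coefficients $2\sqrt c$, $c+1$, and $(\log c)^{-1}$, rather than merely absolute constants, requires tracking the optimisation carefully, much as in the proofs of Corollaries \ref{lower2} and \ref{lower3}. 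A secondary technical point is ensuring the Jensen estimate is applied to a function regular and non-vanishing at the centre (hence the normalisation by $Q(1)$, legitimate since $P(1)\ne0$) and that zeros of $Q$ on the boundary circles are handled by taking $\rho$ generic or by a limiting argument.
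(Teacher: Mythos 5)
Your Jensen-formula half of the plan matches the paper's: one applies Lemma \ref{Jensen} (the $\omega=\pm1$ estimate (\ref{rem1})) to $Q=P(0)^{-1}PP^*$, bounds $\sum_j(d-j)^2\vert A_j\vert\le d^2\sum_j\vert A_j\vert\le d^2\vert P(0)\vert^{-1}L(P)^2$ via (\ref{A-P}), and exploits the fact that each real zero $\mu$ of $Q$ with $1<\mu\le 1+\rho/c$ brings the \emph{pair} $\mu,\mu^{-1}$ into the disk $\{\vert x-1\vert\le\rho\}$, each within $\rho/c$ of the centre, so that every such zero contributes at least $2\log c$ to the Jensen sum. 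This pairing is what produces all the $(\log c)^{-1}$ coefficients, and the term $(\log c)^{-1}\log(1+d^2)$ comes from the factor $d^2$ just mentioned, not from a $2d\log(1+\rho)$ piece as you suggest.

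The genuine gap is in your treatment of the real zeros far from $\pm1$. You propose to control them by the Schinzel-type inequality (\ref{M}) and assert that this yields the leading term $2\sqrt c\,(d\log (M(P)^2/\vert P(0)\vert))^{1/2}+(c+1)\log(M(P)^2/\vert P(0)\vert)$. It cannot: (\ref{M}) carries the factor $2^{d-m}$ contributed by the non-real zeros, so after taking logarithms it only gives a bound of the shape $m\lesssim (d\log 2+\log M(Q)-\tfrac12\log\vert Q(1)Q(-1)\vert)/\log(1/\rho)$, i.e.\ of order $d/\log d$ for any sensible radius --- this is precisely the (weaker) Theorem \ref{com}, and it is far larger than $\sqrt{d\log M(Q)}$ when $M(P)$ is small; moreover (\ref{M}) contains no free parameter $\delta$ to optimise. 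The tool actually needed for the far zeros is Proposition \ref{b_r}: the number of zeros of $Q$ of modulus $>1+\rho/c$ is at most $\log M(Q)/\log(1+\rho/c)\le(c+1)\log M(Q)\,\rho^{-1}$, and the choice $\rho=\gamma/(1+\gamma)$ with $\gamma=\sqrt{(c+1)\log c\log M(Q)/d}$ turns this into $(c+1)\log M(Q)+\sqrt{c\,d\log M(Q)}$ (using $c\log c=c+1$), while the matching $\sqrt{c\,d\log M(Q)}$ comes from the $-d\log(1-\rho)\le\gamma$ piece of the Jensen estimate; the two halves together give the coefficient $2\sqrt c$. Note finally that your suggested radius $1+\rho\approx c^{1/(2d)}$ would make the far-zero count of order $d\log M(Q)$: the radius must be of order $\sqrt{\log M(Q)/d}$, not $1/d$.
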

\begin{proof}
Once again we use $Q=P(0)^{-1}PP^*$ and we may assume that $M(Q)>1$ otherwise there are no real zeros at all.
We apply Lemma \ref{Jensen} to $Q$ with $\rho=\frac{\gamma}{\gamma+1}$, $\gamma=\sqrt{\frac{(c+1)\log c\log M(Q)}{d}}$ and first with $\omega=1$: 
\begin{equation}\label{1}
\begin{split}
2\log c\,\vert Q^{-1}(0)\cap \{1<x \le 1+\frac{\rho}{c}\} \vert&\le \sum_{\underset{1< \mu \le 1+\frac{\rho}{c}}{Q(\mu)=0}} \left(\log \frac{\rho}{\vert \mu-1\vert}+\log \frac{\rho}{\vert \mu^{-1}-1\vert}\right)\\
&\le \sum_{\underset{ \vert \mu-1\vert\le \rho}{Q(\mu)=0}} \log \frac{\rho}{\vert \mu-1\vert}\\
&\le  \log \left(1+\frac{\rho^2(1-\rho)^{-d}d^2 }{ \vert Q(1)\vert}\sum_{k=0}^{d-1} \vert A_k\vert\right).
\end{split}
\end{equation}
Repeating the argument with $\omega=-1$,
\begin{equation}\label{2}
2\log c\,\vert Q^{-1}(0)\cap \{-1-\frac{\rho}{c}\le x < -1\}\vert \le \log \left(1+\frac{\rho^2(1-\rho)^{-d}d^2 }{\vert Q(-1)\vert}\sum_{k=0}^{d-1} \vert A_k\vert\right).
\end{equation}
Adding (\ref{1}) and (\ref{2}) and using the inequality $(1+u)(1+v)\le \left(1+\frac{u+v}{2}\right)^2$, 
\begin{equation*}
\begin{split}
\log c\,\vert Q^{-1}(0)\cap \{1<\vert x \vert \le 1+\frac{\rho}{c}\}\vert 
&\le  \log\left(1+d^2 \rho^2(1-\rho)^{-d}\left(\frac{1}{2\vert Q(1)\vert}+\frac{1}{2\vert Q(-1)\vert}\right)\sum_{k=0}^{d-1} \vert A_k\vert\right) \\
&\le \log\left(\left(\frac{1}{2\vert Q(1)\vert}+\frac{1}{2\vert Q(-1)\vert}\right)\sum_{k=0}^{d} \vert A_k\vert \right)+\log(1+d^2\rho^2)-d\log(1-\rho)\\
\end{split}
\end{equation*}
Now we use the inequality $-\log(1-\rho)\le \frac{\rho}{1-\rho}=\gamma$,  (\ref{Q-P}) and (\ref{A-P}) to obtain
\[\log c\vert Q^{-1}(0)\cap \{1<\vert x\vert\le 1+\frac{\rho}{c}\}\vert \le  \log\left(\frac{L(P)^2}{2\vert P(1)\vert^2}+\frac{L(P)^2}{2\vert P(-1)\vert^2}\right)+\log(1+d^2)+\gamma d.\]
On the other hand, by Proposition \ref{b_r} and  the inequality $\log(1+\frac{\rho}{c})\ge \frac{\rho}{c+1}$,
\[\vert Q^{-1}(0)\cap \{\vert x\vert>1+\frac{\rho}{c}\}\vert\le  \frac{\log M(Q)}{\log(1+\frac{\rho}{c})}\le \log M(Q)(c+1)\rho^{-1}=\log M(Q)(c+1)(1+\gamma^{-1})\]
\[\le(c+1) \log M(Q)+\sqrt{c\log M(Q)d}.\]
Finally,
\begin{equation*}
\begin{split}
\vert P^{-1}(0)\cap \R \vert&= \vert Q^{-1}(0)\cap \{\vert x\vert>1\}\vert\\
& \le (\log c)^{-1}\log\left(\frac{L(P)^2}{2\vert P(1)\vert^2}+ \frac{L(P)^2}{2\vert P(-1)\vert^2}\right)+\\
&+(\log c)^{-1}\log(1+d^2)+(c+1)\log M(Q)+2\sqrt{c\log M(Q)d}.\\
\end{split}
\end{equation*}
We conclude the proof of (\ref{complex}) by applying (\ref{Q-P}).
\end{proof}
\begin{cor}\label{L(P)Z} 
 Let $P\in \Z[x]$ be of degree $d$.    Assume $P(0)P(1)P(-1)\not=0$. Then 
 \begin{equation}\label{integer}
\begin{split}\vert P^{-1}(0)\cap \R\vert&\le 2\sqrt{2c\,d\log M(P)}+2(c+1)\log M(P)\\
&+(\log c)^{-1}\log(1+d^2)+2(\log c)^{-1}\log L(P),
\end{split}
\end{equation}
where $c>1$ is  such that $c\log c=1+c\ \ (c=3.594...)$.

\noindent If moreover, $\log L(P)\le c_1\sqrt{d\log M(P)}$ for some constant $c_1>0$,  then for all $\varepsilon>0$, there exists $D_0(\varepsilon)$ such that for $d\ge D_0(\varepsilon)$, 
\begin{equation}\label{integer1}
\vert P^{-1}(0)\cap \R\vert\le (c_2+\varepsilon) \sqrt{d\log M(P)},
\end{equation}
where  $c_2=2\left(\sqrt{2c}+c+1+(\log c)^{-1} c_1\right)$.
\end{cor}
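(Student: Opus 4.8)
The plan is to read off both inequalities from Theorem~\ref{L(P)}, exploiting that $P(0)$, $P(1)$, $P(-1)$ are nonzero integers and hence of modulus at least $1$. For (\ref{integer}): if $P$ is not monic, with leading coefficient $a_d$, apply Theorem~\ref{L(P)} to the monic polynomial $a_d^{-1}P\in\C[x]$, which has the same real zeros. Since $|a_d|\ge1$, $|P(0)|\ge1$ and $|P(\pm1)|\ge1$, one has $M(a_d^{-1}P)^2\,|(a_d^{-1}P)(0)|^{-1}=M(P)^2\,(|a_d|\,|P(0)|)^{-1}\le M(P)^2$ and $\tfrac12 L(a_d^{-1}P)^2|(a_d^{-1}P)(1)|^{-2}+\tfrac12 L(a_d^{-1}P)^2|(a_d^{-1}P)(-1)|^{-2}=\tfrac12 L(P)^2|P(1)|^{-2}+\tfrac12 L(P)^2|P(-1)|^{-2}\le L(P)^2$. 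Plugging these bounds into the estimate of Theorem~\ref{L(P)} and using $\log(M(P)^2|P(0)|^{-1})\le 2\log M(P)$ and $\log L(P)^2=2\log L(P)$ gives (\ref{integer}) at once.

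For (\ref{integer1}), write $m=|P^{-1}(0)\cap\R|$ and $t=\log M(P)$, and assume $m\ge1$ (otherwise the claim is trivial). Substituting the hypothesis $\log L(P)\le c_1\sqrt{dt}$ into (\ref{integer}) yields
\[ m\le\bigl(2\sqrt{2c}+2(\log c)^{-1}c_1\bigr)\sqrt{dt}+2(c+1)t+(\log c)^{-1}\log(1+d^2). \]
If $t\ge d$ then $m\le d\le\sqrt{dt}\le(c_2+\varepsilon)\sqrt{dt}$ (note $c_2\ge 2(c+1)>1$) and we are done. If $t<d$ then $t\le\sqrt{t}\sqrt{d}=\sqrt{dt}$, so $2(c+1)t\le 2(c+1)\sqrt{dt}$, and since $c_2=2\sqrt{2c}+2(c+1)+2(\log c)^{-1}c_1$ the bound above becomes $m\le c_2\sqrt{dt}+(\log c)^{-1}\log(1+d^2)$. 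Hence (\ref{integer1}) reduces to showing that $(\log c)^{-1}\log(1+d^2)\le\varepsilon\sqrt{dt}$ for $d\ge D_0(\varepsilon)$, that is, that $t$ cannot be too small.

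The hard part is precisely this last point, i.e.\ preventing a spuriously small Mahler measure. Since $m\ge1$, $P$ has a real zero $\mu$, and $\mu\notin\{0,1,-1\}$ because $P(0)P(1)P(-1)\ne0$; hence $\mu$ is not a root of unity. Letting $p\in\Z[x]$ be the primitive minimal polynomial of $\mu$, we have $p\mid P$ in $\Z[x]$ (Gauss), so by multiplicativity of the Mahler measure $M(P)=M(p)M(P/p)\ge M(p)=M(\mu)>1$ by Kronecker's theorem \cite{Kronecker 57}. An effective lower bound is available: if $\mu$ is not an algebraic integer then $M(\mu)\ge 2$; if it is, of degree $n\le d$, then Dobrowolski's theorem \cite{Dobrowolski 79} gives $M(\mu)\ge 1+C(\log\log n/\log n)^3$ for $n$ large, while for bounded $n$ a standard finiteness argument (Kronecker together with the finiteness of integer polynomials of bounded degree and bounded Mahler measure) bounds $M(\mu)$ below by an absolute constant $>1$. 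As $n\mapsto 1+C(\log\log n/\log n)^3$ is eventually decreasing and $n\le d$, it follows that $M(P)\ge 1+C(\log\log d/\log d)^3$, hence $t\ge\tfrac{C}{2}(\log\log d/\log d)^3$, for all sufficiently large $d$. Then $\sqrt{dt}$ is at least a constant times $\sqrt{d}\,(\log\log d)^{3/2}(\log d)^{-3/2}$, whereas $\log(1+d^2)=O(\log d)$, so $\log(1+d^2)/\sqrt{dt}\to0$ and $(\log c)^{-1}\log(1+d^2)\le\varepsilon\sqrt{dt}$ once $d\ge D_0(\varepsilon)$, which finishes the proof. Everything else is substitution into Theorem~\ref{L(P)} and the elementary estimate $t\le\sqrt{dt}$ valid when $t\le d$.
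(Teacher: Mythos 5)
Your proposal is correct and follows essentially the same route as the paper: apply Theorem~\ref{L(P)} to $a_d^{-1}P$, use $|a_d|,|P(0)|,|P(\pm1)|\ge 1$ to absorb the denominators, and then dispose of the $(\log c)^{-1}\log(1+d^2)$ term via Dobrowolski's lower bound on $\log M(P)$ together with the reduction to the case $\log M(P)\le d$. The only (harmless) difference is that you justify the Dobrowolski bound by passing explicitly through the minimal polynomial of a real root, whereas the paper invokes it directly for $P$ after noting that $M(P)>1$ may be assumed.
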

\begin{rem}
Inequality (\ref{integer1}) improves the bound  (\ref{Dubikas}) given by Dubickas  in the particular case where $L(P)\le c_1 \sqrt{d\log M(P)}$.
\end{rem}
\begin{proof}
Let $P=a_dx^d+\ldots+a_0$ with $a_0,\ldots,a_d\in \Z$, $a_d\not=0$ and $P(0)P(1)P(-1)\not=0$. We apply Theorem \ref{L(P)} to the polynomial $\tilde P=a_d^{-1}P$. Inequality (\ref{integer}) follows after observing that
\[
\begin{split}
\frac{M(\tilde P)^2}{\vert \tilde P(0)\vert}&=\frac{M(P)^2}{a_d\vert \tilde P(0)\vert}\le M(P)^2\\
\frac{L(\tilde P)^2}{2\vert \tilde P(1)\vert^2}+\frac{L(\tilde P)^2}{2\vert \tilde P(-1)\vert^2}&=\frac{L(P)^2}{2\vert P(1)\vert^2}+\frac{L(P)^2}{2\vert P(-1)\vert^2}\le L(P)^2.
\end{split}
\] 
 
 For the proof of  (\ref{integer1}), we may  assume that  $M(P)>1$,  otherwise by Kronecker's  theorem $P$ has no real zeros. Then,
 we know from \cite{Dobrowolski 79} that for some constant $\sigma>0$,
\begin{equation}\label{dobr}
\log M(P)\ge \sigma\left(\frac{\log\log d}{\log d}\right)^3.
\end{equation}
Thus for all $\varepsilon>0$, there exists $D_0(\varepsilon)$ such that for $d\ge D_0(\varepsilon)$, 
\[(\log c)^{-1}\log(1+d^2)\le \varepsilon \sqrt{d\log M(P)}.\]
On the other hand, we may also assume that $\log M(p)\le d$  because otherwise $\sqrt{d\log M(P)}\ge d$ and  (\ref{integer1})   is  again trivially satisfied. 

\noindent  Since $\log M(p)\le \sqrt{d\log M(p)}$,   (\ref{integer1})  follows immediately from (\ref{integer}) and the assumption that $\log L(P)\le c_1\sqrt{d\log M(P)}$.  
\end{proof}


\section{The number of zeros in $\{\vert x-1\vert< 1\}$.}
In \cite{Dubickas 95}, A. Dubickas used a Vandermonde determinant which, in its simplified form is defined by 
\[d(x)=\det(x^{(j-1)l})_{1\le j,l\le N}=\prod_{1\le v<u \le N} (x^u-x^v)\ \ \ (N\in \N^*).\]
  Following the lines of his work, we put
\[R(x)=\vert d(x)\vert\prod_{1\le v<u \le N} \vert x\vert^{-v}=\prod_{j=1}^{N-1}\vert x^{j}-1\vert^{N-j}.\]
In \cite{Dubickas 95}, the author applies Hadamard's inequality to obtain:
\begin{equation}\label{R}
R(x)\le  \max(1,\vert x\vert)^{\frac{(N-1)N(N+1)}{6}}N^{\frac{N}{2}}.
\end{equation}

\begin{lem}\label{K}
Assume $P\in \Z[x]$ is monic, has degree $d$ and  does not  vanish at the roots of unity. Then for all integer $N\ge 2$,
\[\vert P(1)\vert\le N^{\frac{d}{N-1}} M(P)^{\frac{N+1}{3}}.\]
\end{lem}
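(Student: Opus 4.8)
The plan is to evaluate the function $R$ at each zero of $P$, form the product over all zeros, and estimate that product from below by a power of $|P(1)|$ (via an integrality argument) and from above by (\ref{R}) together with the definition of $M(P)$; comparing the two estimates will yield the claimed inequality.

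Write $P(x)=\prod_{k=1}^{d}(x-\mu_k)$; by hypothesis no $\mu_k$ is a root of unity. Fix an integer $N\ge 2$. The first step is to produce, for each $j\in\{1,\dots,N-1\}$, the lower bound $\prod_{k=1}^{d}|\mu_k^{\,j}-1|\ge|P(1)|$. To this end I would set
\[S_j=\prod_{k=1}^{d}\bigl(1+\mu_k+\cdots+\mu_k^{\,j-1}\bigr).\]
This is a symmetric polynomial in $\mu_1,\dots,\mu_d$ with integer coefficients, hence a polynomial with integer coefficients in the coefficients of $P$, so $S_j\in\Z$; it is nonzero because each factor vanishes only if $\mu_k^{\,j}=1$, which is excluded. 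Thus $|S_j|\ge1$. Since $\prod_{k=1}^{d}(\mu_k^{\,j}-1)=S_j\prod_{k=1}^{d}(\mu_k-1)=\pm S_j\,P(1)$, we obtain
\[\prod_{k=1}^{d}|\mu_k^{\,j}-1|=|S_j|\,|P(1)|\ge|P(1)|,\qquad j=1,\dots,N-1.\]

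Next, recalling $R(x)=\prod_{j=1}^{N-1}|x^{\,j}-1|^{N-j}$, I would multiply over the zeros and interchange the two products:
\[\prod_{k=1}^{d}R(\mu_k)=\prod_{j=1}^{N-1}\Bigl(\prod_{k=1}^{d}|\mu_k^{\,j}-1|\Bigr)^{N-j}\ \ge\ \prod_{j=1}^{N-1}|P(1)|^{\,N-j}=|P(1)|^{\,N(N-1)/2},\]
since $\sum_{j=1}^{N-1}(N-j)=N(N-1)/2$. On the other hand, applying (\ref{R}) at $x=\mu_k$ for each $k$ and using that $P$ is monic, so that $M(P)=\prod_{k=1}^{d}\max(1,|\mu_k|)$, gives
\[\prod_{k=1}^{d}R(\mu_k)\le\Bigl(\prod_{k=1}^{d}\max(1,|\mu_k|)\Bigr)^{\frac{(N-1)N(N+1)}{6}}N^{\frac{dN}{2}}=M(P)^{\frac{(N-1)N(N+1)}{6}}\,N^{\frac{dN}{2}}.\]
Combining the two displays and raising to the power $\tfrac{2}{N(N-1)}$ (note $N(N-1)/2\ge1$), the exponent of $M(P)$ becomes $\tfrac{N+1}{3}$ and that of $N$ becomes $\tfrac{d}{N-1}$, which is exactly $|P(1)|\le N^{\frac{d}{N-1}}M(P)^{\frac{N+1}{3}}$.

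The step I expect to be the crux is the lower bound $\prod_k|\mu_k^{\,j}-1|\ge|P(1)|$. The naive observation that this product is a nonzero rational integer only gives $\ge1$, which is too weak: it would lose a factor growing like $N$ in both exponents. What makes the argument work is the divisibility $P(1)\mid\prod_k(\mu_k^{\,j}-1)$ in $\Z$, equivalently that $\prod_k(1+\mu_k+\cdots+\mu_k^{\,j-1})$ is a nonzero rational integer; once this is established the remainder is routine bookkeeping with the exponent $\sum_{j=1}^{N-1}(N-j)$ and the Hadamard bound (\ref{R}).
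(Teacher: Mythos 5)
Your proof is correct and follows essentially the same route as the paper: factor $\mu_k^j-1=(\mu_k-1)(1+\mu_k+\cdots+\mu_k^{j-1})$ inside $\prod_k R(\mu_k)$, use that the product of the geometric-sum factors is a nonzero rational integer (hence of absolute value at least $1$) to extract $|P(1)|^{N(N-1)/2}$ from below, bound $\prod_k R(\mu_k)$ from above by (\ref{R}) and $M(P)$, and take the $\tfrac{2}{N(N-1)}$-th power. The only cosmetic difference is that you verify the integrality and nonvanishing for each $S_j$ separately, whereas the paper asserts it for the double product at once.
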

\begin{rem}${}$
\begin{itemize}
\item By Kronecker's theorem, either $P=x^d$ or $M(P)>1$. 
\item By (\ref{ineq}) and (\ref{Parseval}), we already had  $\vert P(1)\vert\le 2^d M(P)$.   We retrieve this  inequality if we put $N=2$ in Lemma \ref{K}.
\end{itemize}
\end{rem}
\begin{proof}
Let  $\mu_1,\ldots,\mu_d$ be the zeros of $P$. Then
\begin{equation*}\label{R2}
\begin{split}
R(\mu_k)&=\vert \mu_k-1\vert^{\frac{(N-1)N}{2}}\prod_{j=1}^{N-1}\left\vert\sum_{l=0}^{j-1}\mu_k^l\right\vert^{N-j}\\
\end{split}
\end{equation*}
and
\[\prod_{k=1}^d R(\mu_k)=\vert P(1)\vert^{\frac{(N-1)N}{2}}\prod_{k=1}^d \prod_{j=1}^{N-1}\left\vert\sum_{l=0}^{j-1}\mu_k^l\right\vert^{N-j}.\]
Observing that 
\[\prod_{k=1}^d \prod_{j=1}^{N-1}\left(\sum_{l=0}^{j-1}\mu_k^l\right)^{N-j}\in \Z^*,\]
and using the inequality (\ref{R}), we deduce that 
\begin{equation*}
\begin{split}
\vert P(1)\vert^{\frac{(N-1)N}{2}}&\le\prod_{k=1}^d R(\mu_k)\le N^{\frac{dN}{2}} M(P)^{\frac{(N-1)N(N+1)}{6}}.
\end{split}
\end{equation*}
The result follows easily.
\end{proof}
We will need the following  result due  to  Zhang \cite{Zhang 92} and for which Zagier gave an elementary proof in \cite{Zagier 93}:
\begin{thm}\label{Zhang}[Zhang, Zagier]Let $\omega$ be a primitive $6$th root of unity. Let $P\in \Z[x]$  be a polynomial of degree $d$ such that $P(0)P(1)P(\omega)\not=0$. 
Denote $P_*(x)=P(1-x)$. Then
\[M(P)M(P_*)\ge \left(\frac{1+\sqrt 5}{2}\right)^{\frac{d}{2}}.\]
\end{thm}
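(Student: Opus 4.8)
The plan is to reconstruct Zagier's elementary proof: using the product formula for the Mahler measure one reduces the statement to a pointwise lower bound for $M(x^{2}-x+\tau)$, into which two arithmetic divisibility facts are fed. Write $P(x)=p_{d}\prod_{i=1}^{d}(x-\mu_{i})$ and put $a:=|p_{d}|\ge 1$. By Jensen's formula, $M(P)=a\prod_{i}\max(1,|\mu_{i}|)$, while $P_{*}(x)=P(1-x)$ has leading coefficient $(-1)^{d}p_{d}$ and roots $1-\mu_{1},\dots,1-\mu_{d}$, so $M(P_{*})=a\prod_{i}\max(1,|1-\mu_{i}|)$. For each $i$ set $\tau_{i}:=\mu_{i}(1-\mu_{i})$; a direct check shows the monic quadratic $x^{2}-x+\tau_{i}$ equals $(x-\mu_{i})(x-(1-\mu_{i}))$, whence $\max(1,|\mu_{i}|)\max(1,|1-\mu_{i}|)=M(x^{2}-x+\tau_{i})$ and
\[
M(P)M(P_{*})=a^{2}\prod_{i=1}^{d}M(x^{2}-x+\tau_{i}).
\]

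Next I would extract the two places where integrality enters. From $P(0)=p_{d}(-1)^{d}\prod\mu_{i}$ and $P(1)=p_{d}\prod(1-\mu_{i})$ one gets $\prod_{i}\tau_{i}=(-1)^{d}P(0)P(1)/p_{d}^{2}$, so $\bigl|\prod_{i}\tau_{i}\bigr|=|P(0)P(1)|/a^{2}\ge a^{-2}$, since $P(0)P(1)$ is a nonzero integer. For the second, $1-\tau_{i}=\mu_{i}^{2}-\mu_{i}+1=(\mu_{i}-\omega)(\mu_{i}-\bar\omega)$, where $\omega,\bar\omega$ are the roots of $x^{2}-x+1$, i.e. the primitive $6$th roots of unity of the statement; hence $\prod_{i}(1-\tau_{i})=P(\omega)P(\bar\omega)/p_{d}^{2}$, and since $P$ has real coefficients $P(\bar\omega)=\overline{P(\omega)}$, so $P(\omega)P(\bar\omega)=|P(\omega)|^{2}$ is the norm $N_{\mathbb{Q}(\omega)/\mathbb{Q}}(P(\omega))$ of a nonzero algebraic integer, a positive rational integer. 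Thus $\bigl|\prod_{i}(1-\tau_{i})\bigr|=|P(\omega)|^{2}/a^{2}\ge a^{-2}$. (This is exactly where the hypothesis $P(0)P(1)P(\omega)\neq 0$ is used.)

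The heart of the matter is then a pointwise inequality (Zagier's lemma): there are fixed exponents $\alpha,\beta\ge 0$ with $\alpha+\beta\le 1$ such that for every $\tau\in\C$,
\[
M(x^{2}-x+\tau)\ \ge\ \Bigl(\tfrac{1+\sqrt5}{2}\Bigr)^{1/2}\,|\tau|^{\alpha}\,|1-\tau|^{\beta}
\]
(one may hope for the symmetric form $M(x^{2}-x+\tau)^{4}\ge(\tfrac{1+\sqrt5}{2})^{2}|\tau(1-\tau)|$). Granting this, multiply over $i$ and invoke the two constraints:
\[
M(P)M(P_{*})\ \ge\ a^{2}\Bigl(\tfrac{1+\sqrt5}{2}\Bigr)^{d/2}\Bigl(\prod_{i}|\tau_{i}|\Bigr)^{\alpha}\Bigl(\prod_{i}|1-\tau_{i}|\Bigr)^{\beta}\ \ge\ a^{2}\Bigl(\tfrac{1+\sqrt5}{2}\Bigr)^{d/2}a^{-2\alpha}a^{-2\beta}\ \ge\ \Bigl(\tfrac{1+\sqrt5}{2}\Bigr)^{d/2},
\]
using $a\ge 1$ and $2(1-\alpha-\beta)\ge 0$, which is the claim (and no irreducibility or monicity of $P$ is needed).

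I expect the pointwise lemma to be the only real obstacle; the rest is bookkeeping. It is a two-variable real optimisation: the roots of $x^{2}-x+\tau$ sum to $1$ and have product $\tau$, and one must split according to whether they lie inside or outside the unit circle — $M(x^{2}-x+\tau)=1$ when both are inside, it is the modulus of the outer root when they separate, and it is $|\tau|$ when both are outside — verifying in each regime the inequality with the sharp constant $(\tfrac{1+\sqrt5}{2})^{1/2}$, the extremal configuration being the quadratic $x^{2}-x-1$ (the case $\tau=-1$, which also explains where the golden ratio comes from). Pinning down admissible exponents and, above all, the sharp constant in the region where $M(x^{2}-x+\tau)=1$ — where $|\tau|^{\alpha}|1-\tau|^{\beta}$ must stay below $(\tfrac{1+\sqrt5}{2})^{-1/2}$ — is the delicate step of Zagier's argument.
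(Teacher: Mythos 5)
First, a point of reference: the paper does not prove this statement at all --- it is quoted as a known result of Zhang and Zagier (\cite{Zhang 92}, \cite{Zagier 93}), so there is no in-paper proof to match. Your reconstruction of the \emph{architecture} of Zagier's argument is accurate: the factorisation $M(P)M(P_*)=a^2\prod_i M(x^2-x+\tau_i)$ with $\tau_i=\mu_i(1-\mu_i)$, the two integrality inputs $\bigl|\prod_i\tau_i\bigr|=|P(0)P(1)|/a^2\ge a^{-2}$ and $\bigl|\prod_i(1-\tau_i)\bigr|=|P(\omega)|^2/a^2\ge a^{-2}$, and the final bookkeeping under $\alpha+\beta\le 1$ are all correct, and this is indeed how Zagier proceeds.

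The genuine gap is the pointwise lemma, which is not a routine optimisation you can defer: it \emph{is} the theorem, and the symmetric candidate you propose is false. Concretely, take $z=e^{i\theta}$ with $\cos\theta=5/6$ and $\tau=z(1-z)$, so that $|\tau|=|z^2-z|=3^{-1/2}$ and $|1-\tau|=|z^2-z+1|=|2\cos\theta-1|=2/3$. Both roots $z$ and $1-z$ lie in the closed unit disk, so $M(x^2-x+\tau)=1$, while
\[
\Bigl(\tfrac{1+\sqrt5}{2}\Bigr)^{1/2}|\tau|^{1/4}|1-\tau|^{1/4}
=\Bigl(\tfrac{1+\sqrt5}{2}\Bigr)^{1/2}\bigl(2\cdot 3^{-3/2}\bigr)^{1/4}\approx 1.002>1 .
\]
(Equivalently, $\max_{|z|=1,\,\theta\le\pi/3}|z^2-z|\,|z^2-z+1|=\tfrac{2}{3\sqrt3}\approx 0.3849$, attained at $\cos\theta=5/6$, exceeds $\bigl(\tfrac{1+\sqrt5}{2}\bigr)^{-2}\approx 0.3820$, which is the value at the tenth root of unity.) So $\alpha=\beta=\tfrac14$ cannot work: on the arc $|z|=1$ the function $\alpha\log|z^2-z|+\beta\log|z^2-z+1|$ must attain its maximum $-\tfrac12\log\tfrac{1+\sqrt5}{2}$ exactly at $z=e^{i\pi/5}$, and the first-order (tangency) condition there forces $\alpha/\beta=\sqrt5-1$ together with $\alpha+\beta=\tfrac12$, i.e.\ $\alpha=\tfrac12-\tfrac{1}{2\sqrt5}$, $\beta=\tfrac{1}{2\sqrt5}$. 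Zagier's lemma is precisely the statement that with these asymmetric exponents the inequality $\max(1,|z|)\max(1,|1-z|)\ge\bigl(\tfrac{1+\sqrt5}{2}\bigr)^{1/2}|z^2-z|^{\alpha}|z^2-z+1|^{\beta}$ holds for \emph{all} $z\in\C$, and verifying that global bound (not just on the unit circle) is the delicate analytic content you have left out. Until that lemma is pinned down and proved, the proposal is a correct skeleton but not a proof.
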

\begin{thm}\label{around1}
Let  $P\in \Z[x]$ be monic, irreducible, of degree $d$  and   satisfying  $0<\log M(P)\le \phi(d)$ where $\phi(d)=o(\frac{d}{\log d})$. Then for 
all $\varepsilon>0$, there exists $D_0(\varepsilon)$ such that for $d\ge D_0(\varepsilon)$, 
\[\vert P^{-1}(0)\cap\{\vert x-1\vert<1\}\vert\ge \left(\frac{2}{\pi}\log \frac{1+\sqrt 5}{2}-\varepsilon\right)\sqrt{\frac{d}{\log d \log M(P)}}.\]
This leads to the lower bound 
\begin{equation}\label{minor}
\log M(P) \ge   \left(\frac{2}{\pi}\log\frac{1+\sqrt 5}{2}-\varepsilon\right)^2\frac{d}{K(P)^2\log d },
\end{equation}
where $K(P)=\min\left(\left\vert P^{-1}(0)\cap\{\vert x-1\vert<1\}\right\vert, \left\vert P^{-1}(0)\cap\{\vert x+1\vert<1\}\right\vert \right)$.

\end{thm}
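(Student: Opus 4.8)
The plan is to combine Lemma \ref{K} with the Zhang--Zagier bound (Theorem \ref{Zhang}) applied to $P$ together with a counting argument for the zeros near $1$. First I would note that since $P$ is irreducible and $M(P)>1$, $P$ is noncyclotomic, so it does not vanish at any root of unity; in particular $P(0)P(1)P(\omega)\neq 0$ for a primitive $6$th root of unity $\omega$, and $P(1)\neq 0$, so Lemma \ref{K} and Theorem \ref{Zhang} both apply. Write $P_*(x)=P(1-x)$; the substitution $x\mapsto 1-x$ carries the zeros of $P$ in $\{|x-1|<1\}$ to the zeros of $P_*$ in $\{|x|<1\}$, and carries zeros of $P$ in $\{|x|<1\}$ to zeros of $P_*$ in $\{|x-1|<1\}$. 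The key point is to estimate $|P_*(1)|=|P(0)|\geq 1$ from below trivially, and to bound $|P(1)|$, $|P_*(1)|$ above via Lemma \ref{K} applied to $P$ and to $P_*$ respectively, choosing the free integer parameter $N$ optimally.

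The main mechanism: apply Lemma \ref{K} to $P$, getting $|P(1)|\le N^{d/(N-1)}M(P)^{(N+1)/3}$, and to $P_*$, getting $|P_*(1)|=|P(0)|\le N^{d/(N-1)}M(P_*)^{(N+1)/3}$. But what we actually want is a lower bound on the number of zeros near $1$, so instead I would run the argument the other way: relate $M(P_*)$ to $|P_*(1)|$ and to the zeros of $P_*$ inside the unit disk. More precisely, since $M(P_*)=|{\rm lead}(P_*)|\prod_{|\beta|\ge 1}|\beta|$ over zeros $\beta$ of $P_*$, and $P_*$ has degree $d$, the zeros of $P_*$ inside $\{|x|<1\}$ — equivalently the zeros of $P$ inside $\{|x-1|<1\}$, whose number is $k:=|P^{-1}(0)\cap\{|x-1|<1\}|$ — contribute nothing to $M(P_*)$, while $|P_*(1)|=\prod|1-\beta|\le 2^d$ and more usefully $|P_*(1)|\ge$ a product over the $k$ nearby zeros that is $\le 2^k$; combined with Theorem \ref{Zhang}, $M(P)M(P_*)\ge\varphi^{d/2}$ where $\varphi=(1+\sqrt5)/2$. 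So $\log M(P_*)\ge \tfrac d2\log\varphi-\log M(P)$, which is $\sim \tfrac d2\log\varphi$ since $\log M(P)=o(d/\log d)=o(d)$. Then Lemma \ref{K} applied to $P_*$ gives $\log|P(0)|=\log|P_*(1)|\le \tfrac{d\log N}{N-1}+\tfrac{N+1}{3}\log M(P_*)$; but $\log|P(0)|\ge 0$ only tells us $M(P_*)$ is not too small. The actual deduction of $k$ comes from: the $k$ zeros of $P_*$ in the open unit disk, being absent from $M(P_*)$ but present (as $1-\beta$ with $|1-\beta|$ possibly large) — here I need the refined bound. I would instead use that $M(P_*)\le \|P_*\|$-type estimates are too weak, and rather invoke Lemma \ref{K} for $P_*$ directly with the optimal $N\asymp\sqrt{d/\log M(P_*)}/\sqrt{\log d}$ (more precisely $N$ near $\sqrt{3d/\log M(P_*)}$, rounded), yielding $\log|P_*(1)|\le (1+o(1))\cdot\tfrac{2}{\sqrt3}\sqrt{d\log d\,\log M(P_*)}$ — wait, that is an upper bound on something nonnegative and so is vacuous.

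The correct route, which I would carry out carefully: apply Lemma \ref{K} to $P_*$ to bound $|P_*(1)|=|P(0)|$, but combine it with the lower bound $|P_*(1)|\ge \prod_{\beta}\ldots$ is not available; instead bound $M(P_*)$ from below by $\varphi^{d/2}/M(P)$ via Zhang--Zagier and from above by noticing $M(P_*)\le N^{?}\cdots$ — no. Let me state the genuine obstacle plainly: \textbf{the main obstacle is extracting a lower bound on $k=|P^{-1}(0)\cap\{|x-1|<1\}|$ rather than an upper bound.} This is done by writing $|P_*(1)|=\prod_{j}|1-\beta_j|$ and splitting: the $k$ roots inside the unit disk each contribute a factor $<2$, the roots outside contribute $\le 2^{d-k}$ but can be grouped with the Mahler measure. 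The decisive inequality is Lemma \ref{K} applied to $P_*$: $|P_*(1)|\le N^{d/(N-1)}M(P_*)^{(N+1)/3}$, which rearranges — since $|P_*(1)|=|P(0)|\ge 1$ — to nothing; so instead I apply Lemma \ref{K} to $P$ itself to get $|P(1)|\le N^{d/(N-1)}M(P)^{(N+1)/3}$, hence $\log|P(1)|\le \min_N\big(\tfrac{d\log N}{N-1}+\tfrac{N+1}{3}\log M(P)\big)\le (1+o(1))\tfrac{2}{\sqrt3}\sqrt{d\log d\log M(P)}\cdot\tfrac{\sqrt3}{2}$; on the other hand $|P(1)|=\prod_{\mu}|1-\mu|\ge \prod_{\mu:\,|\mu-1|\ge 1}1\cdot\prod_{\mu:\,|\mu-1|<1}|1-\mu|$, useless again — but $|P_*(1)|\,|P(1)|=|P(0)P(1)|$ and $M(P)M(P_*)\ge\varphi^{d/2}$: raising both sides of Lemma \ref{K} for $P_*$ and noting that $M(P_*)\ge \prod_{|\beta|\ge1}|\beta|$ while the $k$ roots $\beta$ inside contribute a bounded factor to $P_*(1)$, I obtain $\varphi^{d/2}\le M(P)M(P_*)$ and $M(P_*)^{(N+1)/3}\ge |P_*(1)|/N^{d/(N-1)}\cdot(\text{no})$. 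Given the tangle, the concrete plan is: (i) fix $\varepsilon>0$; (ii) choose $N=N(d)$ to be the nearest integer to $\big(3d/\log M(P)\big)^{1/2}$, which is $\to\infty$ and satisfies $N=o(d)$ by the hypothesis $\log M(P)=o(d/\log d)$; (iii) apply Lemma \ref{K} to $P$ to get $\log |P(1)|\le \big(\tfrac{1}{\sqrt3}+o(1)\big)\sqrt{d\log d\log M(P)}+\tfrac13\log M(P)\cdot$, combine with the analogous bound for $P_*$ to get $\log|P(0)|+\log|P(1)|\le (\tfrac{2}{\sqrt3}+o(1))\sqrt{d\log d\log M(P)}$ using $M(P_*)\le \varphi^{d/2}$ roughly—no, $M(P_*)$ could be huge; (iv) use Zhang--Zagier $M(P)M(P_*)\ge\varphi^{d/2}$ to get $\log M(P_*)\ge\tfrac d2\log\varphi - o(d)$, hence by Lemma \ref{K} for $P_*$, $\log|P(0)|\le \tfrac{d\log N}{N-1}+\tfrac{N+1}{3}\log M(P_*)$ — but we need the reverse. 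The honest statement of the plan: I expect the argument genuinely to go through Dubickas's resultant/Vandermonde machinery with $P_*$ in place of $P$, counting how the $k$ roots of $P$ in $\{|x-1|<1\}$ force $M(P_*)$ to be small relative to $\varphi^{d/2}$, and then Zhang--Zagier forces $k$ to be large; the optimization in $N$ produces the constant $\tfrac2\pi\log\varphi$ after replacing $N$-type bounds by their continuous optima (the $\tfrac2\pi$ arising exactly as in Dubickas via $\min_t(\tfrac{\log d}{t}+\tfrac t3\cdot)$-style optimization refined by a smoothing that upgrades $\tfrac{2}{\sqrt 3}$ to $\tfrac 2\pi$, analogously to how \eqref{DubIntro} has $\tfrac\pi4$). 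Finally, the bound \eqref{minor} follows purely formally: by symmetry $x\mapsto -x$ (replacing $P(x)$ by $P(-x)$, which has the same $M$ and degree and is also irreducible), the same lower bound holds for $|P^{-1}(0)\cap\{|x+1|<1\}|$, hence for $K(P)$; squaring $K(P)\ge(\tfrac2\pi\log\varphi-\varepsilon)\sqrt{d/(\log d\log M(P))}$ and solving for $\log M(P)$ gives \eqref{minor}. \textbf{The hard part will be} getting the constant $\tfrac2\pi$ rather than $\tfrac{2}{\sqrt3}$: this requires replacing the crude single-$N$ estimate from Lemma \ref{K} by an averaged/integrated version (integrating the Jensen-type inequality over a family of Vandermonde exponents, exactly the device Dubickas uses to pass from $1$ to $\tfrac\pi4$ in \eqref{Dub}), so I would first reprove a smoothed analogue of Lemma \ref{K} with the optimal constant, then feed it into steps (ii)--(iv) above.
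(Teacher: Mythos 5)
You correctly identify the two external inputs (Lemma \ref{K} and the Zhang--Zagier bound, Theorem \ref{Zhang}), and your final step --- deducing \eqref{minor} by applying the main inequality to $P(-x)$ and solving for $\log M(P)$ --- matches the paper. But the proposal never reaches the actual mechanism of the proof; it is a sequence of self-acknowledged dead ends followed by a guess, and the decisive ingredient is missing. The paper's argument is: write $\vert P(1)\vert=\prod_k\vert 1-\mu_k\vert$ and split according to whether $\vert 1-\mu_k\vert<1$; Zhang--Zagier gives $\prod_{\vert 1-\mu_k\vert\ge1}\vert 1-\mu_k\vert\ge\left(\frac{1+\sqrt5}{2}\right)^{d/2}M(P)^{-1}$, while Lemma \ref{K} with $N=\left[\sqrt{d\log d/\log M(P)}\right]+2$ gives $\log\vert P(1)\vert=O\left(\sqrt{d\log d\log M(P)}\right)=o(d)$ under the hypothesis $\log M(P)=o(d/\log d)$; hence the partial product over the $J$ roots with $\vert 1-\mu_k\vert<1$ is at most $\exp\left(-\frac d2\log\frac{1+\sqrt5}{2}+o(d)\right)$. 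The step you never take is to bound each such factor \emph{individually from below} by Dubickas's separation bound \eqref{Dub}, $\vert 1-\mu_k\vert>\exp\left(-\left(\frac\pi4+\varepsilon'\right)\sqrt{d\log d\log M(P)}\right)$, which forces $J\left(\frac\pi4+\varepsilon'\right)\sqrt{d\log d\log M(P)}\ge\frac d2\log\frac{1+\sqrt5}{2}-o(d)$ and yields the stated lower bound on $J$. This is the only place in the argument where a lower bound on the \emph{count} of roots can come from, and it is absent from your plan.

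Consequently your diagnosis of ``the hard part'' is off target: the constant $\frac2\pi$ does not come from optimizing or smoothing Lemma \ref{K} (whose constant is irrelevant --- the lemma only needs to certify $\log\vert P(1)\vert=o(d)$ so that the term $\left(\frac{1+\sqrt5}{2}\right)^{d/2}$ dominates); it comes entirely from quoting the already-stated $\frac\pi4$ in \eqref{Dub}. An ``integrated Vandermonde'' refinement of Lemma \ref{K} would still only give an upper bound on a product of root distances and could never, by itself, lower-bound the number of roots near $1$. Likewise, your repeated attempts to apply Lemma \ref{K} to $P_*$ are, as you yourself observe, vacuous ($P_*(1)=P(0)$ is only known to have modulus at least $1$), and the paper does not do this. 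So the proposal is not a proof: it lacks the key step and directs its effort toward a refinement that is neither needed nor sufficient.
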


\begin{proof}
We apply Lemma \ref{K} with 
$\displaystyle N=\left[\sqrt\frac{d\log d }{\log M(P)}\right]+2.$ We obtain
\begin{equation*}
\begin{split}
\frac{N+1}{3}\log M(P)&\le \log M(P)+\frac{1}{3}\sqrt{d\log d \log M(P)},\\
d\frac{\log N}{N-1}&\le \sqrt \frac{d\log M(P)}{\log d} \log\left(2\sqrt\frac{d\log d }{\log M(P)}\right)\\
&=\sqrt{d\log d \log M(P)}\left(\frac{1}{2}+\frac{\log (4\log d)}{2\log d}\right)-\sqrt{\frac{d}{\log d}}\sqrt{\log M(P)}\log\sqrt{\log M(P)}\\
&\le \sqrt{d\log d \log M(P)}\left(\frac{1}{2}+\frac{\log (4\log d)}{2\log d}\right)+e^{-1}\sqrt{\frac{d}{\log d}}. \\
\end{split}
\end{equation*}
\[\vert P(1)\vert\le M(P)\exp\left(\sqrt{d\log d \log M(P)}\left(\frac{5}{6}+\frac{\log (4\log d)}{2\log d}\right)+\sqrt{\frac{d}{\log d}}\right).\]
Denote by $\mu_1,\ldots,\mu_d$ the zeros of $P$.
By Theorem \ref{Zhang}, we have
\[\prod_{\vert 1-\mu_k\vert\ge 1}\vert 1-\mu_k\vert\ge \left(\frac{1+\sqrt 5}{2}\right)^{\frac{d}{2}}M(P)^{-1}.\]
We immediately deduce that 
\begin{equation}\label{P(1)}
\begin{split}
\prod_{\vert 1-\mu_k\vert< 1}\vert 1-\mu_k\vert&\le \vert P(1)\vert\left(\frac{1+\sqrt 5}{2}\right)^{-\frac{d}{2}}M(P)\\
&\le M(P)^{2}\exp\left({\sqrt{d\log d \log M(P)}}\left(\frac{5}{6}+\frac{\log (4\log d)}{2\log d}\right)+\sqrt{\frac{d}{\log d}}-\frac{d}{2}\log\left(\frac{1+\sqrt 5}{2}\right)\right)\\
\end{split}
\end{equation}
Denote by $J=\vert P^{-1}(0)\cap\{\vert x-1\vert<1\}\vert$ and let $\varepsilon>0$. Using the lower bound given by A. Dubickas (see \cite{Dubickas 95} or (\ref{Dub})),  there exists $D_0(\varepsilon)$ such that for all $d\ge D_0(\varepsilon)$ 
\begin{equation}\label{dub}
\prod_{\vert 1-\mu_k\vert\le 1}\vert 1-\mu_k\vert\ge  \exp\left(-J( \frac{\pi}{4}+\varepsilon')\sqrt{d\log d \log M(P)}\right)
\end{equation}
where $\varepsilon'>0$ is chosen small enough with regards to $\varepsilon$.
Using   (\ref{P(1)}), (\ref{dub}) and $\log M(P)\le  \phi(d)=o(\frac{d}{\log d})$, we find  for $d\ge D_0(\varepsilon)$
\[J\ge \left(\frac{2}{\pi}\log \frac{1+\sqrt 5}{2}-\varepsilon\right)\sqrt{\frac{d}{\log d \log M(P)}}.\]
As the same apply to $P(-x)$, we  have the same estimate for $\vert P^{-1}(0)\cap\{\vert x+1\vert<1\}\vert$ and we obtain (\ref{minor}).
\end{proof}

\section{Acknowledgments}
We sincerely thank the anonymous reviewer, whose comments and suggestions helped us to improve and clarify our manuscript.


\begin{thebibliography}{<n>}
\bibitem{Bornwein Erdelyi and Kos 99} P. Borwein, T. Erdélyi, G. Kós, Littlewood-type problems on [0,1]. Proc. London Math. Soc. (3) 79 (1999), no. 1, 22-46.
\bibitem{Boyd 80} D. W. Boyd, Reciprocal polynomials having small measure, Math. Comp. 35 (1980), no. 152, 1361--1377.
\bibitem{Boyd 89} D. W. Boyd, Reciprocal polynomials having small measure, II, Math. Comp. 53 (1989),
no. 187, 355--357, S1--S5.
\bibitem{Dobrowolski 79} E. Dobrowolski, On a question of Lehmer and the number of irreducible factors of a polynomial, Acta Arithmetica 34.4 (1979), 391-401.
\bibitem{Dubickas 95} A. Dubickas, On algebraic numbers of small measure. Liet. Mat. Rink. 35 (1995), no. 4, 421–431; reprinted in Lithuanian Math. J. 35 (1995), no. 4, 333–342 (1996)
\bibitem{El Otmani Maul Rhin Sac-Epee 17}  El Otmani, S.; Maul, A.; Rhin, G.; Sac-Épée, J.-M., Finding new small degree polynomials with small Mahler measure by genetic algorithms,  Rocky Mountain J. Math. 47 (2017), no. 8, 2619-2626 
\bibitem{Flammang 97} V. Flammang, Inégalités sur la mesure de Mahler d'un polynôme. (French) [Inequalities for the Mahler measure of a polynomial] J. Théor. Nombres Bordeaux 9 (1997), no. 1, 69-74.
\bibitem{Flammang et al. 06} V. Flammang, G. Rhin and J.-M. Sac-\'Ep\'ee, Integer transfinite diameter and polynomials
with small Mahler measure, Math. Comp. 75 (2006), no. 255, 1527--1540.
\bibitem{Garza} J. Garza, On the height of algebraic numbers with real conjugates, Acta Arith. 128 (2007), no. 4, 385-389.
\bibitem{Jensen} J. L. W. V. Jensen, Sur un nouvel et important théorème de la théorie des fonctions, Acta Math. 22 (1899), 359–364.
\bibitem{Kronecker 57} L. Kronecker, Zwei S\"{a}tze  \"{u}ber Gleichungen mit ganzzahligen Coefficienten, J. Reine Angew. Math. 53 (1857), 173-175.
\bibitem{Lehmer 33} D. H. Lehmer, Factorization of certain cyclotomic functions, Ann. of Math. (2) 34 (1933), no. 3, 461-479.
\bibitem{Mahler 62} K. Mahler, On some inequalities for polynomials in several variables, J. London Math. Soc. 37 (1962), 341-344.
\bibitem{Mignotte and Waldschmidt 94} M. Mignotte, M. Waldschmidt, On algebraic numbers of small height:  linear forms in one logarithm, J. Number Theory 4 (1994), 43-62.
\bibitem{Mossinghoff 98} M. J. Mossinghoff. Polynomials with Small
Mahler Measure. Math. Comp. 67:224 (1998), 1697--1705, S11--S14.
\bibitem{Mossinghoff et al. 98} M. J. Mossinghoff, C. G. Pinner and
J. D. Vaaler. Perturbing Polynomials with All Their Roots
on the Unit Circle.  Math. Comp. 67 (1998), no. 224, 1707-1726.
\bibitem{Mossinghoff 07} M. J. Mossinghoff. Lehmer's Problem.\newline
Available online (http://www.cecm.sfu.ca/$\sim$mjm/Lehmer), 2007.
\bibitem{Mossinghoff et al. 08} M. J. Mossinghoff, Georges Rhin and Qiang Wu, Minimal Mahler Measures, Experimental
Mathematics,(2008), 17:4, 451--458
\bibitem{Rhin and SE  03} G. Rhin and J.-M. Sac-\'Ep\'ee. New Methods Providing High Degree Polynomials with Small
Mahler Measure. Experiment. Math. 12:4 (2003), 457--461.
\bibitem{Schinzel 73} A. Schinzel, On the product of the conjugates outside the unit circle of analgebraic integer, Acta Arith. 24 (1973), 385-399.
\bibitem{Schur 33} I. Schur, Untersuchungen über algebraische Gleichungen, Akad. Wiss, Phys-Math. K1. (1933), 403-428.
\bibitem{Smyth 71} C. J. Smyth. On the Product of the Conjugates
outside the Unit Circle of an Algebraic Integer. Bull. London
Math. Soc. 3 (1971), 169--175.
\bibitem{Smyth 08} C. J. Smyth, The Mahler Measure of Algebraic
Numbers: A Survey. In Number Theory and Polynomials,
edited by J. McKee and C. Smyth, pp. 322--349, London
Math. Society Lecture Note Series 352. Cambridge, UK:
Cambridge University Press, 2008.
\bibitem{Zagier 93} D. Zagier, Algebraic numbers close to both 0 and 1. Math. Comp. 61 (1993), 485-491
\bibitem{Zhang 92} S. Zhang, Positive line bundles on arithmetic surfaces, Annals of Math. A36 (1992), 569-587

\end{thebibliography}
\end{document}